\newcommand\cyr{
 \renewcommand\rmdefault{wncyr}
 \renewcommand\sfdefault{wncyss}
 \renewcommand\encodingdefault{OT2}
\normalfont\selectfont} \DeclareTextFontCommand{\textcyr}{\cyr}
\newtheorem{theorem}{Theorem}
\newtheorem{lemma}[theorem]{Lemma}
\newtheorem{corollary}[theorem]{Corollary}
\newtheorem{proposition}[theorem]{Proposition}
\newtheorem{remark}[theorem]{Remark}
\newtheorem{conjecture}[theorem]{Conjecture}
\newtheorem{definition}[theorem]{Definition}
\def\Z{\mathbb Z}
\def\Q{\mathbb Q}
\def\R{\mathbb R}
\def\C{\mathbb C}
\def\F{\mathbb F}
\def\FF{\mathbb F}
\def\E{\mathbb E}
\def\G{\mathbb G}
\def\n{\noindent}
\def\Ker{\operatorname{Ker}}
\def\Im{\operatorname{Im}}
\def\det{\operatorname{det}}
\def\tr{\operatorname{tr}}
\def\End{\operatorname{End}}
\def\Gal{\operatorname{Gal}}
\def\Frob{\operatorname{Frob}}
\def\mod{\operatorname{mod}}
\def\dim{\operatorname{dim}}
\def\disc{\operatorname{disc}}
\def\car{\operatorname{char}}
\def\lcm{\operatorname{lcm}}
\def\GL{\operatorname{GL}}
\def\GSp{\operatorname{GSp}}
\def\USp{\operatorname{USp}}
\def\Sp{\operatorname{Sp}}
\def\USp{\operatorname{USp}}
\def\SL{\operatorname{SL}}
\def\li{\operatorname{li}}
\def\O{\operatorname{O}}
\def\o{\operatorname{o}}
\def\log{\operatorname{log}}
\def\ds{\displaystyle}
\def\Tr{\operatorname{Tr}}
\begin{document}

\title{
Arithmetic properties of the Frobenius traces defined by a rational abelian variety \\
{\footnotesize{with two appendices by J-P.~Serre}}}
\author[Alina Carmen Cojocaru]{Alina Carmen Cojocaru}
\address{Department of Mathematics, Statistics and Computer Science, University of Illinois at Chicago, 851 S Morgan St, 322
SEO, Chicago, IL, 60607, USA and 
Institute of Mathematics  ``Simion Stoilow'' of the Romanian Academy, 21 Calea Grivitei St, Bucharest, 010702,
Sector 1, Romania
}
\email{cojocaru@uic.edu}
\author[Rachel Davis]{Rachel Davis}
\address{ Department of Mathematics, Purdue University
150 N. University Street, West Lafayette, IN 47907, USA
}
\email{davis705@math.purdue.edu}
\author[Alice Silverberg]{Alice Silverberg}
\address{Department of Mathematics, University of California, Irvine, CA 92697-3875, USA}
\email{asilverb@uci.edu}
\author[Katherine E. Stange]{Katherine E. Stange}
\address{Department of Mathematics, University of Colorado, Boulder, Campus Box 395, Boulder, 80305, CO, USA}
\email{kstange@math.colorado.edu}

\thanks{
A.C. Cojocaru's  work on this material was partially supported by the National Science Foundation under agreement No. DMS-0747724,  by the European Research Council  under Starting Grant 258713, and by the Simons Collaborative Grant under Award No. 318454.
A.~Silverberg's work was partially supported by the National Science Foundation under agreement CNS-0831004. 
K.E.~Stange's work was partially supported by the National Science Foundation MSPRF 0802915, the Natural Sciences and Engineering Research Council of Canada PDF 373333, and sponsored by the National Security Agency under Grant H98230-14-1-0106.  The United States Government is authorized to reproduce and distribute reprints notwithstanding any copyright notation herein.
}

\begin{abstract}
Let $A$ be an abelian variety over $\Q$ of dimension $g$ such that the image of its  associated absolute Galois representation  $\rho_A$ is open in $\GSp_{2g}(\hat{\Z})$.
We investigate the arithmetic of the traces $a_{1, p}$ of the Frobenius at $p$ in $\Gal(\overline{\Q}/\Q)$ under $\rho_A$.
In particular, we obtain upper bounds for the counting function  $\#\{p \leq x: a_{1, p} = t\}$ and we prove 
an Erd{\"o}s-Kac type theorem for the number of prime factors of $a_{1, p}$. 
We also formulate a conjecture about the asymptotic behaviour of  $\#\{p \leq x: a_{1, p} = t\}$,
which generalizes a well-known conjecture of S.~Lang and H.~Trotter from 1976  about elliptic curves.
\end{abstract}

\maketitle

\section{Introduction}
\label{introsect}

Given an abelian variety $A/\Q$, its reductions $A_p/\F_p$ modulo primes  encode deep arithmetic global information.
A primary question related to these reductions concerns their $p$-Weil polynomials, in particular the coefficients of these 
polynomials. 

In the simplest case when $A$ has dimension 1, that is, when $A$ is an elliptic curve
over $\Q$,  
for each prime $p$ of good reduction the $p$-Weil polynomial is 
$
P_{A, p}(X) = X^2 - a_p X + p \in \Z[X],
$
where
$
a_p := p + 1 - |A_p(\F_p)|.
$
The coefficient $a_p$ satisfies the Weil bound
$|a_p| < 2 \sqrt{p}$
and is of major significance in number theory. For example, it appears as the $p$-th Fourier coefficient in the expansion of the weight 2 newform  associated to $A$.
The study of $a_p$ comes in several  flavours, some having led to well-known problems in arithmetic geometry, such as the Sato-Tate Conjecture from the 1960s (now a theorem) and  the Lang-Trotter Conjecture on Frobenius traces from the 1970s (still open).

Briefly, the Lang-Trotter Conjecture
\cite{LaTr} on the behaviour of $a_p$ predicts that for every elliptic curve $A/\Q$ and every integer $t \in \Z$,
if $\End_{\overline{\Q}}(A) \simeq \Z$ or 
$t \neq 0$, and if we write $N_A$ for the product of the primes of bad reduction for $A$, then 
either there are at most finitely many primes $p$ such that $a_{p} = t$, or 
  there exists a constant $c(A, t) > 0$ such that, as $x \rightarrow \infty$,
\begin{equation}\label{LT-dim1}
\pi_{A}(x, t) :=
\#
\left\{
p \leq x: p \nmid N_A, a_p = t
\right\}
\sim
c(A, t) \frac{\sqrt{x}}{\log x}.
\end{equation}
The constant $c(A, t)$ has a precise heuristic description  derived from  the Chebotarev Density Theorem,
combined with  the Sato-Tate Conjecture when  $\End_{\overline{\Q}}(A) \simeq \Z$  
and with  a prime distribution law arising from works of M. Deuring and E. Hecke  when  $\End_{\overline{\Q}}(A) \not\simeq \Z$.

While the Lang-Trotter Conjecture remains open, several remarkable related results have been proven. 
 When 
 $\End_{\overline{\Q}}(A) \not\simeq \Z$ (the CM case) and $t \neq 0$, upper bounds of the right order of magnitude can be proved using sieve methods. 
When $\End_{\overline{\Q}}(A) \simeq \Z$  and $t \neq 0$,
 weaker upper bounds, unconditional or conditional (upon the Generalized Riemann Hypothesis, GRH), can be proved using effective versions of the Chebotarev Density Theorem;
 such bounds   were first obtained by J-P.~Serre \cite[Th\'eor\`eme~20]{Se81}.
The currently best unconditional upper bound, $\pi_A(x, t) \ll_A \frac{x (\log \log x)^2}{(\log x)^2}$, was obtained by V.K. Murty \cite[Theorem~5.1]{Mu96} (see \cite{Wa} for an earlier result), while
 the currently best  upper bound under GRH,
$\pi_A(x, t) \ll_A \frac{x^{\frac{4}{5}}}{(\log x)^{\frac{1}{5}}}$,
was obtained by M.R. Murty, V.K. Murty $\&$ N.  Saradha  \cite[Theorem~4.2]{MuMuSa}
(for very recent improvements on the exponent of the $\log x$ factor, see \cite{Zy}).
 When $\End_{\overline{\Q}}(A) \simeq \Z$  and $t = 0$,
 stronger results are known; in particular, the unconditional bounds 
 $\frac{\log \log \log x}{(\log \log \log \log x)^{1 + \varepsilon}} \ll_{\varepsilon} \pi_A(x, 0) \ll x^{\frac{3}{4}}$  were obtained by 
  \'{E}. Fouvry $\&$ M.R. Murty  \cite[Theorem~1]{FoMu} and, respectively, by 
N.D. Elkies \cite{El} using, as a key tool, M. Deuring's characterization of supersingular primes \cite{De}.

Inspired by these works, the main goal of our paper is to investigate the arithmetic of the Frobenius traces of a generic higher dimensional abelian variety $A/\Q$; in particular: 

\noindent
(i) we will prove upper bounds for the generalization of the counting function $\pi_A(x, t)$ and  deduce results on the growth of the Frobenius traces; 

\noindent
(ii) we will determine  the normal order of the sequence defined by the  prime divisor function of the Frobenius traces, and,
 more generally, we  will prove an Erd{\"o}s-Kac type result for this sequence;
 
 \noindent
(iii)  under suitable hypotheses, we will formulate a generalization of (\ref{LT-dim1}).
 
\noindent
Our main results  mark  only the beginning of  such investigations in higher dimensions and we hope shall stimulate further research.

Our main setting and notation are as follows.
Let $A/\Q$ be a principally polarized abelian variety of dimension $g$.
Let $\overline{\Q}$ denote an algebraic closure of $\Q$ and let
$\End_{\overline{\Q}}(A)$ denote the endomorphism ring of $A$ over $\overline{\Q}$. 
 Let  $N_A$  be the product of primes of bad reduction for $A$. 

We denote by
$$
\rho_A : \Gal\left(\overline{\Q}/\Q\right) \longrightarrow \GSp_{2g}(\hat{\Z})
$$
the absolute Galois representation defined by the inverse limit of the 
representations
$$
\bar{\rho}_{A, m} : \Gal(\overline{\Q}/\Q) \longrightarrow \GSp_{2g}(\Z/m \Z)
$$
of $\Gal(\overline{\Q}/\Q)$ on the $m$-torsion $A[m] \subset A(\overline{\Q})$
for each integer $m \geq 1$. 
For each prime $\ell$ 
we denote by
$$
\rho_{A, \ell} : \Gal\left(\overline{\Q}/\Q\right) \longrightarrow \GSp_{2g}(\Z_{\ell})
$$
the $\ell$-adic representation,
i.e., the representation of $\Gal(\overline{\Q}/\Q)$ on the 
$\ell$-adic Tate module ${\displaystyle \lim_\leftarrow A[\ell^n}]$.

For each prime $p \nmid N_A$,   we consider the $p$-Weil polynomial
$
P_{A, p}(X)
$
  of $A$,
which is
uniquely determined by the property that 
\begin{equation}\label{p-Weil-char}
P_{A, p}(X) = \det \left(X I_{2 g} - \rho_{A, \ell} \left(   \Frob_p  \right)  \right) 
\end{equation}
for any prime $\ell \neq p$. In particular, we have
\begin{equation}\label{p-Weil-char-m}
P_{A, p}(X) \equiv \det \left(X I_{2 g} - \bar{\rho}_{A, m} \left(\Frob_p\right)  \right) (\mod m)
\end{equation}
for any integer $m$ coprime to  $p$. 
We write
$$
P_{A, p}(X) =
X^{2 g} + 
a_{1, p} X^{2 g - 1} + \ldots + a_{g, p} X^g 
+ 
p a_{g-1, p} X^{g-1} + \ldots + p^{g-1} a_{1, p} X + p^g  \in \Z[X],
$$
where the integers $a_{i, p}$, $1 \leq i \leq g-1$, are independent of $\ell$.

For any integer $t \in \Z$, we consider the function
\begin{equation*}
\pi_{A}(x, t) :=
\#
\left\{
p \leq x: p \nmid N_A, a_{1, p} =t
\right\}.
\end{equation*}

The reason we usually impose the restriction that our abelian varieties be
principally polarized is for ease of notation. When the abelian variety is
principally polarized, the image of the $\ell$-adic representation
$\rho_{A, \ell}$ lies in $\GSp_{2g}(\Z_{\ell})$. Without the restriction
on the polarization, the image lies in a group that can be defined by
replacing the matrix $J_{2g}$ of Section \ref{basicnotationsect} below with a 
matrix that has a more complicated description, and our results could be
modified accordingly; see, for example, Section 2.3 of \cite{Se86} for the 
group of symplectic similitudes in this general setting.

\begin{theorem}\label{main-thm1}
Let $A/\Q$ be a principally polarized abelian variety of dimension $g$ and let $t \in \Z$.
Assume that $\Im \rho_A$ is open in $\GSp_{2g}(\hat{\Z})$.
Define
$$
\alpha := \frac{1}{2 g^2 + g + 1}, 
\qquad
\beta := 
\left\{
\begin{array}{cc}
\frac{1}{3} & \text{if $g = 1$,}
\\
\frac{1}{2 g^2 - g + 3}& \text{if $g \geq 2$,}
\end{array}
\right.
\qquad
\gamma := 
\left\{
\begin{array}{ccc}
\frac{1}{2} & \text{if $g = 1$,}
\\
\frac{1}{8} & \text{if $g = 2$,}
\\
\frac{1}{2 g^2 - g + 1} \; & \text{if $g \geq 3$.}
\end{array}
\right.
$$
For any $\varepsilon > 0$ we have:
\begin{enumerate}
\item[(i1)]
unconditionally,
$$
\pi_A(x, t) \ll_{A,  \varepsilon}
 \frac{
 x
 }{
 (\log x)^{1 + \alpha - \varepsilon}
 };
$$
\item[(i2)]
under GRH,
$$
\pi_A(x, t) \ll_{A,  \varepsilon}
 x^{1 - \frac{\alpha}{2} + \varepsilon};
$$
\item[(ii)]
        if $t \neq \pm 2g$, 
then {\em (i1)} and {\em (i2)} hold with $\alpha$ replaced by $\beta$;
\item[(iii)]
if $t = 0$,
then  {\em (i1)} and {\em (i2)} hold with $\alpha$ replaced by $\gamma$.
\end{enumerate}
\end{theorem}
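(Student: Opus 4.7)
The plan is to combine three standard ingredients: (a) the congruence $a_{1,p} \equiv -\tr\bigl(\bar\rho_{A,\ell}(\Frob_p)\bigr) \pmod \ell$ coming from (\ref{p-Weil-char-m}); (b) a density estimate for the proportion of elements of $\GSp_{2g}(\F_\ell)$ whose trace equals a prescribed value; and (c) an effective Chebotarev Density Theorem (unconditional Lagarias--Odlyzko, respectively conditional on GRH) applied to the field $L_\ell := \Q(A[\ell])$. Optimizing the choice of $\ell$ as a function of $x$ then produces the bounds. This is the natural generalization to higher dimension of the strategy initiated by Serre \cite[Th\'eor\`eme~20]{Se81} for elliptic curves.

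To set up, note that the openness of $\Im \rho_A$ in $\GSp_{2g}(\hat\Z)$ yields $\Im \bar\rho_{A,\ell} = \GSp_{2g}(\F_\ell)$ for all primes $\ell$ outside a finite exceptional set depending on $A$. For each such $\ell$ and each $s \in \Z$, define
$$C_\ell(s) := \bigl\{g \in \GSp_{2g}(\F_\ell) : \tr(g) \equiv s \pmod \ell\bigr\},$$
a union of conjugacy classes. By (\ref{p-Weil-char-m}), the hypothesis $a_{1,p}=t$ forces $\bar\rho_{A,\ell}(\Frob_p) \in C_\ell(-t)$, so $\pi_A(x, t)$ is bounded above by the Chebotarev counting function for $C_\ell(-t)$ in $L_\ell/\Q$. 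A straightforward dimension count (the trace map $\GSp_{2g} \to \A^1$ is dominant and flat of relative dimension $2g^2+g$, while $\dim\GSp_{2g}=2g^2+g+1$) then gives $|C_\ell(s)|/|\GSp_{2g}(\F_\ell)| \ll 1/\ell$; the exponent $2g^2+g+1$ is precisely what produces $\alpha$.

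Applying effective Chebotarev to $L_\ell/\Q$, with $[L_\ell : \Q] \ll \ell^{2g^2+g+1}$ and $\log|d_{L_\ell}| \ll [L_\ell:\Q]\log(\ell N_A)$ (since $L_\ell/\Q$ is ramified only at primes dividing $\ell N_A$), I obtain under GRH a bound of the shape
$$\pi_A(x, t) \ll \frac{\pi(x)}{\ell} + \ell^{2g^2+g}\, x^{1/2}\log(\ell N_A x);$$
choosing $\ell \asymp x^{\alpha/2}$ balances the two terms and yields part (i2). Unconditionally, by the Lagarias--Odlyzko bound, taking $\ell$ of size $(\log x)^a$ for any $a<\alpha$ renders the Chebotarev error negligible against the main term $\pi(x)/\ell \asymp x/(\log x)^{1+a}$, and letting $a \to \alpha$ produces part (i1).

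For parts (ii) and (iii), the same framework applies with sharper density estimates on $|C_\ell(-t)|$. When $t \neq \pm 2g$, one excludes certain ``extremal'' (e.g.\ scalar-like) conjugacy classes of $\GSp_{2g}(\F_\ell)$ from $C_\ell(-t)$ and intersects with additional conditions on higher symmetric functions of the eigenvalues to cut the target further by a factor of $\ell^{-\delta}$, producing $\beta$ after the analogous optimization. For $t=0$, the vanishing of the coefficient of $X^{2g-1}$ in $P_{A,p}(X)\bmod\ell$ imposes an additional structural symmetry on the eigenvalue multiset, reducing the target further and yielding $\gamma$; the exceptional values for $g\in\{1,2\}$ reflect ad hoc analyses using the explicit structure of $\GSp_2 = \GL_2$ (where, for $t=0$, this specializes to Deuring's supersingular characterization and Elkies's argument) and $\GSp_4$. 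The main obstacle will be precisely this combinatorial bookkeeping in $\GSp_{2g}(\F_\ell)$: obtaining the refined density bounds in parts (ii) and (iii) uniformly in $\ell$ and handling the residual small conjugacy classes carefully---especially for small $g$---requires detailed information on the conjugacy class structure of the symplectic group.
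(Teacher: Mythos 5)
Your approach to parts (i1) and (i2) is sound and, in substance, equivalent to the paper's. The paper does not work with the mod-$\ell$ fields $\Q(A[\ell])$ for a variable prime $\ell$ chosen as a function of $x$; instead it fixes a single prime $\ell$, passes to the infinite $\ell$-adic extension $L = \overline{\Q}^{\Ker \rho_{A,\ell}}$, and invokes Serre's effective Chebotarev theorem for $\ell$-adic Lie extensions (\cite[Th\'eor\`eme~10]{Se81}), which produces the exponent $\alpha = (D-d)/D$ directly from the Minkowski dimension $d$ of the conjugacy-stable set $C_\ell(t)$ inside $\G_\ell = \GSp_{2g}(\Z_\ell)$ of dimension $D = 2g^2+g+1$. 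The two routes are essentially the same — Serre's $\ell$-adic theorem is itself proved by optimizing a finite level $\ell^n$ — so this part of your plan would go through.

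The difficulty is that your account of parts (ii) and (iii) identifies the wrong mechanism. You propose to ``cut the target further by a factor of $\ell^{-\delta}$'' by excluding scalar-like classes or imposing extra conditions on symmetric functions of the eigenvalues. This cannot work: removing the (at most $O(1)$ many) central elements from $C_\ell(t)$ does not change the order of $|C_\ell(t)|/|\GSp_{2g}(\F_\ell)| \asymp 1/\ell$, and imposing further conditions on higher symmetric functions would require adding hypotheses on the other coefficients $a_{i,p}$, which are not part of the counting function $\pi_A(x,t)$. The actual gain in the paper does not come from a sharper density estimate at all, but from Serre's refined Chebotarev theorem (\cite[Th\'eor\`eme~12]{Se81}), which improves the exponent from $\alpha = (D-d)/D$ to $\beta_C = (D-d)/(D - r_C/2)$, where $r_C := \inf_{M \in C}\dim(G/Z_G(M))$ is the minimal codimension of a centralizer of an element of $C$. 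This improvement is available precisely when $C$ avoids the center, which for $t\neq\pm 2g$ one arranges by choosing $\ell$ with $v_\ell(t/2g) \neq 0$, and for $t=0$ by passing to the projective quotient $P\G_\ell$ (whose center is trivial and whose dimension is $D-1$). Making this quantitative requires a nontrivial input that your sketch does not supply: a lower bound on $\dim(\G_\ell/Z_{\G_\ell}(M))$ for $M$ non-central in $\Sp_{2g}$, uniformly in $M$. The paper obtains $r_C \geq 4g-4$ in general, and $r_C \geq 4g-2$ for trace-zero elements when $g \geq 3$, from the Theorem in Serre's Appendix~\ref{SerreApp1}; substituting these into $\beta_C$ gives exactly the stated $\beta = 1/(2g^2-g+3)$ and $\gamma = 1/(2g^2-g+1)$ (with the $g=1,2$ cases handled separately). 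Without the centralizer-dimension theorem and the switch from ``density of $C$'' to ``codimension of centralizers in $C$,'' your optimization will only ever reproduce $\alpha$ in all three parts.
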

\noindent
Note that we will actually prove a more general result, stated as Theorem \ref{main-thm1'} in Section \ref{thmpfsect}, and that 
the case $g =1$  of Theorem \ref{main-thm1} is  \cite[Th\'eor\`eme~20, p.~189]{Se81}.

 An immediate consequence of Theorem \ref{main-thm1}
 concerns the non-lacunarity of the sequence $(a_{1, p})_p$:
 \begin{corollary}\label{cor-nonlac}
 We keep the setting and notation of Theorem \ref{main-thm1}. 
For any $\varepsilon > 0$ we have:
\begin{enumerate}
\item[(i)]
unconditionally,
$$
\#\left\{
p \leq x:  p \nmid N_A, |a_{1, p}| \geq (\log p)^{\alpha - \varepsilon}
\right\}
\sim
\pi(x);
$$
\item[(ii)]
under GRH,
$$
\#\left\{
p \leq x:  p \nmid N_A, |a_{1, p}| \geq p^{\frac{\alpha}{2} - \varepsilon}
\right\}
\sim
\pi(x).
$$
\end{enumerate}
 \end{corollary}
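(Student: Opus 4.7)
The plan is to derive both parts of Corollary \ref{cor-nonlac} from Theorem \ref{main-thm1} by a short union bound, using the Weil bound $|a_{1,p}|\leq 2g\sqrt{p}$ to restrict the range of $t$ to integer values. The key idea is that the set of primes for which $|a_{1,p}|$ falls below a prescribed threshold is covered by the finitely many level sets $\{a_{1,p}=t\}$, each of which is already controlled by Theorem \ref{main-thm1}.

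Concretely, for any non-decreasing threshold function $T\colon[2,\infty)\to[0,\infty)$, I would first record the elementary inequality
\[
\#\{p\leq x:p\nmid N_A,\ |a_{1,p}|<T(p)\}\leq\sum_{|t|<T(x)}\pi_A(x,t)\leq (2T(x)+1)\max_{|t|<T(x)}\pi_A(x,t),
\]
which uses only that $T(p)\leq T(x)$ for $p\leq x$. The strategy is then to match $T$ to each part of the corollary and to apply Theorem \ref{main-thm1} with parameter $\varepsilon/2$ in place of $\varepsilon$, leaving a bit of room in the exponent to absorb the extra factor of $T(x)$.

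For part (i), I would choose $T(x)=(\log x)^{\alpha-\varepsilon}$ (the statement is trivial if $\varepsilon\geq\alpha$) and invoke Theorem \ref{main-thm1}(i1), obtaining a bound of order $T(x)\cdot x/(\log x)^{1+\alpha-\varepsilon/2}=O(x/(\log x)^{1+\varepsilon/2})$, which is $o(\pi(x))$. For part (ii), I would choose $T(x)=x^{\alpha/2-\varepsilon}$ and invoke Theorem \ref{main-thm1}(i2), obtaining $T(x)\cdot x^{1-\alpha/2+\varepsilon/2}=O(x^{1-\varepsilon/2})=o(\pi(x))$. Subtracting these exceptional counts from $\pi(x)$, and absorbing the $O_A(1)$ primes dividing $N_A$ into the error, yields the desired asymptotic $\sim\pi(x)$ in each case.

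Since the deep input is entirely packaged into Theorem \ref{main-thm1}, there is no real obstacle in this argument; the only mild subtlety worth flagging is that the threshold in the corollary is expressed as a function of the individual prime $p$ rather than of $x$, and this is handled at once by the monotonicity of $T$.
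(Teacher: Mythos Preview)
Your proposal is correct and follows essentially the same approach as the paper: split off the exceptional set $\{p\le x:|a_{1,p}|<T(p)\}$, use monotonicity of $T$ to replace $T(p)$ by $T(x)$, cover by the level sets $\pi_A(x,t)$ for integers $|t|<T(x)$, and apply Theorem~\ref{main-thm1} with $\varepsilon/2$ so that the extra factor $T(x)$ is absorbed. The paper's proof is the same argument written out line by line.
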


Recall that $\nu(n)$ denotes the number of distinct prime factors of 
a positive integer $n$
and that an arithmetic function $f(\cdot)$ is said to have normal order $F(\cdot)$
if for all $\varepsilon > 0$, then
$
(1 - \varepsilon) F(n) < f(n) < (1 + \varepsilon) F(n)
$
for  all but a zero density subset of positive integers $n$.
It is a classical result  of P.~Erd\"{o}s,
originating in work of G.H.~Hardy and S.~Ramanujan \cite{HaRa}, that 
$\nu(p-1)$ has normal order $\log \log p$. More generally, P.~Erd\"{o}s and M.~Kac \cite{ErKa} proved that $\nu(p-1)$ has a normal distribution.
 Variations of these results have also been obtained in arithmetic geometric contexts,
  including that of modular forms  \cite{MuMu}. We now prove such results in the context of abelian varieties:

\begin{theorem}\label{main-thm2}
Let $A/\Q$ be a principally polarized abelian variety of dimension $g$.
Assume that $\Im \rho_A$ is open in $\GSp_{2g}(\hat{\Z})$.
Under GRH we have that, for any $\tau \in \R$,
\begin{equation}\label{EK}
\ds\lim_{x \rightarrow \infty}
\frac{
\#\left\{
p \leq x: p \nmid N_A, a_{1, p} \neq 0, \nu(a_{1, p}) \leq \log \log p + \tau \sqrt{\log \log p}
\right\}
}{
\pi(x)
}
=
\frac{1}{\sqrt{2 \pi}}
\ds\int_{-\infty}^{\tau} e^{- \frac{t^2}{2}} \; d t.
\end{equation}
In particular, $\nu(a_{1, p})$ has normal order $\log \log p$.
\end{theorem}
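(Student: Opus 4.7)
My plan is to follow the classical Erd\"{o}s--Kac method of moments, as refined in arithmetic-geometric settings by M.R.~Murty and V.K.~Murty \cite{MuMu} for Fourier coefficients of modular forms. Since Theorem \ref{main-thm1}(iii) shows that the primes with $a_{1, p} = 0$ form a set of density zero, removing them does not affect the asymptotic in \eqref{EK}. Decomposing $\nu(a_{1, p}) = \sum_{\ell \leq y} \omega_{\ell}(p) + \sum_{\ell > y} \omega_{\ell}(p)$, where $\omega_{\ell}(p) = 1$ if $\ell \mid a_{1, p}$ and $0$ otherwise, the Weil bound $|a_{1, p}| \leq 2 g \sqrt{p}$ implies that for $y$ a small positive power of $x$ the tail sum contributes at most $O(\log p / \log y) = o(\sqrt{\log \log p})$; hence it suffices to prove asymptotic normality of the truncated sum $\sum_{\ell \leq y} \omega_{\ell}(p)$ with mean and variance $\log \log y \sim \log \log p$.

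The key analytic input is an asymptotic for
$$
\pi_A(x; m, 0) := \#\left\{ p \leq x : p \nmid N_A, \; m \mid a_{1, p} \right\}
$$
for squarefree $m$ composed of primes $\ell \leq y$. By \eqref{p-Weil-char-m}, the condition $m \mid a_{1, p}$ is the conjugation-invariant condition $\tr \bar{\rho}_{A, m}(\Frob_p) \equiv 0 \pmod{m}$. Since $\Im \rho_A$ is open in $\GSp_{2 g}(\hat{\Z})$, for all but finitely many primes $\ell$ one has $\Im \bar{\rho}_{A, \ell} = \GSp_{2 g}(\F_\ell)$; a Goursat-style argument, using the near-simplicity and pairwise non-isomorphism of these groups for distinct large $\ell$, forces $\Im \bar{\rho}_{A, m} = \prod_{\ell \mid m} \GSp_{2 g}(\F_\ell)$ whenever $m$ is squarefree and supported on sufficiently large primes. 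An orbit count shows that the proportion of trace-zero elements in $\GSp_{2 g}(\F_\ell)$ is $1/\ell + O(1/\ell^2)$. The effective GRH Chebotarev density theorem, applied to the fixed field of $\ker \bar{\rho}_{A, m}$, then yields
$$
\pi_A(x; m, 0) = \delta(m) \, \pi(x) + O\!\left( m^{2 g^2 + g + 1} \, x^{1/2} \log(m x) \right),
$$
with $\delta(m) = \prod_{\ell \mid m} (1/\ell + O(1/\ell^2))$.

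Given this asymptotic, the $k$-th moment of $\sum_{\ell \leq y} \omega_{\ell}(p)$ over primes $p \leq x$ matches, up to a Chebotarev error, the $k$-th moment of a sum of independent Bernoulli variables with parameters $\delta(\ell)$, $\ell \leq y$. The total error contributed by summing the Chebotarev remainder over all squarefree $m$ with at most $k$ prime factors, all at most $y$, is acceptable provided $y = x^{\eta_k}$ with $\eta_k > 0$ sufficiently small depending on $g$ and $k$; for such $y$ both the expectation and the variance become $\log \log y + O(1) \sim \log \log p$, and the centred $k$-th moments converge to those of the standard normal. The Fr\'echet--Shohat theorem then upgrades moment convergence to the distributional convergence claimed in \eqref{EK}, and the normal-order statement follows from the variance bound via Chebyshev's inequality.

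The principal obstacle is carrying through these estimates uniformly in $m$ and $k$ while handling the finite set of ``bad'' primes where $\Im \bar{\rho}_{A, \ell}$ is a proper subgroup of $\GSp_{2 g}(\F_\ell)$: at these primes $\delta(\ell)$ must be computed directly and shown to perturb the multiplicative structure of $\delta(m)$ by only a bounded factor. A secondary technical point is the precise count of trace-zero elements in $\GSp_{2 g}(\F_\ell)$, for which the main term $1/\ell$ and error $O(1/\ell^2)$ must be extracted uniformly in $\ell$ via a character-sum identity on the group of symplectic similitudes; this ``vertical'' averaging replaces the Sato--Tate input that would appear in the analogous argument for fixed primes and varying $p$.
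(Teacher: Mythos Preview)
Your proposal is correct and follows essentially the same route as the paper: truncate $\nu$ at $y=x^{\delta}$ via the Weil bound, use the GRH-effective Chebotarev theorem in the division fields $\Q(A[m])$ to compute the moments of the truncated prime-factor count, compare with a Bernoulli model, and conclude by the method of moments. The paper's proof is organized around the same Proposition on $k$-th moments, with the same choice $\delta<\frac{1}{2k(2g^2+g+1)}$ and the same appeal to Theorem \ref{main-thm1} to dispose of the primes with $a_{1,p}=0$.

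One simplification worth noting: you invoke a Goursat-style argument to obtain $\Im\bar\rho_{A,m}=\prod_{\ell\mid m}\GSp_{2g}(\F_\ell)$ for squarefree $m$ supported on large primes, but under the hypothesis that $\Im\rho_A$ is open in $\GSp_{2g}(\hat\Z)$ this product decomposition is immediate from the existence of the integer $m_A$ (Lemma \ref{open-torsion-cond}(ii)), so no group-theoretic work is needed. Likewise, the ``orbit count'' giving the trace-zero proportion $1/\ell+O(1/\ell^2)$ is not quite as routine as your sketch suggests; the paper establishes it (as \eqref{quotient-first}--\eqref{quotient-second}) via the Achter--Holden bound and, for $g\ge 2$, Kim's explicit Kloosterman-sum formulae, rather than a direct character-sum identity.
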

\noindent
The case $g = 1$ not only recovers, but also generalizes the main theorem of \cite{MuMu} for weight 2 newforms that are not of CM type.

Finally, in Conjecture \ref{LT-dimg} below
we propose a generalization of (\ref{LT-dim1}) to the case of higher dimensional 
abelian varieties 
for which
 $\Im \rho_A$ is open in $\GSp_{2g}(\hat{\Z})$ and for which the following holds:
 
 \medskip
 
\noindent
{\bf Equidistribution Assumption}:
the normalized traces $\frac{ a_{1, p}}{\sqrt{p}}$
are equidistributed on $[-2g, 2g]$ with respect to the projection by the trace map
of the (normalized) Haar measure of  the unitary symplectic group $\USp(2g)$.

\medskip

The assumption that $\Im \rho_A$ is open in $\GSp_{2g}(\hat{\Z})$ gives rise to 
an integer $m_A \geq 1$ 
that is the smallest positive integer $m$ such that
\begin{equation*}\label{m_A}
\rho_A (\Gal(\overline{\Q}/\Q)) = \Pi^{-1}( \Im \bar{\rho}_{A, m}),
\end{equation*}
with 
$
\Pi : \GSp_{2 g}(\hat{\Z}) \longrightarrow \GSp_{2 g}(\Z/m \Z)
$
the natural projection.

The Equidistribution Assumption 
gives rise to a continuous function 
$
\Phi : [-1, 1] \longrightarrow [0, \infty),
$
nonzero at $0$, with the property that  for every interval $I \subseteq [-1, 1]$ we have
\begin{equation*}
\lim_{x \rightarrow \infty}
\frac{
\#\left\{p \leq x: p \nmid N_A, \frac{a_{1, p}}{2 g \sqrt{p}} \in I\right\}
}{
\pi(x)
}
=
\ds\int_I \Phi(t) \; d t.
\end{equation*}
We propose:

\begin{conjecture}\label{LT-dimg}
Let $A/\Q$ be a principally polarized abelian variety of dimension $g$
and
 let $t \in \Z$, $t \neq 0$.
Assume that  $\Im \rho_A$ is open in $\GSp_{2g}(\hat{\Z})$ and that
the Equidistribution Assumption holds.
Then,
as $x \rightarrow \infty$, 
$$
\pi_A(x, t)
\sim
c(A, t)
\frac{\sqrt{x}}{\log x},
$$
where
$$
c(A, t)
:=
\frac{\Phi(0)}{g}
\cdot
\frac{
m_{A, t}
|C(m_{A, t}, t)|
}{|\Im \bar{\rho}_{A, m_{A, t}} |}
\cdot
\ds\prod_{\ell \nmid m_A}
\frac{
\ell^{v_\ell(t)+1}
\;
|
\{M \in \GSp_{2g}(\Z/\ell^{v_\ell(t)+1} \Z) : \tr M \equiv t (\mod \ell^{v_\ell(t)+1})\}
|
}{ |\GSp_{2g}(\Z/\ell^{v_\ell(t)+1} \Z)| },
$$
the integers $v_{\ell}(t) \geq 0$ are defined by $\ell^{v_{\ell}(t)} | t$,  $ \ell^{v_{\ell}(t) + 1} \nmid t$,
and
$$
m_{A, t} :=
 m_A
 \ds\prod_{\ell | m_A} \ell^{v_{\ell}(t)},
$$
$$
C(m_{A, t}, t)
:=
\left\{
M \in \Im \bar{\rho}_{A, m_{A, t}}:
\tr M \equiv t (\mod m_{A, t})
\right\}.
$$
If $c(A, t) = 0$, we interpret the asymptotic as saying that there 
are at most finitely many primes $p$ such that $a_{1, p} = t$.
\end{conjecture}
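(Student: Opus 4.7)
The conjecture is a probabilistic heuristic rather than a provable statement -- it generalises the Lang-Trotter conjecture, still open in dimension one -- so my aim is to derive the precise shape of $c(A,t)$ by treating the Chebotarev congruence information and the Sato-Tate-type size information as asymptotically independent and multiplying the corresponding local densities. This is the philosophy that underlies the classical dimension-one heuristic, and I would apply it here mutatis mutandis.

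On the congruence side, the Chebotarev density theorem applied to $\bar{\rho}_{A,m}$ gives, for each modulus $m$ coprime to $N_A$,
$$
\lim_{x\to\infty}\frac{\#\{p \leq x : p \nmid N_A m,\; a_{1,p} \equiv t \pmod{m}\}}{\pi(x)}
= \frac{|\{M \in \Im \bar{\rho}_{A,m} : \tr M \equiv t \pmod{m}\}|}{|\Im \bar{\rho}_{A,m}|}.
$$
Using the openness hypothesis on $\Im\rho_A$, the image at $m = m_A \prod_{\ell \nmid m_A} \ell^{v_\ell(t)+1}$ decomposes via CRT into $\Im \bar{\rho}_{A, m_{A,t}}$ together with a direct product of full symplectic similitude groups at primes $\ell \nmid m_A$. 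The exponent $v_\ell(t)+1$ is the smallest power of $\ell$ that distinguishes $t$ from the $\ell$-adic integers near it, and the multiplication of each local factor by $\ell^{v_\ell(t)+1}$ (and of the global factor by $m_{A,t}$) renormalises it into a genuine local density of matrices whose trace equals $t$ in $\Z_\ell$. The auxiliary integer $m_{A,t}$ is designed precisely to absorb the bad primes together with the $\ell$-adic information of $t$ at those primes.

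On the archimedean side, the Weil bound confines $a_{1,p}$ to $[-2g\sqrt{p}, 2g\sqrt{p}]$, and the Equidistribution Assumption assigns to $a_{1,p}$ near $0$ an asymptotic density of $\Phi(0)/(2g\sqrt{p})$ per integer value. Summing this heuristic probability over $p \leq x$ via the standard estimate $\sum_{p \leq x} p^{-1/2} \sim 2\sqrt{x}/\log x$ produces $(\Phi(0)/g) \cdot \sqrt{x}/\log x$, which matches the archimedean prefactor in the conjecture. Combining the Chebotarev density (in the limit as $m$ captures all primes simultaneously) with this archimedean density under the independence assumption then yields the stated asymptotic constant.

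The main obstacle is precisely this independence: one must believe that the congruence class of $a_{1,p}$ modulo $m$ and the normalised size $a_{1,p}/\sqrt{p}$ are asymptotically uncorrelated, and this must hold uniformly as $m$ is allowed to grow with $x$. Making this rigorous amounts to a joint effective equidistribution statement strictly stronger than anything known about Chebotarev or Sato-Tate individually, which is why even the $g=1$ case of the conjecture remains open. In practice I would expect any genuine progress to take the form of upper bounds of the predicted order of magnitude, in the spirit of Theorem \ref{main-thm1}, rather than matching lower bounds.
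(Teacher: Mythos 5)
Your proposal tracks the same heuristic philosophy the paper uses in Section 5: split the problem into an $\ell$-adic (Chebotarev) density and an archimedean (Sato--Tate) density, posit their asymptotic independence, and multiply. The paper packages this slightly more cleanly by defining, for each $p$ and $m$, a genuine probability distribution
\[
f_p^{(m)}(\tau) = \Phi\!\left(\tfrac{\tau}{2g\sqrt p}\right)\cdot \frac{m|C(m,\tau)|}{|G(m)|}\cdot c_{p,m}
\]
on integers $\tau$, with $c_{p,m}$ a normalizing constant; the limit $\lim_{p\to\infty}2g\sqrt p\,c_{p,m}=1$ (Lemma~\ref{cp-g}) together with $\sum_{p\le x}\frac{1}{2\sqrt p}\sim\frac{\sqrt x}{\log x}$ then produces the prefactor $\Phi(0)/g$ exactly as in your archimedean computation. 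So on that side you and the paper agree.

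The gap in your argument is that you write down the product over $\ell\nmid m_A$ with exponents $\ell^{v_\ell(t)+1}$ and the factor $\frac{m_{A,t}|C(m_{A,t},t)|}{|G(m_{A,t})|}$ as if they were the obvious CRT bookkeeping, but they are not: the nontrivial content is that the sequence
\[
F_t(m) := \frac{m\,|C(m,t)|}{|G(m)|}
\]
\emph{stabilizes} as $m\,\widetilde\to\,\infty$ and that its limit factors precisely as
\[
F_t\!\left(m_A\textstyle\prod_{\ell\mid m_A}\ell^{v_\ell(t)}\right)\cdot\prod_{\ell\nmid m_A}H_t\!\left(\ell^{v_\ell(t)+1}\right),
\]
which is the content of Lemma~\ref{open-torsion-cond}(iii)--(iv). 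The proof there is not automatic: one must show (a) each local Euler factor $H_t(\ell)$ is $1+\O(\ell^{-2})$ so the product converges --- this relies on Kim's exponential-sum formula for $|C(\ell,t)|$ and a separate argument when $t=0$ --- and (b) that $F_t(m\ell^k)$ is constant for $k\ge v_\ell(t)$; point (b) comes from a bijection $C(m\ell^{v_\ell(t)+1},t)\to C(m\ell^{v_\ell(t)+1},s)$ given by multiplication by a scalar matrix $uI_{2g}$, which is where the openness of $\Im\rho_A$ (and hence the existence of $m_A$) enters concretely. You assert that $\ell^{v_\ell(t)+1}$ ``is the smallest power of $\ell$ that distinguishes $t$'' and that $m_{A,t}$ ``absorbs the bad primes'', which is the right intuition, but without the stabilization lemma you cannot justify that replacing $\ell^{v_\ell(t)+1}$ by $\ell^{v_\ell(t)+2}$ (say) would not change the answer, nor that the infinite product even converges. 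There is also a small error in your CRT step: the modulus at which the decomposition into $G(m_{A,t})\times\prod\GSp_{2g}(\Z/\ell^{v_\ell(t)+1}\Z)$ occurs is $m_{A,t}\prod_{\ell\nmid m_A}\ell^{v_\ell(t)+1}$, not $m_A\prod_{\ell\nmid m_A}\ell^{v_\ell(t)+1}$ --- you need the higher powers at the primes dividing $m_A$ as well, which is precisely why the quantity $m_{A,t}$, and not $m_A$, appears in the constant.
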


For a discussion about the possible growth of $\pi_A(x, 0)$, see  Section 5.

\begin{remark}\label{open-remark}
{\emph{
The image of $\rho_A$ is open in $\GSp_{2g}(\hat{\Z})$
for a large class of abelian varieties.
Indeed, in 
\cite{Se86,Se86bis}
Serre showed that this holds 
whenever
$\End_{\overline{\Q}}(A) \simeq \Z$  and the dimension $g$ of $A$ is $1$, $2$, $6$, or an odd number.
An open image result also holds 
when $\End_{\overline{\Q}}(A) \simeq \Z$
and there exists a number field $K$ such that the N\'{e}ron model of $A/K$ over 
the ring of integers  of $K$ has a semistable fibre of toric dimension 1; 
see \cite{Ha}.
As pointed out in \cite[p.~704]{Ha},
for $g \geq 2$ the image of
$\rho_A$ is open in $\GSp_{2g}(\hat{\Z})$ for most abelian $g$-folds
that arise as Jacobians of hyperelliptic curves  defined by
$y^2 = f(x)$ with the degree $n$ of the monic polynomial  $f \in \Z[x]$ equal to $2 g + 1$ or $2 g + 2$.
Specifically, 
the hypotheses in Hall's Theorem are satisfied
if the Galois group of $f$ is $S_n$, 
or
if there exists a rational prime $p$ for which $f (\mod p)$ has $n-1$ distinct zeroes over an algebraic closure, one of which is a double zero; see E.~Kowalski's appendix to \cite{Ha} and Yu.~Zarhin's paper \cite{Za}.
}}
\end{remark}

\begin{remark}
{\emph{
When  $\Im \rho_A$ is open in $\GSp_{2g}(\hat{\Z})$,
the Equidistribution Assumption 
is a very special case of a general conjecture explained
in \S 13 of \cite[Conjecture~13.5]{Se94} that generalizes the Sato-Tate Conjecture.  See also \cite[pp.~173--174, 797--804, 906]{CoSe}  and \cite{Se66}.}}
\end{remark}

\begin{remark}
{\emph{
                Generalizations of the Lang-Trotter Conjecture (\ref{LT-dim1})
have been previously considered by other authors.
For example, in \cite{Mu99}, V.K. Murty addressed generalizations in the setting of modular forms,
while in \cite[pp.~421--423]{Ka09},  N. Katz  addressed  generalizations
in the setting of abelian varieties arising as Jacobians of genus $g$ curves.
 Our conjecture encompasses a generic class of abelian varieties $A$ and is  precise in terms of  both  the growth in $x$ and the constant depending  on $A$ and $t$. The potential vanishing of the constant $c(A, t)$  is an important open problem in itself.
 In \cite[p.~420]{Ka09}, for instance, Katz discusses a general mechanism that leads to congruence obstructions for realizing $a_{1, p} = t$.  
 We relegate this study to future work.
}}
\end{remark}

The paper is structured as follows.
In Section 2 we present some of the key results needed for proving Theorem \ref{main-thm1}, Corollary \ref{cor-nonlac}, and Theorem \ref{main-thm2}, and  for arguing towards Conjecture \ref{LT-dimg}. 
In Section 3 we prove Theorem \ref{main-thm1} and Corollary \ref{cor-nonlac} using the strategy of 
\cite[Sections 7-8]{Se81}  and  also with the help of the main result of 
Serre's Appendix \ref{SerreApp1} of this paper. 
In Section \ref{thmpfsect} we prove Theorem \ref{main-thm2} following a general strategy of \cite{Bi74}.
In Section 5 we provide our heuristic reasoning towards Conjecture \ref{LT-dimg} and  address some connections with existing works.
In Section 6 we provide computational data related to our theoretical investigations.  
J-P.~Serre supplied two appendices: the first gives a result on the dimension of conjugacy classes in symplectic groups, while the second gives properties of a certain density function for unitary symplectic groups.

\subsection*{Acknowledgments}
The authors thank  
Jeff Achter,
Alina Bucur, 
Hao Chen,
Francesc Fit\'e,
Nathan Jones,
Kiran Kedlaya, 
Emmanuel Kowalski, 
Antonella Perucca, 
Karl Rubin,
Jean-Pierre Serre,
Drew Sutherland, 
Cassie Williams
 and 
Jonathan Wise for valuable discussions related to this paper.  They are grateful for the hospitality  of
the Banff International Research Station, Alberta, Canada, 
and  the support of the organizers Chantal David, Matilde Lal\'in, and Michelle Manes of the conference
\emph{Women in Numbers 2} (2011), where this work was initiated. 
The last three authors thank the first for her leadership and hard work on this project.

\section{Generalities}

\subsection{Basic notation}
\label{basicnotationsect}

Along with the standard analytic notation 
$\O, \; \ll, \; \gg, \; \o, \; \sim, $  
$$\pi(x) := \#\{p \leq x: p \; \text{prime}\},$$  
$$\li x := \ds\int_{2}^x \frac{1}{\log t} \; d t,$$
we use
$p$ and $\ell$ 
to denote  rational primes;
we write $n | m^{\infty}$ to mean that all  the prime divisors of  $n$ occur among the prime divisors of $m$, possibly with higher multiplicities;
we write $n || m$ to mean that $n | m$, but $n \nmid m$;
 we write $v_{\ell}(n)$ for the valuation of $n$ at $\ell$.

For   a  commutative, unitary ring $R$ and a positive integer $g$,
we denote 
by $R^{\times}$ its group of units,
by  
$I_{g} \in M_{g}(R)$,
$I_{2g} \in M_{2g}(R)$ 
the identity matrices,
and by
$$
J_{2g}  :=   \begin{pmatrix} 0 & I_{g} \\  -I_g & 0 \end{pmatrix} \in M_{2 g}(R).
$$
We recall that the {\it{general symplectic group on $R$}} is defined by
$$
\GSp_{2g}(R) := 
\left\{ M \in \GL_{2g}(R) : M^t J_{2g} M = \mu J_{2g} \; \text{ for some} \;  \mu \in R^{\times} \right\},
$$
where $M^t$ denotes the transpose of $M$, while
$$
\Sp_{2g}(R) := 
\left\{ M \in \GL_{2g}(R) : M^t J_{2g} M = J_{2g} \right\}.
$$
We note that $\GSp_{2}(R) = \GL_2(R)$.
We recall that 
$\GSp_{2g}(R)$ 
has center  $\{\mu  I_{2g} :  \mu \in R^{\times} \}$ and that, as an algebraic group, it  has dimension $2g^2 + g+1$.

For $R = \C$, we recall that the {\it{unitary symplectic group}} is defined by
$$
\USp(2g) := 
\left\{
M \in \Sp_{2 g}(\C) : {\overline{M}}^t M = M {\overline{M}}^t = I_{2 g} 
\right\}.
$$

\subsection{The Chebotarev Density Theorem}

\subsubsection{Finite extensions of a number field}

Let $L/K$ be a finite Galois extension of number fields and let $G$ be its Galois group. Let $C$
be a non-empty subset of $G$ that is stable under conjugation. For any $x > 0$, let
$$
\pi_{C}(x, L/K) :=
\#
\{
\mathfrak{p} \; \text{a place of $K$, unramified in $L/K$}:
N_{K/\Q}(\mathfrak{p}) \leq x,
\Frob_{\mathfrak{p}} \subseteq C
\}.
$$
The Chebotarev Density Theorem states that
$$
\pi_{C}(x, L/K)
\sim \frac{|C|}{|G|} \pi(x).
$$
We will use the following conditional effective version of this theorem:

 \begin{theorem}(\cite{LaOd}; for this version see \cite[Th\'eor\`eme~4, p.~133]{Se81})\label{cheb2} 
 Keep the above setting and  notation.
 Assume GRH for the Dedekind zeta function of $L$. 
    Then there exists an absolute  constant
 $c > 0$ such that
 \begin{eqnarray*}
\left|
\pi_C(x, L/K) - \frac{|C|}{|G|} \pi(x)
\right|
\leq
c \frac{|C|}{|G|} x^{\frac{1}{2}}
\left(
\log |\disc (L/\Q)|
+
|L : \Q| \log x
\right).
\end{eqnarray*}
 \end{theorem}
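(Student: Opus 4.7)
The plan is to reduce the problem to estimates for individual Artin $L$-functions and then apply the explicit formula under GRH. First I would expand the class-stable indicator $1_C$ in the orthonormal basis of irreducible characters of $G$, writing $1_C = \sum_\chi a_\chi \chi$ with $a_\chi = \frac{1}{|G|}\sum_{g\in C}\overline{\chi(g)}$. The weighted counting function $\psi_C(x) := \sum_{N\mathfrak{p}^k \leq x, \Frob_\mathfrak{p}^k \subseteq C} \log N\mathfrak{p}$ then decomposes as a linear combination of the von Mangoldt-type sums $\psi(x,\chi) := \sum_{N\mathfrak{p}^k \leq x} \chi(\Frob_\mathfrak{p}^k) \log N\mathfrak{p}$ attached to the Artin $L$-function $L(s,\chi,L/K)$. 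The trivial character $\chi = 1$ gives the main term $\frac{|C|}{|G|} x$, while every nontrivial $\chi$ must be shown to contribute an error of the claimed order.

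Next, I would apply the explicit formula to each $L(s,\chi,L/K)$. Because general Artin $L$-functions are not unconditionally holomorphic, I would first invoke Brauer induction to write $\chi = \sum_i n_i \Ind_{H_i}^G \psi_i$ with one-dimensional $\psi_i$ on subgroups $H_i$, so that $L(s,\chi,L/K) = \prod_i L(s,\psi_i,L/L^{H_i})^{n_i}$ is a product of Hecke $L$-functions, holomorphic away from $s=1$ and satisfying GRH by hypothesis. The standard explicit formula then reads
\[ \psi(x,\chi) = \delta_\chi \, x - \sum_{\rho} \frac{x^\rho}{\rho} + O\!\left(\log(\mathfrak{q}(\chi)\, x)\right), \]
where $\rho$ ranges over nontrivial zeros of $L(s,\chi,L/K)$ and $\delta_\chi$ is $1$ for $\chi = 1$ and $0$ otherwise. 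Under GRH every such $\rho$ satisfies $\Re\rho = 1/2$; truncating the sum at height $T$ and optimizing $T$ produces a bound of the shape
\[ |\psi(x,\chi) - \delta_\chi x| \ll x^{1/2}\bigl(\log \mathfrak{f}(\chi) + \chi(1)\log(x[L:\Q])\bigr). \]

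Finally, I would aggregate these bounds over $\chi$, using the conductor-discriminant identity $\log|\disc(L/\Q)| = \sum_\chi \chi(1)\log \mathfrak{f}(\chi)$ together with $\sum_\chi |a_\chi|\chi(1) \ll \frac{|C|}{|G|}\cdot|G|^{1/2}$-type bounds (refined by grouping characters according to the Plancherel identity $\sum_\chi |a_\chi|^2 = |C|/|G|$) to package the pieces into the single quantity $\log|\disc(L/\Q)| + [L:\Q]\log x$ appearing in the statement. A standard partial summation then transfers the bound from $\psi_C$ to $\pi_C(x,L/K)$. The principal obstacle is \emph{not} the explicit formula itself but the simultaneous, uniform control of the number of zeros of each $L(s,\chi,L/K)$ up to height $T$: one needs a Riemann--von Mangoldt-type estimate of the form $N_\chi(T) \ll \chi(1)\bigl(\log \mathfrak{f}(\chi) + [L:\Q]\log T\bigr)$ with precisely the right dependence on the character's conductor and degree, so that the aggregated error aligns with the conductor-discriminant formula. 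Tracking these dependencies carefully, rather than any single deep ingredient, is the heart of the Lagarias--Odlyzko argument adapted here from \cite{LaOd}.
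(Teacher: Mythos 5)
This theorem is not proved in the paper --- it is quoted verbatim from Lagarias--Odlyzko via Serre's Th\'eor\`eme 4 --- so the relevant comparison is with the Lagarias--Odlyzko/Serre argument. Your sketch has the right skeleton (Fourier expansion of the class indicator, explicit formula for $\psi(x,\chi)$ under GRH, Riemann--von Mangoldt zero count, conductor--discriminant aggregation, partial summation), but it departs from the cited proof at the step you correctly identify as the crux: handling the non-holomorphy of Artin $L$-functions. You reach for Brauer induction, writing $L(s,\chi,L/K)$ as a ratio of Hecke $L$-functions with (possibly negative and large) integer multiplicities $n_i$. Lagarias--Odlyzko and Serre instead use the Deuring reduction: fix $g\in C$, set $H=\langle g\rangle$, $E=L^H$, and express $\pi_C(x,L/K)$ in terms of counting primes of $E$ with Frobenius exactly $g$ in the \emph{cyclic} extension $L/E$. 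Then only one-dimensional (Hecke) characters appear, so holomorphy and GRH for $\zeta_L$ apply directly, there are no negative multiplicities to track, the conductor--discriminant identity aggregates cleanly to $\log|\disc(L/\Q)|$, and the factor $|C|/|G|$ falls out from the index bookkeeping $[L:E]=|H|$ and $|C|\cdot|H|$ versus $|G|$. Your Brauer-induction route is workable in principle (each Hecke factor divides $\zeta_L$, so its zeros obey GRH and the explicit formula applies to the ratio), but it is messier: you would need to control the $n_i$ and the conductors of the inducing subfields uniformly, which Lagarias--Odlyzko deliberately avoid. One small slip in the aggregation: for $C$ a single class, $a_\chi=\frac{|C|}{|G|}\overline{\chi(g)}$, so $|a_\chi|\le\frac{|C|}{|G|}\chi(1)$ and $\sum_\chi|a_\chi|\chi(1)\le\frac{|C|}{|G|}\sum_\chi\chi(1)^2=|C|$; this, not the $\frac{|C|}{|G|}|G|^{1/2}$-type quantity you mention, is how the prefactor $\frac{|C|}{|G|}$ in the final bound is preserved.
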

 
 \noindent
In order to apply this theorem, the following variation of a result of Hensel \cite{He},   proved in  \cite{Se81}, is useful:

\begin{proposition}(\cite[Prop.~5, p.~129]{Se81}) \label{hensel}
Keep the above setting and notation.
Then
$$
\log \left|N_{K/\Q} (\disc (L/K))\right|
\leq
(|L : \Q| - |K : \Q|)
\left(
\ds\sum_{p \in {\cal{P}}(L/K)} \log p
\right)
+
|L : \Q| \log |L : K|,
$$
where
$$
{\cal{P}}(L/K) :=
\{ \text{primes 
$p$ :
there is a place $\mathfrak{p}$ of $K$, ramified in $L/K$, with $\mathfrak{p} | p $}
\}.
$$

\end{proposition}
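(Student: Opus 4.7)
My strategy is to reduce to a local bound on the different ideal $\mathfrak{d}_{L/K}$ of the extension $L/K$ and then sum contributions over all rational primes $p$.

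First, using the standard identity $\disc(L/K) = N_{L/K}(\mathfrak{d}_{L/K})$ together with transitivity of norms ($N_{K/\Q} \circ N_{L/K} = N_{L/\Q}$), I would rewrite
$$
\log \left| N_{K/\Q}(\disc(L/K)) \right| \;=\; \sum_p (\log p) \sum_{\mathfrak{P} \mid p} f(\mathfrak{P}/p)\, v_\mathfrak{P}(\mathfrak{d}_{L/K}),
$$
where $\mathfrak{P}$ runs over the finite primes of $L$. Only primes $p \in \mathcal{P}(L/K)$ contribute, since $v_\mathfrak{P}(\mathfrak{d}_{L/K}) = 0$ whenever the prime $\mathfrak{p}$ of $K$ below $\mathfrak{P}$ is unramified in $L/K$.

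Next, I would invoke the classical local estimate on the different (Serre, \emph{Corps locaux}, Ch.~III, \S 6, Prop.~13): for $\mathfrak{P} \mid \mathfrak{p}$ with $e := e(\mathfrak{P}/\mathfrak{p})$,
$$
v_\mathfrak{P}(\mathfrak{d}_{L/K}) \;\leq\; (e - 1) + v_\mathfrak{P}(e).
$$
Since $e$ divides $n := |L:K|$, one has $v_p(e) \leq v_p(n)$ and hence $v_\mathfrak{P}(e) = e(\mathfrak{P}/p)\, v_p(e) \leq e(\mathfrak{P}/p)\, v_p(n)$. This splits the contribution at each $p$ into a tame piece (from $e-1$) and a wild piece (from $v_\mathfrak{P}(e)$).

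For the tame piece, the degree relations $\sum_{\mathfrak{P} \mid \mathfrak{p}} e(\mathfrak{P}/\mathfrak{p}) f(\mathfrak{P}/\mathfrak{p}) = n$ and $\sum_{\mathfrak{p} \mid p} e(\mathfrak{p}/p) f(\mathfrak{p}/p) = |K:\Q|$, combined with the transitivity $f(\mathfrak{P}/p) = f(\mathfrak{P}/\mathfrak{p}) f(\mathfrak{p}/p)$, bound the contribution at each ramified $p$ by $(n-1)|K:\Q| = |L:\Q| - |K:\Q|$. For the wild piece, $\sum_{\mathfrak{P}\mid p} e(\mathfrak{P}/p) f(\mathfrak{P}/p) = |L:\Q|$ yields a contribution of $|L:\Q|\, v_p(n)$ at each $p$, and the identity $\sum_p v_p(n) \log p = \log n$ then assembles the wild terms into the global quantity $|L:\Q| \log n$. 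Adding the two pieces gives the claimed bound. The main technical point is the bookkeeping of the tower of ramification: one must use the transitivities $e(\mathfrak{P}/p) = e(\mathfrak{P}/\mathfrak{p}) e(\mathfrak{p}/p)$ and $f(\mathfrak{P}/p) = f(\mathfrak{P}/\mathfrak{p}) f(\mathfrak{p}/p)$ to telescope the local estimates into the two global terms $|L:\Q| - |K:\Q|$ and $|L:\Q| \log n$ without double counting.
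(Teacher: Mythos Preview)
The paper does not supply its own proof of this proposition; it is quoted verbatim from \cite[Prop.~5, p.~129]{Se81} as a tool, so there is nothing in the present paper to compare your argument against. That said, your approach is correct and is exactly the standard one (and the one Serre uses): express the relative discriminant as the norm of the different, apply the local bound $v_\mathfrak{P}(\mathfrak{d}_{L/K}) \le e-1 + e\,v_p(e)$ from \emph{Corps locaux}, and sum the tame and wild contributions separately using $\sum_{\mathfrak{P}\mid\mathfrak{p}} e f = |L:K|$, $\sum_{\mathfrak{p}\mid p} e f = |K:\Q|$, and $\sum_p v_p(n)\log p = \log n$. Your bookkeeping of the tower via transitivity of $e$ and $f$ is the right mechanism, and the bounds you obtain for each piece, namely $(|L:\Q|-|K:\Q|)\log p$ for the tame part at each $p\in\mathcal{P}(L/K)$ and $|L:\Q|\log|L:K|$ in total for the wild part, are exactly what is needed.
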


\subsubsection{$\ell$-adic extensions of a number field}

In \cite{Se81}, Serre used the effective versions of the Chebotarev Density Theorem of Lagarias $\&$ Odlyzko  \cite{LaOd} to deduce 
upper bounds for $\pi_C(x, L/K)$ in the case of an $\ell$-adic Galois extension $L/K$ of a number field $K$. We recall his main results below.

Let $K$ be a number field. 
Let $\ell$ be a rational prime and $G$ a compact $\ell$-adic Lie group of dimension $D$. 
Denote by 
 $Z(G)$ the center of $G$.
Let $C \subseteq G$ be a non-empty closed subset of $G$
that is stable under conjugation.  
In \cite[Section 3]{Se81}  Serre explains what it means for the {\em Minkowski dimension} 
$\dim_{\mathcal{M}} C$ of $C$ to be $\le d$.
Let $L/K$ be an infinite Galois extension, with Galois group $G$. 
For any $x > 0$, let
$$
\pi_{C}(x, L/K) :=
\#
\{
\mathfrak{p} \; \text{a place of $K$, unramified in $L/K$}:
N_{K/\Q}(\mathfrak{p}) \leq x,
\Frob_{\mathfrak{p}} \subseteq C
\}.
$$
Following  \cite[p.151]{Se81}, we define
$$
\epsilon(x) : = 
\frac{
\log x
}{
(\log \log x)^2 (\log \log \log x)
}
\quad \text{and} \quad
\epsilon_R(x) : = 
\frac{
x^{\frac{1}{2}}
}{
(\log x)^2
}.
$$

\begin{theorem}(\cite[Th\'eor\`eme~10, p.~151]{Se81})\label{serre-cheb-thm10}
Keep the above setting and notation.
Let $0 \leq d < D$ be such that the Minkowski dimension of $C$ satisfies
$
\dim_{\cal{M}} C \leq d.
$
Define
$
\displaystyle{\alpha := \frac{D - d}{D}.}
$
\begin{enumerate}
\item[(i)]
Unconditionally, we have
$$
\pi_C(x, L/K) \ll_{K, L, C} \frac{\li x}{\epsilon(x)^{\alpha}}.
$$
In particular, for any $\varepsilon > 0$, we have 
$$
\pi_C(x, L/K) \ll_{K, L, C, \varepsilon}  \frac{x}{(\log x)^{1 + \alpha - \varepsilon}}.
$$
\item[(ii)]
Under GRH for Dedekind zeta functions, we have
$$
\pi_C(x, L/K) \ll_{K, L, C} \frac{\li x}{\epsilon_R(x)^{\alpha}}.
$$
In particular, for any $\varepsilon > 0$, we have
$$
\pi_C(x, L/K) \ll_{K, L, C, \varepsilon}  x^{1 - \frac{\alpha}{2} + \varepsilon}.
$$
\end{enumerate}
\end{theorem}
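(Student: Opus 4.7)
The plan is to reduce the theorem about the infinite $\ell$-adic extension $L/K$ to the effective Chebotarev estimates already stated for finite Galois extensions, applied to a carefully chosen tower of finite quotients, and then to optimise the truncation level as a function of $x$. For each integer $n \geq 1$ large enough, let $G(\ell^n)$ be an appropriate congruence/filtration subgroup of $G$ (e.g.\ the subgroup defining the $n$-th level of a uniformly powerful pro-$\ell$ open subgroup of $G$), put $G_n := G/G(\ell^n)$, and let $L_n \subset L$ be the corresponding finite Galois extension of $K$ with $\Gal(L_n/K) = G_n$. Let $C_n \subseteq G_n$ be the image of $C$. Since $G$ is an $\ell$-adic Lie group of dimension $D$ we have $|G_n| \asymp \ell^{nD}$, while the Minkowski dimension hypothesis $\dim_{\mathcal{M}} C \le d$ gives
\begin{equation*}
|C_n| \ll \ell^{nd},
\qquad \text{hence} \qquad
\frac{|C_n|}{|G_n|} \ll \ell^{-n(D-d)} = \ell^{-n\alpha D}.
\end{equation*}
Every prime $\mathfrak p$ counted in $\pi_C(x,L/K)$ has Frobenius lying in $C_n$ when reduced modulo $G(\ell^n)$, so $\pi_C(x,L/K) \le \pi_{C_n}(x,L_n/K)$.

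The next step is to bound the discriminant of $L_n/K$ via Proposition \ref{hensel}. The primes ramified in $L_n/K$ lie in a fixed finite set (those ramifying in $L/K$, together with those above $\ell$), so
\begin{equation*}
\log |\disc(L_n/\Q)| \ll |L_n:\Q|\, (n\log \ell + R_{L/K}),
\end{equation*}
where $R_{L/K}$ absorbs the contribution from the fixed ramified primes. Then I would feed this into the two versions of the effective Chebotarev Density Theorem. Under GRH, Theorem \ref{cheb2} gives
\begin{equation*}
\pi_{C_n}(x,L_n/K) \ll \frac{|C_n|}{|G_n|}\,\pi(x) + \frac{|C_n|}{|G_n|}\,x^{1/2}\bigl(\log|\disc(L_n/\Q)| + |L_n:\Q|\log x\bigr),
\end{equation*}
while unconditionally one substitutes the Lagarias--Odlyzko bound, in which the saving over $x$ is only a power of $\log x$ moderated by $\log\log$ factors (this is why the $\epsilon(x)$ function of Serre appears).

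The final step is the optimisation. In the GRH case I balance the main-term gain $\ell^{-n\alpha D}\cdot x/\log x$ against the error, which is essentially $\ell^{-n\alpha D}\cdot x^{1/2}\cdot \ell^{nD}\log x$; choosing $\ell^{nD}$ of size roughly $x^{1/2}/(\log x)^2$ yields
\begin{equation*}
\pi_C(x,L/K) \ll \frac{\li x}{\epsilon_R(x)^{\alpha}},
\end{equation*}
from which the stated $x^{1-\alpha/2+\varepsilon}$ bound follows by absorbing logs. The unconditional case is the same in spirit: balance $\ell^{-n\alpha D}\,x/\log x$ against the weaker error, arriving at $\li x/\epsilon(x)^{\alpha}$ and hence the $x/(\log x)^{1+\alpha-\varepsilon}$ bound.

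The main obstacle I expect is the passage between the metric/topological Minkowski-dimension hypothesis on $C \subseteq G$ and the combinatorial estimate $|C_n|\ll \ell^{nd}$ on its image in the finite quotients; this requires identifying a filtration of $G$ compatible with the analytic structure so that the natural metric on $G$ coincides, up to constants, with the ultrametric induced by the $G(\ell^n)$, and then showing that $C$ can be covered by $O(\ell^{nd})$ cosets of $G(\ell^n)$. Once this is in place, the discriminant bound from Proposition \ref{hensel} and the balancing of $n = n(x)$ are routine, and the two parts (i) and (ii) follow by inserting the respective effective Chebotarev inputs.
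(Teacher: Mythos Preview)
The paper does not prove this statement; it simply quotes it as \cite[Th\'eor\`eme~10, p.~151]{Se81} and uses it as a black box in Section~3. So there is no in-paper proof to compare your proposal against.

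That said, your sketch is essentially Serre's own argument in \cite{Se81}: descend to the finite quotients $G_n$ via an open filtration, use the definition of Minkowski dimension to get $|C_n| \ll \ell^{nd}$ (this is precisely how Serre defines $\dim_{\mathcal M}$ in \S3 of \cite{Se81}, so no passage is needed---your ``main obstacle'' is in fact a tautology), bound the discriminant of $L_n/\Q$ by Proposition~\ref{hensel}, insert the relevant effective Chebotarev estimate, and optimise $n$ against $x$. Your balancing in the GRH case is correct and yields $\ell^{nD} \asymp \epsilon_R(x)$, giving $\li x / \epsilon_R(x)^\alpha$; the unconditional case likewise produces $\li x / \epsilon(x)^\alpha$ once the Lagarias--Odlyzko error (with its $\log\log$ factors) is inserted.
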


Serre obtains the following improvement in special cases:
\begin{theorem}({\cite[Th\'eor\`eme~12, p.~157]{Se81}}) \label{serre-cheb-thm12}
Keep the above setting and notation.
Let $0 \leq d < D$ be such that the Minkowski dimension of $C$ satisfies
$
\dim_{\cal{M}} C \leq d.
$
Define
$$
r_C := \inf_{M \in C} \dim \frac{G}{Z_G(M)},
$$
where $Z_G(M)$ denotes the centralizer of $M$ in $G$. 
Define
$$
\beta_C : = \frac{D - d}{D - \frac{r_C}{2}}.
$$
Then  $(i)$ and $(ii)$ of Theorem \ref{serre-cheb-thm10} hold with $\beta_C$ in place of $\alpha$. 
\end{theorem}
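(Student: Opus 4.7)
The plan is to refine the argument proving Theorem \ref{serre-cheb-thm10} by exploiting two additional structural facts: that $C$ is stable under $G$-conjugation, and that every $M \in C$ has centralizer of codimension $\geq r_C$. The overall setup mirrors that of Theorem \ref{serre-cheb-thm10}: I choose a filtration of $G$ by open normal subgroups $\Gamma_n$ and set $G_n := G/\Gamma_n$, $L_n := L^{\Gamma_n}$, so that $\log_{\ell} |G_n| \asymp nD$ and $\Gal(L_n/K) = G_n$. The Minkowski dimension hypothesis gives $|C_n| \ll \ell^{nd}$ for the image $C_n$ of $C$ in $G_n$, and $\pi_C(x,L/K) \leq \pi_{C_n}(x, L_n/K)$ for every $n$.

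The decisive new input comes from the centralizer hypothesis: each $G_n$-conjugacy class meeting $C_n$ has cardinality $\gg \ell^{n r_C}$, so $C_n$ decomposes into at most $\ll \ell^{n(d - r_C)}$ conjugacy classes. I then expand $\mathbf{1}_{C_n}$ in irreducible characters of $G_n$ (class functions are the natural test objects for conjugation-stable sets) and bound the resulting character sum via Parseval and Cauchy--Schwarz. The large conjugacy-class sizes translate, through a square-root mean-value argument, into an extra factor of $\ell^{-n r_C/2}$ in the error compared with what Theorem \ref{cheb2} gives when applied naively to $C_n$. Combined with Hensel's estimate $\log|\disc L_n| \ll |L_n:\Q|\cdot n \ll \ell^{nD} n$ from Proposition \ref{hensel}, this produces, under GRH,
\[
\pi_{C_n}(x, L_n/K) \;\ll\; \frac{|C_n|}{|G_n|}\, \pi(x) \;+\; \frac{|C_n|}{|G_n|}\,\ell^{-n r_C/2}\, x^{1/2}\, \ell^{nD}\log x,
\]
with an analogous unconditional estimate obtained by the same mechanism from the Lagarias--Odlyzko version.

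The final step is to optimize in $n$. Balancing the two terms gives $\ell^{n(D - r_C/2)} \asymp x^{1/2}/\log x$ under GRH, and substituting back yields $\pi_C(x, L/K) \ll x^{1 - (D-d)/(2(D - r_C/2)) + \varepsilon} = x^{1 - \beta_C/2 + \varepsilon}$; the unconditional case follows from the same calculation with $\epsilon(x)$ in place of $x^{1/2}$. The main obstacle is the character-theoretic gain of $\ell^{-n r_C/2}$: one must verify that Cauchy--Schwarz on the character expansion of $\mathbf{1}_{C_n}$ genuinely extracts a square root of the typical conjugacy-class size, and that it combines correctly with the dimensions $\chi(1)$ appearing in the per-character Chebotarev error. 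The fractional exponent $r_C/2$ (rather than $r_C$) is an essential feature of this interaction between the GRH square root and the Cauchy--Schwarz gain, and any slip at this step would give the wrong exponent. A secondary obstacle is checking that the whole estimate passes cleanly to the $\ell$-adic limit and that implied constants depend only on $(K, L, C)$, which requires a careful choice of the filtration $\{\Gamma_n\}$.
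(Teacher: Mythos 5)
The paper does not prove this theorem; it quotes it as Serre's Th\'eor\`eme~12 from \cite{Se81}. Your proposal follows Serre's own argument: he too replaces the basic effective Chebotarev bound by its character-level refinement, extracts the gain $\ell^{-n r_C/2}$ from a Cauchy--Schwarz estimate on $\sum_{\chi}\left|\langle \mathbf{1}_{C_n},\chi\rangle\right|\chi(1)$ combined with the lower bound $\gg \ell^{n r_C}$ on the sizes of the conjugacy classes comprising $C_n$, and then optimizes the level $n$ exactly as you describe.
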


Note that $r_C \geq 0$, hence $\beta_C \geq \alpha$ and so Theorem \ref{serre-cheb-thm12} is Theorem \ref{serre-cheb-thm10} when 
$\beta_C = \alpha$. When $r_C \geq 1$, hence $\beta_C > \alpha$, Theorem \ref{serre-cheb-thm12} improves upon Theorem \ref{serre-cheb-thm10}. This happens when $C \cap Z(G) = \emptyset$.

\subsection{Abelian varieties}
\label{subsec:av}

Let $A/\Q$ be an
abelian variety of dimension $g$
and let $p$ be a prime of good reduction.
Recall that for any root $\pi \in \C$ of  $P_{A, p}(X)$ we have  $|\pi| = \sqrt{p}$,  
hence
\begin{equation}\label{bound-a1p}
|a_{1, p}| < 2 g \sqrt{p}.
\end{equation}

Property (\ref{p-Weil-char}) links the $p$-Weil polynomial $P_{A, p}(X)$ to the division fields of $A$,
in particular to the Galois representation defining $\rho_{A}$.

For  arbitrary integers $m \geq 1$ and $t$, 
we set
\begin{eqnarray*}
G(m) 
&:=&
 \Im \bar{\rho}_{A, m},
 \\
C(m, t)
& := &
\{M \in G(m): \tr M \equiv t (\mod m)\}.
\end{eqnarray*}
We recall that:

$\bullet$
by the N\'{e}ron-Ogg-Shafarevich criterion, 
\begin{equation}\label{ramification}
\text{
the extension $\Q(A[m])/\Q$ is unramified outside $m N_A$; 
}
\end{equation}

$\bullet$
by the injectivity of  the restriction of $\bar{\rho}_{A, m}$ to ${\Gal(\Q(A[m])/\Q)}$,
\begin{equation}\label{degree}
|G(m)| \leq |\GSp_{2 g} (\Z/m \Z)| \leq m^{2g^2 + g +1}.
\end{equation}

In many cases, the image of the representation $\rho_{A}$ is better understood.
For example, as already mentioned in Remark \ref{open-remark}  of Section \ref{introsect}, for several classes of abelian varieties $A/\Q$ with a trivial endomorphism ring, $\Im \rho_A$ is open in $\GSp_{2g} (\hat{\Z})$. In particular,
for such $A$ we have that:

$\bullet$
$ \Im \rho_{A, \ell}$ is open in $\GSp_{2 g}(\Z_{\ell})$ for all rational primes $\ell$;

$\bullet$
$G(\ell) \simeq \GSp_{2 g}(\Z/\ell \Z)$
for all but finitely many rational primes $\ell$.

Lemma \ref{open-torsion-cond} below gives  
further  consequences of the openness of $\Im \rho_A$  in $\GSp_{2g}(\hat{\Z})$.
To state the lemma, we introduce the following notation:
$$
F_t(m) := \frac{
m |C(m,t)|}
{|G(m)|}, \qquad
H_t(m) := 
\frac{
m 
|
\{M \in \GSp_{2g}(\Z/m \Z) : \tr M \equiv t (\mod m)\}
|
}{ |\GSp_{2g}(\Z/m \Z)| };
$$
for a sequence $(s_n)_n$, 
 $$\ds\lim_{m \; {\widetilde{\rightarrow}}  \;  \infty} s_m
 := 
 \ds\lim_{n \rightarrow \infty} s_{m_n}
 \; 
 \text{ with }\;
m_n := \ds\prod_{\ell \leq n} \ell^n.
$$

\begin{lemma}\label{open-torsion-cond}
Let $A/\Q$ be a principally polarized abelian variety of dimension $g$
such that $\Im \rho_A$ is open in $\GSp_{2g}(\hat{\Z})$.
\begin{enumerate}
\item[(i)]
There exists an integer $m \geq 1$ such that $\rho_A(\Gal(\overline{\Q}/\Q)) = \Pi^{-1}(G(m))$,
where we recall that
$$
\Pi : \GSp_{2 g}(\hat{\Z}) \longrightarrow \GSp_{2 g}(\Z/m \Z)
$$
is the natural projection. 
Denote by $m_A$ the least such integer.
\item[(ii)]
For all positive integers $m_1, m_2$ with
$
m_1 | m_A^{\infty} \; \; \text{and} \; \; (m_2, m_A) = 1,
$
we have
$$
G(m_1 m_2)  \simeq G(m_1) \times G(m_2) 
= G(m_1) \times \GSp_{2g}(\Z/m_2 \Z).
$$
\item[(iii)]
For all $t \in \Z$ we have
$$
\ds\prod_{ \ell}
H_t(\ell)
< \infty.
$$
In particular, if $t \neq 0$, then
$$
\ds\prod_{\ell \nmid m_A}
H_t
\left(
\ell^{v_{\ell}(t) + 1}
\right)
< \infty.
$$

\item[(iv)]
For all $t \in \Z$, $t \neq 0$, 
we have
\[
\ds\lim_{m \; {\widetilde{\rightarrow}}  \;  \infty}
F_t(m)
=
F_t
\left(
m_A \ds\prod_{\ell | m_A} \ell^{v_{\ell}(t)}
\right)
\cdot
\ds\prod_{\ell \nmid m_A}
H_t \left(\ell^{v_{\ell}(t) + 1} \right).
\]
\end{enumerate}
\end{lemma}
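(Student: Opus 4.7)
My plan is to handle (i)--(iv) in sequence, with part (ii) supplying the multiplicative structure that drives (iv); I expect (iii) to be the main obstacle. \emph{Part (i):} since $\GSp_{2g}(\hat{\Z}) = \varprojlim_m \GSp_{2g}(\Z/m\Z)$ is profinite and the kernels $\Ker \Pi_m$ form a neighborhood basis of the identity, openness of $\Im \rho_A$ forces $\Im \rho_A \supseteq \Ker \Pi_m$ for some positive integer $m$; for any such $m$ one automatically has $\Im \rho_A = \Pi_m^{-1}(G(m))$, and the set of such $m$ is closed under taking positive multiples, hence has a minimum $m_A$.

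\emph{Part (ii):} the containment $G(m_1 m_2) \subseteq G(m_1) \times G(m_2)$ is immediate from CRT, and the hypothesis $(m_2, m_A) = 1$ combined with (i) yields $G(m_2) = \GSp_{2g}(\Z/m_2 \Z)$, since the defining constraint ``$\Pi_{m_A}(g) \in G(m_A)$'' places no restriction at primes $\ell \nmid m_A$. For the reverse containment, given $(y_1, y_2) \in G(m_1) \times \GSp_{2g}(\Z/m_2\Z)$, I would pick any $g \in \Im \rho_A$ with $g \equiv y_1 \pmod{\lcm(m_1, m_A)}$, set $z_2 := g \bmod m_2$, and adjust $g$ by a suitable $h \in \Ker \Pi_{m_A}$ satisfying $h \equiv I \pmod{\lcm(m_1, m_A)}$ and $h \equiv z_2^{-1} y_2 \pmod{m_2}$. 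Such $h$ exists because $\Ker \Pi_{m_A}$ contains the full local factor $\GSp_{2g}(\Z_\ell)$ at each prime $\ell \nmid m_A$, so $h$ may be prescribed freely modulo $m_2$ while kept trivial at the primes dividing $m_A$.

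\emph{Part (iii):} I would establish the uniform estimate $H_t(\ell) = 1 + O_g(\ell^{-2})$ for all but finitely many primes $\ell$, by explicitly counting matrices in $\GSp_{2g}(\F_\ell)$ with prescribed trace value $t$. Both $|\GSp_{2g}(\F_\ell)|$ and the fiber size can be expressed as polynomials in $\ell$ (via Bruhat decomposition or a class-equation argument), and the leading and first subleading terms cancel in the ratio, yielding the bound. Convergence of $\prod_\ell H_t(\ell)$ then follows by comparison with $\sum_\ell \ell^{-2}$, and the ``in particular'' statement is immediate since $H_t(\ell^{v_\ell(t)+1}) = H_t(\ell)$ whenever $\ell \nmid t$ and only finitely many primes divide the nonzero integer $t$. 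This is the main obstacle: a naive Lang--Weil bound gives only $1 + O(\ell^{-1/2})$, which is insufficient for convergence, so one really does need the polynomial structure of $\GSp_{2g}$ and its trace fibers.

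\emph{Part (iv):} combining (ii) with CRT shows that $F_t$ is multiplicative on coprime decompositions; applied to $m_n = \prod_{\ell \leq n} \ell^n$ this yields
\[
F_t(m_n) \;=\; F_t\Bigl(\prod_{\ell \mid m_A} \ell^{n}\Bigr) \cdot \prod_{\ell \nmid m_A,\, \ell \leq n} H_t(\ell^n).
\]
It remains to establish two stabilizations: for $\ell \nmid m_A$ and $n \geq v_\ell(t) + 1$, $H_t(\ell^n) = H_t(\ell^{v_\ell(t)+1})$; and for $\ell \mid m_A$ and $n \geq v_\ell(m_A) + v_\ell(t)$, the $\ell$-local factor of $F_t$ equals its value at $n = v_\ell(m_A) + v_\ell(t)$. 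Both are Hensel-type arguments: the kernel of $\GSp_{2g}(\Z/\ell^n\Z) \to \GSp_{2g}(\Z/\ell^{n-1}\Z)$ is a free $\F_\ell$-module on which the trace is a surjective $\F_\ell$-linear form, so tightening $\tr M \equiv t \pmod{\ell^{n-1}}$ to $\tr M \equiv t \pmod{\ell^n}$ cuts fiber sizes by exactly $1/\ell$, precisely canceling the extra factor of $\ell$ in the definition of $H_t$. Reassembling the stabilized $\ell \mid m_A$ factors via (ii) produces $F_t\bigl(m_A \prod_{\ell \mid m_A} \ell^{v_\ell(t)}\bigr)$, and the convergent tail $\prod_{\ell \nmid m_A} H_t(\ell^{v_\ell(t)+1})$ is furnished by (iii).
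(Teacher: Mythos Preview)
Your treatments of (i) and (ii) are fine and essentially match the paper's (terse) argument.

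For (iii), the sentence ``the leading and first subleading terms cancel in the ratio'' is the entire content of the claim $H_t(\ell) = 1 + O_g(\ell^{-2})$, and you have not justified it; neither Bruhat decomposition nor a class-equation argument automatically produces this second-order cancellation. The paper does not proceed by direct polynomial bookkeeping. Instead it first proves the weaker bound $H_t(\ell) = 1 + O(\ell^{-1})$ (via an equidistribution estimate of Achter--Holden), and then upgrades it for $t \not\equiv 0$ by exploiting the bijection $M \mapsto (t_2 t_1^{-1})M$, which shows that all nonzero-trace fibers in $\GSp_{2g}(\F_\ell)$ have the same size; combining this with $|\GSp_{2g}(\F_\ell)| = |C(\ell,0)| + (\ell-1)|C(\ell,t)|$ and the $O(\ell^{-1})$ bound at $t=0$ yields the $O(\ell^{-2})$ bound for $t \neq 0$. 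The case $t=0$ requires separate work: the paper invokes Kim's explicit formula for $|C(\ell,t)|$ in terms of Kloosterman-sum power moments and carries out a nontrivial degree analysis to bound the discrepancy $|C(\ell,0)| - |C(\ell,t)|$.

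For (iv), your Hensel argument has a genuine error. The kernel of $\GSp_{2g}(\Z/\ell^n\Z) \to \GSp_{2g}(\Z/\ell^{n-1}\Z)$ is indeed $\mathfrak{gsp}_{2g}(\F_\ell)$, but the relevant linear form on a fiber over $\bar M_0$ is $Y \mapsto \tr(\bar M_0 Y)$, \emph{not} $Y \mapsto \tr Y$, and this form is \emph{not} always surjective for $g \geq 2$. For instance, with $g=2$ and $\bar M_0 = \mathrm{diag}(1,-1,1,-1) \in \Sp_4(\F_\ell)$, one checks directly that $\tr(\bar M_0 Y) = 0$ for every $Y \in \mathfrak{gsp}_4(\F_\ell)$, so over this base point the trace condition is not cut down by $1/\ell$ at all. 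The paper avoids this by never working fiber-by-fiber: it instead shows, via multiplication by a scalar $u I_{2g}$ with $u \equiv 1 \pmod{m_A}$, that $|C(m\ell^{v_\ell(t)+1}, s)|$ is constant as $s$ ranges over residues with $s \equiv t \pmod{m\ell^{v_\ell(t)}}$. Summing these equal fiber sizes and comparing with the preimage of $C(m\ell^{v_\ell(t)}, t)$ gives the stabilization $F_t(m\ell^{v_\ell(t)+1}) = F_t(m\ell^{v_\ell(t)})$ directly, and this is where the hypothesis $t \neq 0$ enters (so that $v_\ell(t) < \infty$ and the scalar $u$ can be constructed).
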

\begin{proof}
Parts (i) and (ii) are clear from  the  openness assumption on $\Im \rho_A$.
For part (iii), 
let $\ell \nmid m_A$ and $t$ be fixed.
First, we will show that 
\begin{equation}\label{quotient-first}
\frac{
\ell \; |C(\ell, t)|}{
|\GSp_{2g}(\Z/ \ell \Z)|
}
=
1 + \O\left(\frac{1}{\ell}\right).
\end{equation}

Recall that the multiplicator of $\GSp_{2g}(\Z /\ell \Z)$
is the character of  $\GSp_{2g}(\Z /\ell \Z)$ with kernel  $\Sp_{2g}(\Z /\ell \Z)$;
we denote it by $\text{mult}$.
Let $\car (M)$ denote the characteristic polynomial of a square matrix $M$.
For  $\gamma \in (\Z / \ell \Z)^{\times}$, define 
$$\GSp_{2g}(\Z /\ell \Z)^{\gamma} := \text{mult}^{-1}(\gamma),$$ 
$$C(\ell, t)^{\gamma} := C(\ell, t) \cap \GSp_{2g}(\Z / \ell \Z )^{\gamma},$$
 $$
{\mathcal{G}}(\ell)^{\gamma}
 :=
 \left\{
 \car (M) : 
 M \in \GSp_{2g}(\Z / \ell \Z )^{\gamma}
  \right\},
  $$ 
  $$
 {\mathcal{C}}(\ell, t)^{\gamma}
  :=
  \left\{
  M \in {\mathcal{G}}(\ell)^{\gamma} :
  \tr M = t
  \right\}.
  $$
  By \cite[Lemma 2.4, p.~631]{AcHo},
$$ 
\left(
 \frac{\ell}{\ell + 1} 
 \right)^{2g^2+g} 
 \frac{ |{\mathcal{C}}(\ell, t)^{\gamma}| }{|{\mathcal{G}}(\ell)^{\gamma}|} 
 \leq 
 \frac{ |C(\ell, t)^{\gamma}|  }{ |\Sp_{2g}(\Z / \ell \Z)| }
 \leq 
 \left( 
 \frac{\ell}{\ell - 1}
  \right)^{2g^2+g}
  \frac{|{\mathcal{C}}(\ell, t)^{\gamma}| }{|{\mathcal{G}}(\ell)^{\gamma}| }.
$$ 
Noting that 
$|{\mathcal{C}}(\ell, t)^{\gamma}|= \ell^{g-1} $ 
and 
$|{\mathcal{G}}(\ell)^{\gamma}| = \ell^g$, 
we deduce that
$$
\left( \frac{\ell}{\ell + 1} \right)^{2g^2+g} \cdot \frac{1 }{\ell }  
\leq 
\frac{ |C(\ell, t)^{\gamma}|  }{ |\Sp_{2g}(\Z / \ell \Z)| }  
\leq 
\left( \frac{\ell}{\ell - 1} \right)^{2g^2+g}  \cdot \frac{1 }{\ell }. 
$$
Combining the above inequalities for all $\gamma \in (\Z / \ell \Z)^{\times}$ 
and multiplying by $\ell$ gives
$$  
 \left( \frac{\ell}{\ell + 1} \right)^{2g^2+g}  
 \leq   
 \frac{\ell  \; |C(\ell, t)| }{ | \GSp_{2g}(\Z / \ell \Z ) | }  
 \leq 
 \left( \frac{\ell}{\ell - 1} \right)^{2g^2+g}.
 $$ 
This completes the proof of (\ref{quotient-first}).

Next  we will prove that
\begin{equation}\label{quotient-second}
\frac{
\ell \;  |C(\ell, t)|}{
|\GSp_{2g}(\Z/ \ell \Z)|
}
=
1 + \O\left(\frac{1}{\ell^2}\right).
\end{equation}
This ensures the convergence of the infinite product
$\ds\prod_{\ell} H_t(\ell)$,
proving 
(iii).

We first prove (\ref{quotient-second}) for $t \neq 0$.
For this, observe that for any  $t_1$, $t_2 \in \Z$  we have
$$
t_1 \equiv t_2 (\mod \ell) \; \Rightarrow \; C(\ell, t_1) = C(\ell, t_2)
$$
and
$$
t_1 \not\equiv 0 (\mod \ell), \;
t_2 \not\equiv 0 (\mod \ell) \;
\Rightarrow \; |C(\ell, t_1)| = |C(\ell, t_2)|.
$$
Indeed, the first assertion is trivial, while the second assertion follows by noting that,
if 
$t_1 \not\equiv 0 (\mod \ell)$
and
$t_2 \not\equiv 0 (\mod \ell)$,
then  
the endomorphism $\left[t_2 t_1^{-1}\right]$ of $\GSp_{2g}(\Z/\ell \Z)$ defined by multiplication by 
$t_2 t_1^{-1}$ is a bijection 
satisfying that
$\left[t_2 t_1^{-1}\right] (C(\ell, t_1)) = C(\ell, t_2)$.

From the above observations, 
$$
|\GSp_{2g}(\Z/\ell \Z)|
=
|C(\ell, 0)| + (\ell - 1) \;  |C(\ell, t)|.
$$  
It is now easy to show that (\ref{quotient-second})  follows
from this along with (\ref{quotient-first}) for $|C(\ell, 0)|$.

 Now we prove  (\ref{quotient-second}) for $t = 0$.
 When $g =1$, 
 a straightforward calculation gives that 
$$|C(\ell, 0)| = \ell^3 - \ell^2$$
 and so
$$
\frac{\ell |C(\ell, 0)|}{|\GL_2(\Z/\ell \Z)|}
=
\frac{\ell^3 (\ell - 1)}{\ell (\ell - 1) (\ell^2 - 1)}
=
\frac{\ell^2}{\ell^2 - 1} = 1 + \O\left( \frac{1}{\ell^2} \right).
$$
When $g \geq 2$, we proceed as follows.  
By \cite[Theorem~5.3, p.~170]{Ki}, 
        \begin{equation}
                \label{eqn:Kim}
        |C(\ell,t)| = g(\ell) + \left\{ \begin{array}{ll} - \ell^{-1}f(\ell) & \text{if} \;  t \neq 0, \\ \ell^{-1}(\ell-1)f(\ell) &  \text{if} \;  t = 0, \end{array} \right.
        \end{equation}
  for some explicit  polynomials
 $f(\ell)$ and $g(\ell)$ in $\ell$.
 Of relevance to us is that the degree $d_{g(\ell)}$ of the leading term 
 of $g(\ell)$ in $\ell$
  satisfies
 \begin{equation}\label{leading-g}
 d_{g(\ell)} = 2 g^2 + g,
 \end{equation}       
 and that the degree $d_{f(\ell)}$  of the leading term of $f(\ell)$ in $\ell$, while less explicit,
 can be shown to satisfy
\begin{equation}\label{leading-f}
 d_{f(\ell)} \leq \frac{3 g^2}{2} + \frac{g}{2} + 1.
 \end{equation}    
 
 Before justifying this bound, let us complete the proof of (\ref{quotient-second}) for $t = 0$, $g \geq 2$.    
From \eqref{eqn:Kim}, we see that for any $t \neq 0$,
$$
|C(\ell,0)| = f(\ell) + |C(\ell,t)|.
$$ 
Since we already know \eqref{quotient-second} for $t \neq 0$, it suffices to show that
$$
\frac{ \ell f(\ell) }{ |\GSp_{2g}(\Z/\ell\Z)| } = \O\left( \frac{1}{\ell^2} \right).
$$
This  follows from (\ref{leading-g}) and (\ref{leading-f}), as well as the assumption that $g \geq 2$; indeed,
 $$
 d_{g(\ell)} - d_{f(\ell)}  \geq \frac{g^2}{2} + \frac{g}{2} - 1 \geq 2.
$$
 Consequently, \eqref{quotient-second} holds 
  for $t=0$, $g \geq 2$.

Finally, let us justify (\ref{leading-f}).        
The expression for $f(\ell)$ is rather delicate; indeed, Kim showed that
 \begin{multline}
                \label{eqn:kim2}
                f(\ell) = \ell^{g^2-1}\sum_{b=0}^{\lfloor g/2 \rfloor} \ell^{b(b+1)} \prod_{m=0}^{2b-1} \frac{ (\ell^{g-m}-1) }{ (\ell^{2b-m}-1 )} \prod_{j=1}^b (\ell^{2j-1}-1)
        \sum_{k=1}^{\lfloor (g-2b+2)/2 \rfloor} \ell^k \\ \times \sum_{\alpha \in \FF_\ell^\times} K(\alpha)^{g-2b+2-2k}\sum_{\substack{j_1,\ldots,j_{k-1}\\2k-1 \le j_{k-1} \le \cdots \le j_1 \le g-2b+1}} \prod_{v=1}^{k-1} (\ell^{j_v - 2v}-1 ),
\end{multline}
where $K(\alpha)$ is the ordinary Kloosterman sum 
$$ 
K(\alpha) = K(\lambda; \alpha, 1) := \sum_{a \in \FF_q^\times} \lambda( a\alpha + a^{-1} )
$$
 for any non-trivial additive character $\lambda$ of $\FF_q$.

To find the leading term, we first focus on 
$$
\ds\sum_{\alpha \in \FF_\ell^\times} K(\alpha)^r
 $$
for an arbitrary integer $r  \geq 0$.       

When $r = 0$, the sum is simply $\ell-1$. 
When $r=1$, by Weil's  estimate on Kloosterman sums  $\left| K(\alpha) \right| \le 2\sqrt{\ell}$, we deduce that  
$\left|\ds\sum_{\alpha \in \FF_\ell^\times} K(\alpha)\right| \leq \ell^2$.
When $r \geq 2$, Kim remarks that
 \[
                \sum_{\alpha \in \FF_\ell^\times} K(\alpha)^r = 
                \ell^2 M_{r-1} - (\ell-1)^{r-1} + 2(-1)^{r-1}, 
   \] 
   where 
   $M_0 := 1$
and for any integer $s \geq 1$, 
$$
M_s 
:=
\left|\left\{
(\alpha_1, \ldots \alpha_s) \in (\FF_\ell^\times)^s :  
\alpha_1+\ldots+\alpha_s =1 \; \text{and}  \; \alpha_1^{-1} + \ldots + \alpha_s^{-1}=1
\right\}
\right|.
$$
Note that $M_1 = 1$ and that
for  $s \ge 2$, the first of the two conditions defining $M_s$  gives $\alpha_1$ linearly in terms of the other $\alpha_i$, while the second gives $\alpha_2$ as a root of a quadratic in the remaining terms.  
Thus, if $s \geq 2$, then $M_{s} \leq 2(\ell-1)^{s-2}$.  
It follows that when $r=2$, the sum
$\sum_{\alpha \in \FF_\ell^\times} K(\alpha)^r$
is bounded by an expression of leading degree at most $2$ in $\ell$ (by direct computation using $M_1$), and  when  $r \ge 3$, 
by an expression of leading degree at most $r-1$ in $\ell$.

        Using the above estimates, we now focus on the degree $d_{f(\ell)}$ of the leading term in  \eqref{eqn:kim2};  we deduce that
         \[
                d_{f(\ell)} \le \max\left\{ g^2 + 2bg - 2b^2 + b + k + kg - 2bk - k^2 + 1
                : 0 \le b \le \left\lfloor \frac{g}{2} \right\rfloor, 1 \le k \le \left\lfloor \frac{g-2b+2}{2} \right\rfloor \right\}.
        \]
The quadratic function above is maximized 
when $b = \lfloor \frac{g}{2} \rfloor$ and $k = \lfloor \frac{g-2b+2}{2} \rfloor =1$, with maximal value 
$\frac{3g^2}{2} + \frac{g}{2} + 1$;
 the bound
(\ref{leading-f})  follows. 
This proves \eqref{quotient-second}, and therefore  the first part of (iii).

To prove the second part of (iii), observe that $t \neq 0$ is divisible by at most finitely many primes, and so 
$\ds\prod_{\ell \nmid m_A}
H_t
\left(
\ell^{v_{\ell}(t) + 1}
\right)$
is a constant multiple of
$\ds\prod_{ \ell}
H_t(\ell)$,
hence finite by the first part of (iii).

Now we prove (iv). Fix an arbitrary $t \in \Z$ with $t \neq 0$.
For now, fix also a positive integer $m$ such that
$(m, m_A) = 1$ or $m_A |m$, and a prime divisor $\ell$ of $m$.
Write
$m = m_0\ell^{v_{\ell}(m)}$,
$t = t_0 \ell^{v_{\ell}(t)}$,
where $m_0, t_0 \in \Z$ satisfy $\ell \nmid m_0$, $\ell \nmid t_0$,
and note that $v_{\ell} (m) \geq 1$. For any $s \in \Z$ such that 
$s \equiv t (\mod m \ell^{v_{\ell}(t)})$,
we have $v_{\ell}(s) = v_{\ell}(t)$ since $v_{\ell}(m) \ge 1$. 
Therefore we may write
$
s = s_0\ell^{v_{\ell}(t)}
$
with $s_0 \in \Z$ and $\ell \nmid s_0$.  
By the Chinese Remainder Lemma, there exists $u \in \Z$ such that
$u \equiv t_0^{-1}s_0 \left(\mod \ell \right)$ 
and
$u \equiv 1 (\mod m_0)$,
hence such that
\begin{eqnarray}
        u \ t &\equiv& s \left(\mod \ell^{v_{\ell}(t) + 1} \right), \label{cinci} 
\\
u &\equiv& 1 (\mod m). \label{sase}
\end{eqnarray}

We have
\begin{equation}\label{sapte}
u I_{2 g} \in G(m),
\end{equation}
since if 
$(m, m_A) = 1$ then by (ii) we have 
$G(m) = \GSp_{2 g} (\Z/m \Z)$ and so 
$u I_{2 g} \in G(m)$, while 
if 
$m _A| m$ then 
$
u \equiv 1 (\mod m_A)
$ (by (\ref{sase}))
and 
$$
\left\{
M \in \GSp_{2 g} \left(\hat{\Z}\right):
M \equiv 1 (\mod m_A)
\right\}
\subseteq
\Im \rho_A
$$
(by the definition of $m_A$) and thus $u I_{2 g} \in G(m)$.

Using (\ref{cinci}) and (\ref{sapte}), we deduce that the multiplication by $u I_{2g}$ map
\begin{eqnarray*}
C\left(
m \ \ell^{v_{\ell}(t) + 1},  t
\right)
&\longrightarrow&
C\left(
m  \ \ell^{v_{\ell}(t) + 1}, s
\right)
\\
M &\mapsto& u I_{2 g} M
\end{eqnarray*}
is a bijection; in particular,
\begin{equation}\label{opt}
\left|
C\left(
m \ \ell^{v_{\ell}(t) + 1}, s
\right)
\right|
=
\left|
C\left(
m \ \ell^{v_{\ell}(t) + 1}, t
\right)
\right|.
\end{equation}

Now consider the natural projection
$$
\Pi: 
G\left(
m \ \ell^{v_{\ell}(t) + 1}
\right)
\longrightarrow
G\left(
m\ell^{v_{\ell}(t)}
\right)
$$
and observe that
\begin{equation}\label{noua}
\left|
\Pi^{-1}(I_{2g})
\right|
\cdot
\left|
G\left(
m\ell^{v_{\ell}(t)}
\right)
\right|
=
\left|
G\left(
m\ell^{v_{\ell}(t) + 1}
\right)
\right|.
\end{equation}

Letting
$$
S :=
\left\{
s \left(\mod m\ell^{v_{\ell}(t)}\right) :
s \equiv t \left(\mod m\ell^{v_{\ell}(t)}\right)
\right\},
$$
and using  (\ref{opt}), we obtain
\begin{eqnarray*}
\left|
\Pi^{-1}(I_{2g})
\right|
\cdot
\left|
C\left(
m\ell^{v_{\ell}(t)}, t
\right)
\right|
&=&
\left|
\Pi^{-1}
\left(
C\left(
m\ell^{v_{\ell}(t)}, t
\right)
\right)
\right|
\\
&=&
\left|
\ds\bigcup_{s \left(\mod m\ell^{v_{\ell}(t)}\right) \in S}
C\left(
m\ell^{v_{\ell}(t) + 1 }, s
\right)
\right|
\\
&=&
|S|
\cdot
\left|
C\left(
m\ell^{v_{\ell}(t) + 1 }, t
\right)
\right|
\\
&=&
\ell 
\cdot
\left|
C\left(
m\ell^{v_{\ell}(t) + 1 }, t
\right)
\right|,
\end{eqnarray*}
giving
\begin{equation}\label{zece}
\left|
\Pi^{-1}(I_{2g})
\right|
\cdot
\left|
C\left(
m\ell^{v_{\ell}(t)}, t
\right)
\right|
=
\ell
\cdot
\left|
C\left(
m\ell^{v_{\ell}(t) + 1}, t
\right)
\right|.
\end{equation}

Putting together (\ref{noua}) and (\ref{zece}), we  deduce
that for all positive integers $m$ such that
$(m, m_A) = 1$ or $m_A |m$, and for all primes $\ell\mid m$, we have
$$
F_t\left(
m\ell^{v_{\ell}(t) + 1 }
\right)
=
F_t\left( 
m\ell^{v_{\ell}(t)}
\right)
$$
and thus
$$
F_t\left(
m\ell^k
\right)
=
F_t\left(
m\ell^{v_{\ell}(t)}
\right) \quad \text{for all $k \geq v_{\ell}(t)$.}
$$
Therefore for all $d \mid m_A$ we have
\begin{equation}\label{trei}
F_t
\left(
d \ m_A \ds\prod_{\ell | m_A} \ell^{v_{\ell}(t)} 
\right)
=
F_t\left(
m_A \ds\prod_{\ell | m_A} \ell^{v_{\ell}(t)} 
\right)
\end{equation}
and for all $k \geq 1$ and all primes $\ell \nmid m_A$ we have
\begin{equation}\label{patru}
F_t\left(
\ell^{v_{\ell}(t) + k}
\right)
=
F_t\left(
\ell^{v_{\ell}(t) + 1}
\right).
\end{equation}

Now
for {\em any} positive integer $m$ consider its unique factorization
$$
m = m_1 \cdot m_2, \; \text{with} \; m_1 | m_A^{\infty} \;  \text{ and  $(m_2, m_A) = 1$.}
$$
By (ii),
$$
F_t(m) 
=
F_t(m_1)
\;
\ds\prod_{\ell | m_2}
H_t\left(
\ell^{v_{\ell}(m_2)}
\right).
$$

Using (\ref{trei}) for the second line below and (\ref{patru}) for the third line,
we have
\begin{eqnarray*}
\lim_{m \; \widetilde{\rightarrow} \; \infty} F_t(m)
&=&
\lim_{m \; \widetilde{\rightarrow} \; \infty} 
F_t(m_1) \;
\ds\prod_{\ell | m_2}
H_t\left(
\ell^{v_{\ell}(m_2)}
\right)
\\
&=&
F_t\left(
m_A \ds\prod_{\ell | m_A} \ell^{v_{\ell}(t)}
\right)
\; 
\cdot
\ds\lim_{x \rightarrow \infty}
\ds\prod_{
\ell < x
\atop{\ell \nmid m_A}
}
\ds\lim_{n \rightarrow \infty}
H_t\left(\ell^n\right)
\\
&=&
F_t\left(
m_A \ds\prod_{\ell | m_A} \ell^{v_{\ell}(t)}
\right)
\; 
\cdot
\ds\prod_{\ell \nmid m_A}
H_t\left(
\ell^{v_{\ell}(t) + 1}
\right),
\end{eqnarray*}
which gives (iv).
\end{proof}

\begin{remark}\label{remark-g2tracecounts}
{\emph{
As in the case $g =1$,  when $g = 2$ it is possible to derive closed formulae for the quotient
 $\frac{|C(\ell, t)|}{|\GSp_{4}(\Z/\ell \Z)|}$; indeed, we have
\[
        |\GSp_4(\Z/\ell\Z)| = \ell^4(\ell-1)(\ell^2-1)(\ell^4-1)
\]
and
\begin{equation}
        \label{eqn:g2clt}
        |C(\ell,t)|
        = \left\{ \begin{array}{ll}
                        \ell^5(\ell-1)(\ell^4-\ell-1) &  \; \text{if} \;  t = 0, \\
                        \ell^4(\ell^6-\ell^5-\ell^4+\ell+1) & \; \text{if} \;  t \neq 0. \\
                \end{array} \right. 
        \end{equation}
We sketch a proof of the latter using arguments from \cite{CaFoHuSu}; we leave it as an exercise to the reader to derive these formulae using the aforementioned results of \cite{Ki}.
Note that we will  use these formulae  in Remark \ref{example}.
\\
Define
        \[
                N_{\ell,t} := 
                \left|\left\{
                 (x,y,\delta) \in ((\Z/\ell\Z)^\times)^3 : y \neq -\delta, \left(x + \frac{y}{x}\right) \left(1 + \frac{\delta}{y}\right) = t 
                 \right\}
                 \right|.
        \]
It follows from the proof of \cite[Theorem~12]{CaFoHuSu} that 
$$
\left|\left\{
M \in \GSp_{4}(\Z/\ell\Z): \tr M \equiv t  (\mod \ell)
\right\}
\right|
$$
equals
        \[
                \ell^4\left( (\ell-1)^2(\ell-2) + N_{\ell,t} \right)
                + \ell^4(\ell-1)(\ell^2-1)^2
                + \ell^5(\ell-1)^2(\ell^3-\ell-1)
                + \left\{ \begin{array}{cc}
                                (\ell^7 - \ell^4)(\ell-1) & \; \text{if} \;  t=0, \\
                                0 & \; \text{if} \; t \neq 0.
                        \end{array} \right. 
                \]
  We will now show that
        \[
                N_{\ell,t} = \left\{ \begin{array}{cc}
                                (\ell-1)(\ell-2) & \; \text{if} \;  t = 0, \\
                                (\ell-2)^2 & \; \text{if} \;  t \neq 0, \\
                        \end{array} \right. 
                \]
which in turns confirms  (\ref{eqn:g2clt}).
}}

{\emph{
      Note that
       \[ 
       \left| \left\{
       (x,y,\delta) \in ((\Z/\ell\Z)^\times)^3 : y \neq -\delta \right\}\right|  = (\ell-1)^2(\ell-2)
                 \]
 and
  $$
  N_{\ell,0}
  =
  \left| \left\{
 (x,y,\delta) \in ((\Z/\ell\Z)^\times)^3 : y \neq -\delta, \left(x + \frac{y}{x}\right) \left(1 + \frac{\delta}{y}\right) = 0 
  \right\}\right|
   = (\ell-1)(\ell-2),
  $$
  since, for  any given $x$, the defining conditions of these sets determines $y$  uniquely, provided that $\delta \neq -y$.  
 Putting the two together, we obtain that
 \[  
       \left| \left\{
                 (x,y,\delta) \in ((\Z/\ell\Z)^\times)^3 : y \neq -\delta, \left(x + \frac{y}{x}\right) \left(1 + \frac{\delta}{y}\right) \ne 0 
         \right\}\right| = (\ell-1)(\ell-2)^2.
                 \] 
 Dividing by $\ell-1$, we  deduce that 
  $N_{\ell,t} = (\ell-2)^2$
  for any fixed nonzero $t$;
 this completes the proof of (\ref{eqn:g2clt}).
}} 
\end{remark}

\section{Proof of Theorem \ref{main-thm1}}

For a prime $\ell$ and an integer $t$, define:
\begin{eqnarray*}
\G_{\ell} &:=& \GSp_{2g}(\Z_{\ell});
\\
P\G_{\ell} &:=& \G_{\ell} / Z(\G_{\ell});
\\
\Pi: \G_{\ell} & \longrightarrow  & P\G_{\ell} \; \text{the canonical projection};
\\
G_{\ell} & := & \Im \rho_{A, \ell};
\\
G'_{\ell} &:=& \Pi(G_{\ell});
\\
\C_{\ell}(t) &:=& \left\{M \in \G_{\ell} : \tr M = t\right\};
\\
C_{\ell}(t) &:=& \left\{M \in G_{\ell} : \tr M = t\right\};
\\
C'_{\ell}(0) &:=& \Pi \left(C_{\ell}(0)\right);
\\
r_{C_{\ell}(t)} & :=& \ds\inf_{M \in C_{\ell}(t)} \dim \frac{G_{\ell}}{Z_{G_{\ell}}(M)};
\\
r_{C'_{\ell}(0)} & :=& \ds\inf_{M \in C'_{\ell}(0)} \dim \frac{G'_{\ell}}{Z_{G'_{\ell}}(M)}.
\end{eqnarray*}

We will deduce Theorem \ref{main-thm1} from the following more general result:
\begin{theorem}\label{main-thm1'}
Let $A/\Q$ be a principally polarized abelian variety of dimension $g$ and let $t \in \Z$.
\begin{enumerate}
\item[(i)]
Assume that there exists a prime $\ell$ such that:
\begin{enumerate}
\item
 $G_\ell$ is open in $\G_\ell$;
 \item
 $\exists \; 0 \leq d < \dim \G_{\ell} \; \text{such that} \; \dim_{{\cal{M}}} C_{\ell}(t) \leq d.$
\end{enumerate}
Define
$$
\alpha :=
\frac{
\dim \G_{\ell} - d
}{
\dim \G_{\ell}
}. 
$$
Then
for any $\varepsilon > 0$ we have:
\begin{enumerate}
\item[(i1)]
unconditionally,
\begin{equation}\label{piA-uncond}
\pi_A(x, t) \ll_{A, \ell, \varepsilon}
 \frac{
 x
 }{
 (\log x)^{1 + \alpha - \varepsilon}
 };
\end{equation}
\item[(i2)]
under GRH,
\begin{equation}\label{piA-GRH}
\pi_A(x, t) \ll_{A, \ell, \varepsilon}
 x^{1 - \frac{\alpha}{2} + \varepsilon}.
 \end{equation}
\end{enumerate}
\item[(ii)]
If $t \neq \pm 2g$, assume that there exists a prime $\ell$ such that:
\begin{enumerate}
\item
 $G_\ell$ is open in $\G_\ell$;
 \item
 $\exists \; 0 \leq d < \dim \G_{\ell} \; \text{such that} \; \dim_{{\cal{M}}} C_{\ell}(t) \leq d;$
\item
$v_\ell(\frac{t}{2g}) \neq 0$.
\end{enumerate}
Define
$$
\beta := 
\frac{
\dim \G_{\ell} - d
}{
\dim \G_{\ell} - \frac{r_{C_{\ell}(t)}}{2}
}.
$$
Then $r_{C_{\ell}(t)} > 0$ and the equations (\ref{piA-uncond}) and (\ref{piA-GRH}) hold with $\alpha$ replaced by $\beta$.
\item[(iii)]
If $t=0$,
assume that there exists a prime $\ell$ such that:
\begin{enumerate}
\item
$G_\ell$ is open in $\G_\ell$;
\item
$ \exists \; 0 \leq d < \dim P\G_{\ell} \; \text{such that} \; \dim_{{\cal{M}}} C'_{\ell}(0) \leq d.$
\end{enumerate}
Define
$$
\gamma := 
\frac{
\dim \G_{\ell} -1 - d
}{
\dim \G_{\ell} - 1 - \frac{r_{C'_{\ell}(0)} }{2}
}.
$$
Then $r_{C'_{\ell}(0)} > 0$ and the equations (\ref{piA-uncond}) and (\ref{piA-GRH}) hold with $\alpha$ replaced by $\gamma$.
\end{enumerate}
\end{theorem}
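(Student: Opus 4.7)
The plan is to deduce all three parts from Serre's effective $\ell$-adic Chebotarev framework: Theorem \ref{serre-cheb-thm10} for (i), and the sharper Theorem \ref{serre-cheb-thm12} for (ii) and (iii). Fix a prime $\ell$ satisfying the relevant hypotheses. Expanding (\ref{p-Weil-char}) as $\det(XI_{2g}-\rho_{A,\ell}(\Frob_p)) = X^{2g} - \tr\rho_{A,\ell}(\Frob_p) X^{2g-1}+\ldots$ and comparing with the definition of $a_{1,p}$ gives $a_{1,p}=-\tr\rho_{A,\ell}(\Frob_p)$, so the arithmetic condition $a_{1,p}=t$ is equivalent to $\rho_{A,\ell}(\Frob_p)\in C_\ell(-t)$; after replacing $t$ by $-t$ (each hypothesis---Minkowski-dimension bounds and the condition $v_\ell(t/2g)\neq 0$---is symmetric under sign change), we identify $a_{1,p}=t$ with $\Frob_p\subseteq C_\ell(t)\subseteq G_\ell$. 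Let $L_\ell\subseteq\overline{\Q}$ be the fixed field of $\Ker\rho_{A,\ell}$; by the N\'eron-Ogg-Shafarevich criterion (\ref{ramification}), every prime $p\nmid\ell N_A$ is unramified in $L_\ell/\Q$, and $\pi_A(x,t)$ is, up to $O(1)$, the Chebotarev-type count of such primes whose Frobenius class in $\Gal(L_\ell/\Q)=G_\ell$ lies in $C_\ell(t)$.

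For part (i), the openness of $G_\ell$ in $\G_\ell$ makes $G_\ell$ a compact $\ell$-adic Lie group of dimension $D=\dim\G_\ell=2g^2+g+1$, and $C_\ell(t)$ is a closed conjugation-invariant subset with $\dim_{\mathcal M}C_\ell(t)\le d$ by hypothesis. Direct application of Theorem \ref{serre-cheb-thm10} to $L_\ell/\Q$ with this data gives (\ref{piA-uncond}) and (\ref{piA-GRH}) with exponent $\alpha=(D-d)/D$.

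For part (ii), to invoke Theorem \ref{serre-cheb-thm12} we must check $r_{C_\ell(t)}>0$. A scalar $\mu I_{2g}\in\G_\ell$ has trace $2g\mu$, so it lies in $C_\ell(t)$ iff $\mu=t/(2g)\in\Z_\ell^\times$, i.e., iff $v_\ell(t/(2g))=0$. The hypothesis $v_\ell(t/(2g))\neq 0$ therefore forces $C_\ell(t)\cap Z(\G_\ell)=\emptyset$. Since $G_\ell$ is open in $\G_\ell$, for any non-central $M\in C_\ell(t)$ the algebraic centralizer $Z_{\G_\ell}(M)$ is a proper closed subgroup of strictly smaller dimension, and $Z_{G_\ell}(M)=Z_{\G_\ell}(M)\cap G_\ell$ is open in $Z_{\G_\ell}(M)$ with the same Lie algebra; hence $\dim G_\ell/Z_{G_\ell}(M)>0$ and $r_{C_\ell(t)}>0$. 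Theorem \ref{serre-cheb-thm12} then delivers the bounds with exponent $\beta$. For part (iii), one exploits that $\tr(\mu M)=\mu\tr M$, so $C_\ell(0)$ is stable under the scaling action of $Z(G_\ell)$ and its image $C'_\ell(0)=\Pi(C_\ell(0))$ in $G'_\ell\subseteq P\G_\ell$ is well-defined. Let $L'_\ell\subseteq\overline{\Q}$ be the fixed field of $\Ker(\Pi\circ\rho_{A,\ell})$, so that $\Gal(L'_\ell/\Q)\cong G'_\ell$ has dimension $\dim\G_\ell-1$; for any $p\nmid\ell N_A$ with $a_{1,p}=0$ the image of $\Frob_p$ lies in $C'_\ell(0)$, so $\pi_A(x,0)$ is majorized by the Chebotarev count over $L'_\ell/\Q$. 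Because central scalars have nonzero trace $2g\mu$, $C_\ell(0)\cap Z(\G_\ell)=\emptyset$, and the same argument as in (ii) yields $r_{C'_\ell(0)}>0$; Theorem \ref{serre-cheb-thm12} applied to $L'_\ell/\Q$ with the given Minkowski bound produces the estimate with exponent $\gamma$.

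The analytic heart of the argument is entirely packaged inside Theorems \ref{serre-cheb-thm10} and \ref{serre-cheb-thm12}, and at the level of Theorem \ref{main-thm1'} the Minkowski-dimension input is taken as a hypothesis, so the only genuine work is the translation in the first paragraph and the non-centrality arguments $r_{C_\ell(t)},r_{C'_\ell(0)}>0$. The main obstacle -- not needed here, but required to derive Theorem \ref{main-thm1} from Theorem \ref{main-thm1'} -- will be the combinatorial estimation of $\dim_{\mathcal M}C_\ell(t)$ (and of $\dim_{\mathcal M}C'_\ell(0)$) for general $g$, which is precisely the content of Serre's Appendix \ref{SerreApp1}.
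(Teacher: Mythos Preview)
Your proof is correct and follows essentially the same route as the paper: reduce $\pi_A(x,t)$ to a Chebotarev count for the $\ell$-adic extension cut out by $\rho_{A,\ell}$ (or its projectivization when $t=0$), then feed the Minkowski-dimension hypothesis into Serre's Theorems~\ref{serre-cheb-thm10} and~\ref{serre-cheb-thm12}, with the non-centrality argument supplying $r_{C_\ell(t)},r_{C'_\ell(0)}>0$. Your handling of the sign $a_{1,p}=-\tr\rho_{A,\ell}(\Frob_p)$ is more explicit than the paper's, and your part~(iii) is slightly more compressed (the paper spells out $Z(P\G_\ell)=\{I_{2g}\}$, which is what makes ``the same argument as in (ii)'' go through), but the content is the same.

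One small correction to your closing remark: Appendix~\ref{SerreApp1} is not about bounding $\dim_{\mathcal M}C_\ell(t)$---that bound is immediate, since $C_\ell(t)$ sits inside the trace-$t$ hypersurface and hence has $d\le\dim\G_\ell-1$. What Appendix~\ref{SerreApp1} actually supplies, in the passage from Theorem~\ref{main-thm1'} to Theorem~\ref{main-thm1}, is the \emph{lower} bound on $r_{C_\ell(t)}$ (namely $\dim\G_\ell/Z_{\G_\ell}(M)\ge 4g-4$, and $\ge 4g-2$ when $\tr M=0$), which is what sharpens $\alpha$ to the stated $\beta$ and $\gamma$.
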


\begin{proof}
Throughout the proof we let $x > 0$, to be thought of as approaching $\infty$. 

(i) Observe that, by (\ref{p-Weil-char}), for any rational prime $\ell$ we have
$$
\pi_A(x, t) \leq \pi_{C_{\ell}(t)}(x, L/\Q),
$$
where
$$
L := \overline{\Q}^{\Ker \rho_{A, \ell}}.
$$
It remains to estimate $\pi_{C_{\ell}(t)}(x, L/\Q)$, which we do by 
following the method of \cite[Section 8]{Se81}.

We choose $\ell$ as in the hypothesis of (i). Note that since $G_\ell$ is open in $\G_\ell$, we have
$\dim G_{\ell} = \dim \G_{\ell}$. 
We   apply Theorem \ref{serre-cheb-thm10} to the extension 
$L/\Q$ and the conjugacy set $C_{\ell}(t)$ with 
$D := \dim \G_{\ell}$.

\noindent
(ii) If $t \neq \pm 2g$,  we choose $\ell$ as in the hypothesis of (ii).
As before, $\dim G_{\ell} = \dim \G_{\ell}$. Moreover, 
$$\C_{\ell}(t) \cap Z(\G_{\ell}) = \emptyset,$$ 
for, otherwise,  recalling that
$
Z(\G_\ell) 
=
\{\mu I_{2 g}: \mu \in \Z_{\ell}^{\times}\},
$
we would have that the $\ell$-adic valuation of $\frac{t}{2g}$ satisfies $v_{\ell}\left(\frac{t}{2 g}\right) = 0$,
a contradiction.  
 
In particular, for any $M \in \C_{\ell}(t)$, 
\begin{equation}\label{cent-ii}
Z_{\G_\ell}(M) \subsetneq \G_\ell.
\end{equation}
Centralizers are closed subgroups, hence Lie subgroups, and $Z_{\G_\ell}(M)$ has a well-defined dimension.  Since $\GSp_{2g}$ is connected as an algebraic group, (\ref{cent-ii})
implies that 
$$\dim Z_{\G_\ell}(M) < \dim \G_\ell = \dim G_\ell.$$  

If $M \in C_{\ell}(t)$, then 
$\dim Z_{G_{\ell}}(M) \leq \dim Z_{\G_{\ell}}(M)$
and, by the above,
\begin{eqnarray*}
\dim \frac{G_{\ell}}{Z_{G_{\ell}}(M)}
\geq
\dim \G_{\ell} - \dim Z_{\G_{\ell}}(M) \geq 1.
\end{eqnarray*}
Therefore we can improve upon the result of (i) by applying
Theorem \ref{serre-cheb-thm12} to the extension 
$L/\Q$ and the conjugacy set $C_{\ell}(t)$ with 
$D := \dim \G_{\ell}$. 

\medskip

\noindent
(iii) 
If $t = 0$,  we choose $\ell$ as in the hypothesis of (iii)
and 
with $\hat{\rho}_{A, \ell} := \Pi \circ \rho_{A, \ell}$ we consider
 $$
 L' := \overline{\Q}^{\Ker \hat{\rho}_{A, \ell}},
 $$
 a Galois extension of $\Q$ with Galois group $G'_{\ell}$.
Observing that
 $$
\pi_A(x, 0) \leq \pi_{C'_{\ell}(0)}(x, L'/\Q),
$$
it remains to estimate the right hand side.
 
Since $G_\ell$ is open in $\G_\ell$, we have that $G'_\ell$ is open in $P\G_\ell$ and so
 $\dim G'_{\ell} = \dim P\G_\ell = \dim \G_{\ell} - 1$. 
  Moreover, 
 since $Z(P\G_\ell) = \{I_{2g}\}$, we have
 $$\Pi(\C_{\ell}(0)) \cap Z(P\G_{\ell}) = \emptyset.$$
 
 In particular, as in the proof of part (ii),  for any  $M \in \C_{\ell}(0)$, 
 \begin{equation*}\label{cent-iii}
 Z_{P\G_{\ell}}(\Pi(M)) \subsetneq P\G_{\ell},
 \end{equation*}
thus
$$
\dim Z_{P\G_\ell}(\Pi(M)) 
<
\dim \G_\ell - 1.
$$

If  $M  \in C_{\ell}(0)$,
then 
$\dim Z_{G'_{\ell}}(\Pi(M)) \leq \dim Z_{P\G_{\ell}}(\Pi(M))$
and, by the above,
\begin{eqnarray*}
        \dim \frac{G'_{\ell}}{Z_{G'_{\ell}}(\Pi(M))}
\geq
\dim P\G_{\ell} - \dim Z_{P\G_{\ell}}(\Pi(M)) \geq 1.
\end{eqnarray*}
Therefore we can improve upon the result of (i) by applying
Theorem \ref{serre-cheb-thm12} to the extension 
$L'/\Q$ and the conjugacy set $C'_{\ell}(0)$ with 
$D := \dim \G_{\ell} - 1$. 
\end{proof}

\begin{proof}[Proof of Theorem \ref{main-thm1}.]
In our setting, by the openness assumption on $\Im \rho_A$,
hypothesis (a)  of Theorem \ref{main-thm1'} holds for any prime $\ell$.       
It remains to verify hypothesis (b)  and to compute the values of $\alpha$, $\beta$ and $\gamma$.

To verify hypothesis (b) of either parts (i)  or (ii), observe that 
$\C_{\ell}(t)$ is  a closed subvariety of the algebraic group  $\GSp_{2g}$ and so
 $C_\ell(t)$ has a well-defined dimension strictly smaller than $\dim \G_\ell$.
 The bound applies to the Minkowski dimension $\dim_{\mathcal{M}}C_\ell(t)$ also, by \cite[Theorem 8]{Se81}.  Part (b) follows with $d := \dim \G_\ell - 1$.

 To verify hypothesis (b) of  part (iii), observe that 
$\Pi(\C_{\ell}(0))$ is  a closed subvariety of the algebraic group  $P\G_{\ell}$ and so
$C'_{\ell}(0)$ has a well-defined dimension strictly smaller than $\dim P\G_\ell$.  The bound applies to the Minkowski dimension $\dim_{\mathcal{M}}C'_\ell(0)$ also by \cite[Theorem 8]{Se81}.
 Part (b) follows with $d := \dim \G_\ell - 2$.

Recalling that 
$\dim \GSp_{2g} = 2g^2 + g + 1$, we see that $\alpha = \frac{1}{2 g^2 + g + 1}$.

If $g=1$, then
$r_{C_{\ell}}(t)$ and  $r_{C'_{\ell}}(0)$ are calculated as in \cite[pp.~189--190]{Se81}, giving rise to $\beta = \frac{1}{3}$ and $\gamma = \frac{1}{2}$.
 If $g \geq 2$, then
$r_{C_{\ell}}(t)$ and  $r_{C'_{\ell}}(0)$  are estimated using
 Serre's Theorem in Appendix \ref{SerreApp1}.
  Indeed, by this theorem and Remark 5 that follows its statement, for $M \in C_{\ell}(t)$ with $t$ as in (ii) we have
 $$
 r_{C_{\ell}(t)} =  
\ds\inf_{M \in C_{\ell}(t)} \dim \frac{G_{\ell}}{Z_{G_{\ell}}(M)}
=
 \ds\inf_{M \in \C_{\ell}(t)} \dim \frac{\G_{\ell}}{Z_{\G_{\ell}}(M)}
 \geq 4 g-4,
 $$
 which gives
 $$
 \beta \geq \frac{1}{2 g^2 - g + 3}.
 $$
 To improve upon this bound when $t = 0$, we focus on  estimating $\gamma$
 and use
\begin{equation}
\label{ZPGclaim}
 \dim Z_{P \G_{\ell}} \left(\Pi(M)\right) = \dim Z_{\G_{\ell}} (M) - 1.
\end{equation}

 If $g =2$, we use \eqref{ZPGclaim} and once again the first part of the Theorem in Appendix \ref{SerreApp1} to deduce
 $$
 \gamma 
 \geq 
 \frac{1}{
 (2 g^2 + g + 1)
 -1
 - \frac{4 g - 4}{2}
 }
 =
 \frac{1}{8}.
 $$
 If $g \geq 3$, we use \eqref{ZPGclaim} and the last part of the Theorem in  Appendix \ref{SerreApp1} to deduce
$$
\gamma
\geq
\frac{
1
}{
(2 g^2 + g + 1) - 1 - \frac{4 g - 2}{2}
}
=
\frac{1}{2 g^2 - g + 1}.
$$
 This completes the proof of Theorem \ref{main-thm1}.
 \end{proof}
 
\begin{proof}[Proof of Corollary \ref{cor-nonlac}.]
The proof of Corollary \ref{cor-nonlac} is deduced easily from 
part (i) of Theorem \ref{main-thm1} and the Prime Number Theorem, as follows.
Unconditionally,
\begin{eqnarray*}
\pi(x)
&=&
\#\{p \leq x: p | N_A\}
+
\#\left\{
p \leq x: p \nmid N_A,  |a_{1, p}| \geq (\log p)^{\alpha - \varepsilon}
\right\}
+
\#\left\{
p \leq x: p \nmid N_A,  |a_{1, p}| < (\log p)^{\alpha - \varepsilon}
\right\}
\\
&=&
\#\left\{
p \leq x: p \nmid N_A,  |a_{1, p}| \geq (\log p)^{\alpha - \varepsilon}
\right\}
+
\O_{A}(1)
+
\O\left(
\ds\sum_{
t \in \Z
\atop{|t| < (\log x)^{\alpha - \varepsilon}}
}
\pi_A(x, t)
\right)
\\
&=&
\#\left\{
p \leq x: p \nmid N_A,  |a_{1, p}| \geq (\log p)^{\alpha - \varepsilon}
\right\}
+
\O_{A}(1)
+
\O_{A, \varepsilon} \left(
\frac{
x
}{
(\log x)^{1 + \alpha - \frac{\varepsilon}{2}}
}
\cdot
 (\log x)^{\alpha - \varepsilon}
\right)
\\
&=&
\#\left\{
p \leq x: p \nmid N_A,  |a_{1, p}| \geq (\log p)^{\alpha - \varepsilon}
\right\}
+
\o\left(\pi(x)\right).
\end{eqnarray*}
Under GRH,
\begin{eqnarray*}
\pi(x)
&=&
\#\{p \leq x: p | N_A\}
+
\#\left\{
p \leq x: p \nmid N_A,  |a_{1, p}| \geq p^{\frac{\alpha}{2} - \varepsilon}
\right\}
+
\#\left\{
p \leq x: p \nmid N_A,  |a_{1, p}| <  p^{\frac{\alpha}{2} - \varepsilon}
\right\}
\\
&=&
\#\left\{
p \leq x: p \nmid N_A,  |a_{1, p}| \geq p^{\frac{\alpha}{2} - \varepsilon}
\right\}
+
\O_{A}(1)
+
\O\left(
\ds\sum_{
t \in \Z
\atop{|t| < x^{\frac{\alpha}{2} - \varepsilon}}
}
\pi_A(x, t)
\right)
\\
&=&
\#\left\{
p \leq x: p \nmid N_A,  |a_{1, p}| \geq p^{\frac{\alpha}{2} - \varepsilon}
\right\}
+
\O_{A}(1)
+
\O_{A, \varepsilon} \left(
x^{1 - \frac{\alpha}{2} + \frac{\varepsilon}{2}} 
\cdot
x^{\frac{\alpha}{2} - \varepsilon}
\right)
\\
&=&
\#\left\{
p \leq x: p \nmid N_A,  |a_{1, p}| \geq p^{\frac{\alpha}{2} - \varepsilon}
\right\}
+
\o\left(\pi(x)\right).
\end{eqnarray*}
Note that the uniformity in $t$ of the bounds for $\pi_A(x, t)$ provided by Theorem \ref{main-thm1} was crucial in the above estimates.
\end{proof}

\section{Proof of Theorem \ref{main-thm2}}
\label{thmpfsect}

Let $A/\Q$ be a principally polarized abelian variety of dimension $g$ such  that $\Im \rho_A$ is open in $\GSp_{2g}(\hat{\Z})$.
We will investigate $\nu(a_{1, p})$  via the method of moments, 
with the goal of proving:
\begin{proposition}\label{prop-kth-moment}
Assume GRH. Then
\begin{equation}\label{kth-moment}
\frac{1}{\pi(x)}
\ds\sum_{
p \leq x
\atop{
p \nmid N_A
\atop{a_{1, p} \neq 0}
}
}
\left(
\nu(a_{1, p}) - \log \log x
\right)^k
=
c_k (\log \log x)^{\frac{k}{2}} 
+ \o\left((\log \log x)^{\frac{k}{2}}\right)
\end{equation}
for each integer $k \geq 1$,
where
$$
c_k :=
\left\{
\begin{array}{cc}
\frac{k!}{2^{\frac{k}{2}}  \left(\frac{k}{2}\right)!} & \text{if $k$ even}
\\
0 & \text{if $k$ odd}
\end{array}
\right.
$$
is the $k$-th moment of the standard Gaussian.
\end{proposition}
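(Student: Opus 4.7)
The plan is to follow Billingsley's method of moments \cite{Bi74} for Erd\"os--Kac theorems, substituting classical arithmetic independence with the combination of Lemma \ref{open-torsion-cond}(ii) (Galois independence at distinct primes $\ell \nmid m_A$) and the effective Chebotarev Density Theorem under GRH (Theorem \ref{cheb2}) applied to the division fields $\Q(A[m])/\Q$. Write $\nu(a_{1,p}) = \sum_\ell \mathbf{1}_{\ell \mid a_{1,p}}$ and use the Weil bound (\ref{bound-a1p}) to restrict to $\ell \leq 2g\sqrt{x}$. Primes with $a_{1,p}=0$ contribute negligibly by Theorem \ref{main-thm1}(iii) under GRH.

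Choose a truncation parameter $y = y(x)$ satisfying $\log\log y = \log\log x - O(\log\log\log x)$, e.g.\ $y = x^{1/\log\log x}$, and split $\nu(a_{1,p}) = \nu_y(a_{1,p}) + \nu^y(a_{1,p})$ according to whether the prime divisor is $\leq y$ or $> y$. Set $p_\ell := |C(\ell,0)|/|G(\ell)|$ and $\mu_y := \sum_{\ell \leq y,\, \ell \nmid m_A} p_\ell$, and expand
\begin{equation*}
\sum_{\substack{p \leq x,\; p\nmid N_A \\ a_{1,p}\neq 0}} (\nu_y(a_{1,p}) - \mu_y)^k = \sum_{\ell_1,\ldots,\ell_k \leq y}\; \sum_p\; \prod_{i=1}^{k}(\mathbf{1}_{\ell_i \mid a_{1,p}} - p_{\ell_i}).
\end{equation*}
For each tuple $(\ell_1,\ldots,\ell_k)$ with underlying distinct primes $\ell_{i_1},\ldots,\ell_{i_j}$ all coprime to $m_A$, Lemma \ref{open-torsion-cond}(ii) gives $G(\ell_{i_1}\cdots\ell_{i_j}) = \prod_s \GSp_{2g}(\F_{\ell_{i_s}})$, and Theorem \ref{cheb2} evaluates the inner prime count as $\pi(x)\prod_s p_{\ell_{i_s}} + O(y^{kC(g)} x^{1/2}\log x)$ for some explicit $C(g)$; our choice of $y$ makes the total error over the $y^k$ tuples $O(x^{1/2+o(1)})$, comfortably $o(\pi(x)(\log\log x)^{k/2})$.

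This reduces the calculation of $M_k(x)/\pi(x)$ to the central $k$-th moment of independent Bernoulli variables $X_\ell$ with means $p_\ell$. The standard combinatorial argument then proceeds by partitioning $\{1,\ldots,k\}$ according to coincidences among the $\ell_i$: partitions with any block of size one contribute zero, and the dominant term for even $k$ comes from the $c_k = k!/(2^{k/2}(k/2)!)$ pair-partitions, each contributing $\prod p_{\ell_i}(1-p_{\ell_i}) = \sigma_y^2 / (\text{pairing})$, so the sum is $c_k \sigma_y^k + O(\sigma_y^{k-2})$ where $\sigma_y^2 := \sum_{\ell \leq y,\, \ell \nmid m_A} p_\ell(1-p_\ell)$. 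Equation (\ref{quotient-second}) gives $p_\ell = 1/\ell + O(1/\ell^2)$, hence $\mu_y, \sigma_y^2 = \log\log y + O(1) = \log\log x + O(\log\log\log x)$, and a binomial expansion replaces $\mu_y$ by $\log\log x$ at cost $O(\sigma_y^{k-1}\log\log\log x)$, absorbed in the error.

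The main obstacle is the tail: we must replace $\nu_y$ by $\nu$ at cost $o((\log\log x)^{k/2})$ in the $k$-th moment. For each fixed $\ell > y$, an upper-bound Chebotarev estimate (Theorem \ref{serre-cheb-thm10}) bounds $\#\{p \leq x : \ell \mid a_{1,p}\}$ uniformly by $p_\ell \pi(x)$ plus an admissible error, so $\mathbb{E}[\nu^y(a_{1,p})] \ll \log\log x - \log\log y = O(\log\log\log x)$; controlling higher moments of $\nu^y$ by the same method and combining with Cauchy--Schwarz yields $\sum_p (\nu - \nu_y)^k \ll (\log\log\log x)^k \pi(x) = o((\log\log x)^{k/2} \pi(x))$. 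The delicate bookkeeping lies in this uniformity across the tail primes $\ell > y$ and in the simultaneous calibration of $y$ to accommodate both a manageable Chebotarev error (favouring small $y$) and a negligible tail mean (favouring large $y$); the square-root saving under GRH creates ample room, but the argument must be uniform in $k$ to combine cleanly with Theorem \ref{main-thm2}.
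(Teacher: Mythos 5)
Your overall strategy (method of moments, indicator expansion, Chebotarev for joint divisibility, comparison with independent Bernoullis) matches the paper, but there is a genuine gap in your tail argument, and it stems from your choice of truncation parameter.

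You take $y = x^{1/\log\log x}$ and propose to control the tail $\nu^y(a_{1,p}) = \nu(a_{1,p}) - \nu_y(a_{1,p})$ via ``an upper-bound Chebotarev estimate for each fixed $\ell > y$.'' This cannot work in the relevant range. The GRH-effective Chebotarev bound (Theorem \ref{cheb2}) for the extension $\Q(A[\ell])/\Q$ with conjugacy class $C(\ell,0)$ carries an error term of size $O\bigl(|C(\ell,0)|\, x^{1/2}\log(\ell N_A x)\bigr)$, and $|C(\ell,0)|$ can be as large as $\ell^{2g^2+g}$; this error already dwarfs the expected main term $\pi(x)/\ell$ once $\ell$ exceeds roughly $x^{1/(2(2g^2+g+1))}$. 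Since the Weil bound only forces $\ell \le 2g\sqrt{x}$, almost the entire interval $(y, 2g\sqrt{x}]$ lies beyond the reach of any Chebotarev estimate, so your asserted bound $\mathbb{E}[\nu^y(a_{1,p})] \ll \log\log\log x$ is unsupported, and the claimed $\sum_p (\nu-\nu_y)^k \ll (\log\log\log x)^k \pi(x)$ with it. (Invoking Theorem \ref{serre-cheb-thm10} here also hides an implicit constant depending on $\ell$, so it cannot be summed uniformly over $\ell > y$.)

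The paper sidesteps the tail problem entirely by choosing $y = x^{\delta}$ with a small \emph{fixed} $\delta = \delta(g,k)$ satisfying $0 < \delta < \tfrac{1}{2k(2g^2+g+1)}$. This buys two things simultaneously. First, the Chebotarev error over $j$-tuples of primes $\le y$ is $O_{A,k}(x^{1-\varepsilon})$, which is acceptable. Second, and crucially, the tail is controlled \emph{deterministically} via the elementary observation (equation \eqref{obs-nu}): any $m$ has at most $\log m/\log y$ prime factors exceeding $y$, so
$$|\nu(a_{1,p}) - \nu_y(a_{1,p})| \le \frac{\log|a_{1,p}|}{\delta\log x} \ll_{g,\delta} 1,$$
using the Weil bound $|a_{1,p}| < 2g\sqrt{p} \le 2g\sqrt{x}$. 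Because this tail is a bounded $O_{g,k}(1)$, no Chebotarev input is needed for $\ell > y$ at all; a binomial expansion of $\bigl(\nu_y - \log\log x + O(1)\bigr)^k$ immediately transfers the moment estimate from $\nu_y$ to $\nu$ with an admissible $O_k\bigl((\log\log x)^{(k-1)/2}\bigr)$ error. You should replace your truncation $y = x^{1/\log\log x}$ by a fixed power $y = x^{\delta}$ and use this deterministic tail bound in place of the Chebotarev/Cauchy--Schwarz step.

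One further small remark: the constraint $\delta < \tfrac{1}{2k(2g^2+g+1)}$ is forced by the Chebotarev error, since the moduli $\lcm\{\ell_1,\dots,\ell_j\}$ in the $j$-th moment calculation ($j\le k$) can be as large as $y^{k}$, and one needs $|C(y^k,0)| x^{1/2} \ll x^{1-\varepsilon}$; this also supplies the calibration you were looking for (``favouring small $y$'' vs.\ ``favouring large $y$''), and with the deterministic tail bound the conflict disappears because a small fixed $\delta$ suffices on both counts.
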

\noindent
With this, by adapting to our context the proof of the Erd\"{o}s-Kac Theorem due to P.~Billingsley \cite{Bi74} 
(see also \cite{Bi69} and the references therein for an accessible exposition),
Theorem \ref{main-thm2} is proved.

The core ingredient in our proof is the following 
application of  (\ref{ramification}) - (\ref{degree}), Theorem  \ref{cheb2} (under GRH) and Proposition \ref{hensel}:
for any positive integer $m$ and any $x > 0$ (to be thought of as approaching infinity), we have
\begin{equation}\label{cheb-applic}
\pi_{C(m, 0)}(x, \Q(A[m])/\Q)
=
\frac{
|C(m, 0)|
}{
|G(m)|
}
 \pi(x)
+
\O\left(
|C(m, 0)| x^{\frac{1}{2}} \log (m N_A x)
\right).
\end{equation}

Related to this, remark  that 
by the openness assumption of $\Im \rho_A$ in $\GSp_{2g}(\hat{\Z})$
and by (\ref{quotient-first}) from the proof of part (iii) of Lemma \ref{open-torsion-cond},
we have 
\begin{equation}\label{ellsqeq}
\frac{
|C(\ell, 0)|
}{
|G(\ell)|
}
=
\frac{1}{\ell} + \O\left(\frac{1}{\ell^2}\right)
\end{equation}
for all $\ell \nmid m_A$.
 In particular, for any $y > 0$,
\begin{equation}\label{sum-density}
\ds\sum_{\ell \leq y} \frac{|C(\ell, 0)|}{|G(\ell)|} = \log \log y + \O_A(1),
\end{equation}
and, after using \eqref{ellsqeq} and \eqref{degree}, 
\begin{equation}\label{sum-class}
\ds\sum_{\ell \leq y} |C(\ell, 0)|
\ll
\frac{
y^{2g^2 + g + 1}
}{
\log y
}.
\end{equation}

Crucial to the method is also the following simple observation. 
Let $x> 0$ and $0 < \delta < 1$  be fixed and let  $y := x^{\delta}$.  For any integer $m \geq 1$, we have
\begin{equation}\label{obs-nu}
|\nu(m) - \nu_y(m)| \leq \frac{\log m}{ \delta \log x},
\end{equation}
where
$\nu_y(m)$ denotes the number of distinct prime divisors $\ell \leq y$ of $m$.

We now proceed with the proof of (\ref{kth-moment}).
For each prime $\ell$, we define a random variable $R_{\ell}$ to be  
$1$ with probability $\frac{1}{\ell}$ and $0$ with probability $1 - \frac{1}{\ell}$.
Upon taking $y :=  x^{\delta}$ for some fixed $0 < \delta < 1$ and
 $x \rightarrow \infty$, 
 $
 R(y) := \ds\sum_{\ell \leq y} R_{\ell}
 $ 
 becomes normally distributed with mean and variance each equal to  $\log \log x$;
 by the Central Limit Theorem, for any integer $k \geq 1$
 we have
\begin{equation}\label{model}
\E\left(  (R(y) - \log \log x)^k \right)
=
c_k (\log \log x)^{\frac{k}{2}}
+ \o\left((\log \log x)^{\frac{k}{2}}\right).
\end{equation}
Note that by (\ref{ellsqeq}),  $R_{\ell}$ models the event that $\ell | a_{1, p}$ for some $p$.
Our strategy then is to  prove (\ref{kth-moment}) by comparing 
$\E\left(  (R(y) - \log \log x)^k \right)$
and
$
\frac{1}{\pi(x)}
\ds\sum_{
p \leq x
\atop{
p \nmid N_A
\atop{
a_{1, p} \neq 0
}
}
}
(\nu_y(a_{1, p}) - \log \log x)^k
$
 for each $k \geq 1$.

We fix  $x > 0$ and $k \geq 1$, choose a parameter $\delta  = \delta(g, k)$ such that 
\begin{equation}\label{delta-choice}
0 < \delta < \frac{1}{2 k \left(2g^2 + g + 1\right)},
\end{equation}
and define $y := x^{\delta}$.  In what follows, our $\O$-estimates will reflect the growth of various functions as $x \rightarrow \infty$.

For each $\ell$ and each $p \nmid N_A$, we define
$$
\delta_{\ell}(p) :=
\left\{
\begin{array}{ll}
1 & \text{if $\ell | a_{1, p}$,}
\\
0 & \text{else.}
\end{array}
\right.
$$
Then, for each integer $1 \leq j \leq k$,
upon applying (\ref{cheb-applic}) - (\ref{sum-class}) and ({\ref{model}),  we obtain
\begin{eqnarray*}
&&
\ds\sum_{
p \leq x
\atop{
p \nmid N_A
\atop{
a_{1, p} \neq 0
}
}
}
\nu_y(a_{1, p})^j
\\
&=&
\ds\sum_{
p \leq x
\atop{
p \nmid N_A
\atop{
a_{1, p} \neq 0
}
}
}
\ds\sum_{
\ell_1, \ldots, \ell_j \leq y
}
\delta_{\ell_1}(p) \ldots \delta_{\ell_j}(p)
\\
&=&
\ds\sum_{
\ell_1, \ldots, \ell_j \leq y
}
\#\left\{
p \leq x: p \nmid N_A, a_{1, p} \neq 0, \lcm\{\ell_1, \ldots, \ell_j\} | a_{1, p}
\right\}
\\
&=&
\ds\sum_{
\ell_1, \ldots, \ell_j \leq y
}
\#\left\{
p \leq x: p \nmid N_A,  \lcm\{\ell_1, \ldots, \ell_j\} | a_{1, p}
\right\}
+
\O(\pi(y)^j  \; \pi_A(x, 0))
\\
&=&
\ds\sum_{
\ell_1, \ldots, \ell_j \leq y
}
\#\left\{
p \leq x: p \nmid  \lcm\{\ell_1, \ldots, \ell_j\} N_A,  \lcm\{\ell_1, \ldots, \ell_j\} | a_{1, p}
\right\}
\\
&&
+
\ds\sum_{
\ell_1, \ldots, \ell_j \leq y
}
\#\left\{
p \leq x: p \nmid N_A,  p | \lcm\{\ell_1, \ldots, \ell_j\} | a_{1, p}
\right\}
+
\O(\pi(y)^j  \; \pi_A(x, 0))
\\
&=&
\ds\sum_{
\ell_1, \ldots, \ell_j \leq y
}
\pi_{C(\lcm\{\ell_1, \ldots, \ell_j\} , 0)} (x, \Q(A[\lcm\{\ell_1, \ldots, \ell_j\} ])/\Q)
+
\O\left(j \pi(y)^j \right)
+
\O\left(\pi_A(x, 0) \; \pi(y)^j \right)
\\
&=&
\E(R(y)^j) \;  \pi(x)
+
\O_j\left(\pi(x) \; (\log \log y)^{j-1}
\right)
+
\O_{A, j}\left(
\frac{y^{j (2 g^2 + g + 1)} }{(\log y)^j}
x^{\frac{1}{2}} \log x
\right)
+
\O_j\left(\pi(y)^j \pi_A(x, 0)\right).
\end{eqnarray*}
By  the binomial theorem and the above, we deduce
\begin{eqnarray*}
&&
\ds\sum_{
p \leq x
\atop{
p \nmid N_A
\atop{
a_{1, p} \neq 0
}
}
}
(\nu_y(a_{1, p}) - \log \log x)^k
\\
&=&
\ds\sum_{0 \leq j \leq k}
{k \choose j} (- \log \log x)^{k - j}
\ds\sum_{
p \leq x
\atop{
p \nmid N_A
\atop{
a_{1, p} \neq 0
}
}
}
\nu_y(a_{1, p})^j
\\
&=&
\ds\sum_{0 \leq j \leq k}
{k \choose j}
(- \log \log x)^{k-j} \;
\E(R(y)^j)
\;
\pi(x)
\\
&+&
\O_{A, k}\left(
y^{k (2g^2 + g + 1)} x^{\frac{1}{2}} (\log x) \; (\log \log x)^k
\right)
+
\O_k\left(
\pi(y)^k \;  \pi_A(x, 0) \; (\log \log x)^k
\right).
\end{eqnarray*}
Recalling the choice of $\delta$ given in (\ref{delta-choice}) and using part (i2) of Theorem \ref{main-thm1}, 
we see that the two $\O$-terms above become 
$\O_{\varepsilon, A, k}\left(
x^{1 - \varepsilon} (\log x) (\log \log x)^k
\right)$,
which is
$\o\left(\pi(x) (\log \log x)^{\frac{k}{2}}\right)$. 
Then, upon applying the binomial theorem once again in order to rewrite the first term, we deduce
\begin{eqnarray}\label{kth-moment-y}
\frac{1}{\pi(x)}
\ds\sum_{
p \leq x
\atop{
p \nmid N_A
\atop{
a_{1, p} \neq 0
}
}
}
(\nu_y(a_{1, p}) - \log \log x)^k
\sim
\E( (R(y) - \log \log x)^k).
\end{eqnarray}

Finally, recalling (\ref{bound-a1p}) and (\ref{obs-nu}) and applying (\ref{model}) and (\ref{kth-moment-y}) several times,  we deduce
\begin{eqnarray*}
&&
\frac{1}{\pi(x)}
\ds\sum_{
p \leq x
\atop{
p \nmid N_A
\atop{a_{1, p} \neq 0}
}
}
\left(
\nu(a_{1, p}) - \log \log x
\right)^k
\\
&=&
\frac{1}{\pi(x)}
\ds\sum_{
p \leq x
\atop{
p \nmid N_A
\atop{a_{1, p} \neq 0}
}
}
\left(
\nu_y(a_{1, p}) - \log \log x
+
\O\left(\frac{\log |a_{1, p}|}{\delta \log x} \right)
\right)^k
\\
&=&
\frac{1}{\pi(x)}
\ds\sum_{
p \leq x
\atop{
p \nmid N_A
\atop{a_{1, p} \neq 0}
}
}
\left(
\nu_y(a_{1, p}) - \log \log x
\right)^k
+
\O_{k, g, \delta}\left(
\ds\sum_{0 \leq j \leq k-1}
\frac{1}{\pi(x)}
\ds\sum_{
p \leq x
\atop{
p \nmid N_A
\atop{a_{1, p} \neq 0}
}
}
\left|\nu_y(a_{1, p}) - \log \log x
\right|^j
\right)
\\
&=&
\frac{1}{\pi(x)}
\ds\sum_{
p \leq x
\atop{
p \nmid N_A
\atop{a_{1, p} \neq 0}
}
}
\left(
\nu_y(a_{1, p}) - \log \log x
\right)^k
+
\O_k\left(
(\log \log x)^{\frac{k-1}{2}}
\right)
\\
&=&
c_k (\log \log x)^{\frac{k}{2}}
+
\o\left(
(\log \log x)^{\frac{k}{2}}
\right).
\end{eqnarray*}
This completes the proof of Theorem \ref{main-thm2}.

\begin{remark}
{\emph{
The first and second moments of $\nu(a_{1, p})$ may be estimated directly, without any comparison with the model defined by $R_{\ell}$.
The strategy originates in P.~Tur\'{a}n's proof of the Hardy-Ramanujan Theorem, \cite{Tu},  and is summarized below.
}}
\end{remark}

We choose $0 < \delta < \frac{1}{8 g^2 + 4 g + 1}$ and let $y = x^{\delta}$. Then, proceeding as in the proof of Theorem \ref{main-thm2},
but without the model $R_{\ell}$, we obtain

\begin{eqnarray}\label{loglogx}
&&
\ds\sum_{
p \leq x
\atop{
p \nmid N_A
\atop{
a_{1, p} \neq 0
}
}
}
\left(
\nu(a_{1, p}) - \log \log x
\right)^2
\nonumber
\\
&=&
\ds\sum_{
p \leq x
\atop{
p \nmid N_A
\atop{
a_{1, p} \neq 0
}
}
}
\nu(a_{1, p})^2
-
2 (\log \log x)
\ds\sum_{
p \leq x
\atop{
p \nmid N_A
\atop{
a_{1, p} \neq 0
}
}
}
\nu(a_{1, p})
+
(\log \log x)^2 \#\{p \leq x: p \nmid N_A, a_{1, p} \neq 0 \}
\nonumber
\\
&=&
\ds\sum_{
p \leq x
\atop{
p \nmid N_A
\atop{
a_{1, p} \neq 0
}
}
}
\left(
\nu_y(a_{1, p}) + \O_{A}(1)
\right)^2
\nonumber
\\
&&
-
2 (\log \log x)
\ds\sum_{
p \leq x
\atop{
p \nmid N_A
\atop{
a_{1, p} \neq 0
}
}
}
\left(
\nu_y(a_{1, p}) + \O_{A}(1)
\right)
+
\pi(x) (\log \log x)^2
+
\O(\pi_A(x, 0) \; (\log \log x)^2)
\nonumber
\\
&=&
\ds\sum_{
p \leq x
\atop{
p \nmid N_A
\atop{
a_{1, p} \neq 0
}
}
}
\nu_y(a_{1, p})^2
-
2 (\log \log x) \ds\sum_{
p \leq x
\atop{
p \nmid N_A
\atop{
a_{1, p} \neq 0
}
}
}
\nu_y(a_{1, p})
+
\pi(x) (\log \log x)^2
\nonumber
\\
&&
+
\O\left(
\ds\sum_{
p \leq x
\atop{
p \nmid N_A
\atop{
a_{1, p} \neq 0
}
}
}
\nu_y(a_{1, p})
\right)
+
\O(\pi(x) \log \log x)
+
\O(\pi_A(x, 0) \; (\log \log x)^2)
\nonumber
\\
&=&
\ds\sum_{
\ell_1, \ell_2 \leq y
\atop{
\ell_1 \neq \ell_2
}
}
\frac{|C(\ell_1 \ell_2, 0)|}{|G(\ell_1 \ell_2)|}
\pi(x)
+
\O_A\left(
\ds\sum_{\ell_1, \ell_2 \leq y} |C(\ell_1 \ell_2, 0)|
x^{\frac{1}{2}}
\log x
\right)
- 2 (\log \log x) \ds\sum_{\ell \leq y} \frac{|C(\ell, 0)|}{|G(\ell)|} \pi(x)
\nonumber
\\
&&
+
\O_A\left(
\ds\sum_{\ell \leq y} |C(\ell, 0)| x^{\frac{1}{2}} (\log x) (\log \log x)
\right)
+ 
\pi(x) (\log \log x)^2
+
\O_A\left(
\ds\sum_{\ell \leq y} \frac{|C(\ell, 0)|}{|G(\ell)|} \pi(x)
\right)
\nonumber
\\
&&
+
\O_A\left(
\ds\sum_{\ell \leq y} |C(\ell, 0)| x^{\frac{1}{2}} \log x
\right)
+
\O(\pi(x) \log \log x)
+
\O(\pi_A(x, 0) \; (\log \log x)^2)
\nonumber
\\
&=&
\pi(x)(\log \log x)^2
+
\O_{A}\left(
\frac{  x^{2 \delta (2 g^2 + g + 1)}   }{\log x}
x^{\frac{1}{2}}
\right)
-  \; 
2 \pi(x) (\log \log x)^2
+
\O\left(
x^{\delta(2 g^2 + g + 1)} \;  x^{\frac{1}{2}} \log \log x
\right)
\nonumber
\\
&&
+ \; 
\pi(x) (\log \log x)^2
+
\O_A\left(
\pi(x) \log \log x
\right)
+
\O_A\left(
x^{\delta(2 g^2 + g + 1)} \;  x^{\frac{1}{2}}  \log x
\right)
+
\O_A\left(
\pi_A(x, 0) (\log \log x)^2
\right)
\nonumber
\\
&=&
\O_{A}\left(
\pi(x) \log \log x
\right).
\end{eqnarray}

Note that the cancellation of the $\pi(x) (\log \log x)^2$ terms is essential and that the choice of $\delta$ ensures that the largest 
emerging $\O$-term  depending  on $y$, namely 
$\O_{A}\left(
\frac{  x^{2 \delta (2 g^2 + g + 1)}   }{\log x}
x^{\frac{1}{2}}
\right)$,
is 
sufficiently small;
precisely, it is $\ll_A \pi(x) \ll_A \pi(x) \log \log x$.

\begin{remark}
{\emph{
That $\nu(a_{1, p})$ has normal order $\log \log p$ can be deduced easily from the second moment estimate (\ref{loglogx}). In particular, this is an immediate consequence of the following variation of (\ref{loglogx}):
$$
\ds\sum_{
p \leq x
\atop{
p \nmid N_A
\atop{
a_{1, p} \neq 0
}
}
}
\left(
\nu(a_{1, p}) - \log \log p
\right)^2
\ll_A
\pi(x) \log \log x.
$$
In turn, this is obtained by remarking that
\begin{eqnarray*}
\ds\sum_{
p \leq x
\atop{
p \nmid N_A
\atop{
a_{1, p} \neq 0
}
}
}
\left(
\nu(a_{1, p}) - \log \log p
\right)^2
&\ll&
\ds\sum_{
p \leq x
\atop{
p \nmid N_A
\atop{
a_{1, p} \neq 0
}
}
}
\left(
\nu(a_{1, p}) - \log \log x
\right)^2
+
\ds\sum_{
p \leq x
\atop{
p \nmid N_A
\atop{
a_{1, p} \neq 0
}
}
}
\left(
\log \frac{\log x}{\log p}
\right)^2,
\end{eqnarray*}
 using (\ref{loglogx}) for the first sum
 and
splitting the last sum over $p$ into a sum over
$p \leq \sqrt{x}$ and one over $\sqrt{x} < p \leq x$, followed by elementary estimates.
}}
\end{remark}

\begin{remark}
{\emph{
The normal order of $\nu(a_{1, p})$ may also be obtained via the ubiquitous large sieve; see   \cite[Proposition 2.15]{Ko} for generalities related to such works.
Moreover, the $k$-th moments
(\ref{kth-moment})
may  be estimated more precisely via sieve methods by applying the general result   \cite[Proposition 3]{GrSo}. 
}}
\end{remark}

\section{Heuristic reasoning for Conjecture \ref{LT-dimg}}

We devote this section to arguing heuristically towards Conjecture \ref{LT-dimg}.
Our main setting will be that of  a principally polarized abelian variety $A/\Q$ of dimension $g$
for which $\Im \rho_A$ is open in $\GSp_{2g}(\hat{\Z})$ and which satisfies
the Equidistribution Assumption.
In particular, the function $\Phi$ introduced in Section \ref{introsect} is bounded, continuous, and nonzero on $(-1, 1)$; 
this was proved in more than one way in email communication between 
N.~Katz \cite{Ka15} and J-P.~Serre; in Appendix \ref{SerreApp2} we include 
a letter from Serre to Katz that contains such a proof.

\begin{definition}\label{def-fp}
For each integer $m \geq 1$ and prime $p$, define $c_{p,m} \in (0, \infty)$ by
$$
c_{p, m} = 
\frac{|G(m)|}{m 
\ds\sum_{\tau \in \Z \atop{|\tau| < 2 g \sqrt{p}}} 
\Phi\left(\frac{\tau}{2 g \sqrt{p}}\right)  |C(m, \tau)|}
$$
and define the function
$$
f_p^{(m)} :  \Z \longrightarrow [0, \infty),
$$

$$
f_p^{(m)}(\tau) :=
\left\{
\begin{array}{cc}
        \Phi \left(\frac{\tau}{2 g \sqrt{p}}\right) \cdot \frac{m |C(m, \tau)|}{|G(m)|} \cdot c_{p,m} \; \; 
      &  \text{if $|\tau| < 2 g \sqrt{p}$,}
\\
0  & \; \text{else.}
\end{array}
\right.
$$
\end{definition}

Note that 
$$
\ds\sum_{\tau \in \Z} f_p^{(m)}(\tau) = 1.
$$

\begin{lemma}\label{ST-a1p}
For all integers $m \geq 1$ and $\tau_0 \in \Z$  we have
$$
\ds\lim_{p \rightarrow \infty}
\frac{m}{2 g \sqrt{p}} 
\ds\sum_{
\tau \in \Z
\atop{
|\tau| < 2 g \sqrt{p}
\atop{
\tau \equiv \tau_0 (\mod m)
}
}
}
\Phi \left(\frac{\tau}{2 g \sqrt{p}}\right)
=
1.
$$
\end{lemma}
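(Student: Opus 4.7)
The plan is to recognize the sum as a Riemann sum for $\int_{-1}^{1}\Phi(u)\,du$ and to use the fact that $\Phi$ is a probability density on $[-1,1]$ to conclude.

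First, I would observe that $\Phi$, being the density function associated to the equidistribution of $\frac{a_{1,p}}{2g\sqrt{p}}$ on $[-1,1]$, satisfies
\[
\int_{-1}^{1}\Phi(u)\,du = 1,
\]
since the equidistribution statement applied to $I = [-1,1]$ gives that the proportion of primes with $\frac{a_{1,p}}{2g\sqrt{p}} \in [-1,1]$ tends to $1$ (by the Weil bound \eqref{bound-a1p}). Moreover, $\Phi$ is continuous on the compact interval $[-1,1]$, hence bounded and uniformly continuous there, and in particular Riemann integrable.

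Next, I would parametrize the summation. Fix a representative $\tau_0' \in \{0,1,\ldots,m-1\}$ with $\tau_0' \equiv \tau_0 \pmod{m}$, and write $\tau = \tau_0' + mk$ as $k$ ranges over integers with $|\tau_0' + mk| < 2g\sqrt{p}$. Setting
\[
u_k := \frac{\tau_0' + mk}{2g\sqrt{p}}, \qquad \Delta u := \frac{m}{2g\sqrt{p}},
\]
the points $u_k$ are equally spaced with step $\Delta u$ and, for $p$ large, fill the interval $(-1,1)$ with spacing $\Delta u \to 0$. The quantity we wish to evaluate is exactly
\[
\frac{m}{2g\sqrt{p}}\sum_{\tau}\Phi\!\left(\tfrac{\tau}{2g\sqrt{p}}\right) = \sum_{k}\Phi(u_k)\,\Delta u,
\]
which is a Riemann sum for $\int_{-1}^{1}\Phi(u)\,du$.

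By uniform continuity of $\Phi$ on $[-1,1]$, this Riemann sum converges to $\int_{-1}^{1}\Phi(u)\,du = 1$ as $p \to \infty$, completing the proof. The only minor subtlety is handling the two boundary intervals of length $<\Delta u$ where $|u_k|$ may be slightly less than $1$: since $\Phi$ is bounded on $[-1,1]$ by some constant $M$, these contribute an error of at most $2M\,\Delta u = O(p^{-1/2})$, which is negligible. I do not foresee any substantive obstacle; the statement is essentially a Riemann-sum reformulation of the normalization $\int_{-1}^{1}\Phi = 1$.
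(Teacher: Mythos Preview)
Your proof is correct and follows exactly the same approach as the paper: the paper's own proof simply observes that the expression is a Riemann sum approximation of $\int_{-1}^{1}\Phi(\tau)\,d\tau = 1$ and refers to \cite[pp.~31--32]{LaTr} for details. You have supplied precisely those details (the parametrization by arithmetic progression, uniform continuity, and the boundary estimate), so there is nothing to add.
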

\begin{proof}
This follows by viewing the expression inside the limit as a Riemann sum approximation of the integral
$\ds\int_{-1}^{1} \Phi (\tau) \; d \tau = 1$.
For more details, 
 see \cite[pp.  31--32]{LaTr}.
\end{proof}

\begin{lemma}\label{cp-g}
For all integers $m \geq 1$ we have
$$
\ds\lim_{p \rightarrow \infty} {2 g \sqrt{p} \;  c_{p, m}} = 1.
$$
\end{lemma}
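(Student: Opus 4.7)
The plan is to unwind the definition of $c_{p,m}$ and reduce the statement to a direct application of Lemma \ref{ST-a1p}. Concretely, writing out $c_{p,m}$ shows that the assertion $2g\sqrt{p}\, c_{p,m} \to 1$ is equivalent to
\[
\lim_{p \to \infty}
\frac{m}{2 g \sqrt{p}\, |G(m)|}
\sum_{\substack{\tau \in \Z \\ |\tau| < 2 g \sqrt{p}}}
\Phi\!\left(\tfrac{\tau}{2 g \sqrt{p}}\right) |C(m,\tau)|
= 1,
\]
so my goal is to identify the limit of the sum on the left with $|G(m)|$.

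The key observation is that $|C(m,\tau)|$ depends only on the residue class of $\tau$ modulo $m$, since $C(m,\tau)$ is defined by the congruence $\operatorname{tr} M \equiv \tau \pmod{m}$. Thus I would partition the inner sum according to $\tau_0 := \tau \pmod m$, getting
\[
\sum_{\substack{\tau \in \Z \\ |\tau| < 2 g \sqrt{p}}}
\Phi\!\left(\tfrac{\tau}{2 g \sqrt{p}}\right) |C(m,\tau)|
=
\sum_{\tau_0 \in \Z/m\Z}
|C(m, \tau_0)|
\sum_{\substack{\tau \in \Z,\; |\tau| < 2 g \sqrt{p} \\ \tau \equiv \tau_0\, (\mathrm{mod}\, m)}}
\Phi\!\left(\tfrac{\tau}{2 g \sqrt{p}}\right).
\]
Multiplying by $m/(2g\sqrt p)$ and applying Lemma \ref{ST-a1p} inside each of the $m$ residue classes, each inner factor tends to $1$ as $p\to\infty$. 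Since the outer sum has only finitely many (namely $m$) terms, one can pass to the limit termwise, obtaining
\[
\lim_{p\to\infty}
\frac{m}{2g\sqrt p}
\sum_{\substack{\tau \in \Z \\ |\tau| < 2 g \sqrt{p}}}
\Phi\!\left(\tfrac{\tau}{2 g \sqrt{p}}\right) |C(m,\tau)|
= \sum_{\tau_0 \in \Z/m\Z} |C(m,\tau_0)| = |G(m)|,
\]
where the last equality uses that the sets $C(m,\tau_0)$ partition $G(m)$ as $\tau_0$ ranges over $\Z/m\Z$. Dividing through by $|G(m)|$ gives the desired limit.

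There is essentially no serious obstacle here: the whole argument reduces to Lemma \ref{ST-a1p}, and the only minor point to notice is that $|C(m,\tau)|$ is a class function of $\tau \bmod m$, which lets one extract it from the Riemann-sum approximation. The interchange of limit and sum is trivial because the $\tau_0$-sum is finite, and boundedness of $\Phi$ on $[-1,1]$ (recorded at the start of Section 5) ensures that the contribution of $\tau$ with $|\tau/(2g\sqrt p)|$ close to $1$ does not create any pathology.
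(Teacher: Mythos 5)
Your proposal is correct and follows essentially the same route as the paper: unwind the definition of $c_{p,m}$, split the inner sum over $\tau$ by residue class $\tau_0$ modulo $m$, pull out the class-function $|C(m,\tau_0)|$, apply Lemma \ref{ST-a1p} termwise in each of the finitely many residue classes, and observe that $\sum_{\tau_0}|C(m,\tau_0)| = |G(m)|$.
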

\begin{proof}
By the definition of $c_{p, m}$ and Lemma \ref{ST-a1p},
\begin{eqnarray*}
\ds\lim_{p \rightarrow \infty}
\frac{1}{2 g \sqrt{p} \; c_{p, m}}
&=&
\ds\lim_{p \rightarrow \infty}
\frac{1}{2 g \sqrt{p}}
\ds\sum_{\tau_0 =0}^{m-1}
 \ds\sum_{\tau \in \Z
  \atop{|\tau| < 2 g \sqrt{p}
  \atop{\tau \equiv \tau_0 (\mod m)}
  }
  } 
 \Phi \left(\frac{\tau}{2 g \sqrt{p}}\right) \frac{m \; |C(m, \tau)|}{|G(m)|}
 \\
 &=&
\ds\lim_{p \rightarrow \infty}
\ds\sum_{\tau_0 =0}^{m-1}
 \frac{|C(m, \tau_0)|}{|G(m)|}
 \left(
 \frac{m}{2 g \sqrt{p}}
 \ds\sum_{\tau \in \Z
  \atop{|\tau| < 2 g \sqrt{p}
  \atop{\tau \equiv \tau_0 (\mod m)}
  }
  } 
 \Phi \left(\frac{\tau}{2 g \sqrt{p}}\right) 
 \right)
 \\
 &=&
\ds\sum_{\tau_0 =0}^{m-1}
 \frac{|C(m, \tau_0)|}{|G(m)|}
 \\
 &=&
 1.
\end{eqnarray*}
\end{proof}

 Now let us fix $t \in \Z$ and 
assume that
  $\ds\lim_{m \; \widetilde{\rightarrow} \; \infty} f_p^{(m)}(t)$
models the likelihood of the event  $a_{1, p} = t$, as guided by the Chebotarev law for all $m$-division fields and by  the behaviour of $\frac{a_{1,p}}{2g\sqrt{p}}$ in the interval $(-1,1)$.
Then, recalling part (iv) of Lemma \ref{open-torsion-cond} and  Lemma \ref{cp-g},
we  reason \emph{heuristically} as follows:
\begin{align*}
\#\{p &\leq x : p \nmid N_A, a_{1, p} = t\} \\
&\approx
\ds\lim_{m \; \widetilde{\rightarrow} \; \infty}
\ds\sum_{p \leq x} f_p^{(m)}(t)
\\
&=
\ds\lim_{m \; \widetilde{\rightarrow} \; \infty}
\ds\sum_{p \leq x}
\Phi \left(\frac{t}{2 g \sqrt{p}}\right)
\cdot
\frac{m |C(m, t)|}{|G(m)|} \cdot c_{p,m}
\\
&\approx 
\left(
\ds\lim_{m \; \widetilde{\rightarrow} \; \infty}
\frac{m |C(m, t)|}{|G(m)|}
\right)
\ds\sum_{p \leq x}
\Phi \left(\frac{t}{2 g \sqrt{p}}\right)
\cdot
\frac{1}{2 g \sqrt{p}}
\\
&=
\left(
\lim_{m \; {\widetilde{\rightarrow}}  \;  \infty} F_t(m)
\right)
\;
\ds\sum_{p \leq x}
\Phi \left(\frac{t}{2 g \sqrt{p}}\right)
\cdot
\frac{1}{2 g \sqrt{p}}.
\end{align*}
Here, the symbol $\approx$ means equality deduced purely heuristically. 
The last line is simply notation, as introduced in Section 2.2.

To understand the growth of  the last sum, we  use the properties of the function $\Phi$. For any $\varepsilon > 0$,
by the continuity of $\Phi$ at $0$, there exists a $\delta > 0$ such that
\begin{equation}\label{continuity}
\left|\frac{t}{2 g \sqrt{p}}\right|  < \delta \; \Rightarrow \; 
\left|
\Phi \left(\frac{t}{2 g \sqrt{p}}\right)
-
\Phi (0)
\right|
< \varepsilon.
\end{equation}
We thus split the sum over $p \leq x$ according to the above $\delta$-interval. By the boundedness of 
$\Phi$, we obtain
$$
\left|
\ds\sum_{p < \frac{t^2}{4 g^2 \delta^2} }
\left(
\Phi\left(\frac{t}{2 g \sqrt{p}}\right)
-
\Phi(0)
\right)
\frac{1}{2  \sqrt{p}}
\right|
\ll_{t, \varepsilon, g} 1.
$$
By (\ref{continuity}) and by noting that $\ds\sum_{p \leq x} \frac{1}{2 \sqrt{p}} \sim \frac{\sqrt{x}}{\log x}$,
we obtain
$$
\left|
\ds\sum_{\frac{t^2}{4 g^2 \delta^2}  < p \leq x}
\left(
\Phi\left(\frac{t}{2 g \sqrt{p}}\right)
-
\Phi(0)
\right)
\frac{1}{2 \sqrt{p}}
\right|
\ll
\frac{\varepsilon  \sqrt{x} }{\log x}.
$$
Taking $\varepsilon \rightarrow 0$ and returning to our heuristics, 
we are led to the possible prediction that
\begin{equation}\label{precise-conj}
\#\{p \leq x: p \nmid N_A, a_{1, p} = t\}
\sim
\frac{\Phi(0)}{g}
\cdot
\lim_{m \; {\widetilde{\rightarrow}}  \;  \infty} F_t(m)
\cdot
\frac{\sqrt{x}}{\log x}.
\end{equation}

When $t \neq 0$, we proved 
in parts (iii) and  (iv) of  Lemma \ref{open-torsion-cond} that
the limit over $m \; \widetilde{\rightarrow} \; \infty$ exists and equals an 
 infinite product;  in this case,  we conjecture
that

\begin{multline}\label{precise-conj-bis}
\#\{p \leq x: p \nmid N_A, a_{1, p} = t\}
\sim \\
\frac{\Phi(0)}{g}
\cdot
\frac{
m_{A, t}
|C(m_{A, t}, t)|
}{|G(m_{A, t})|}
\cdot
\ds\prod_{\ell \nmid m_A}
\frac{
\ell^{v_\ell(t)+1}
\;
|
\{M \in \GSp_{2g}(\Z/\ell^{v_\ell(t)+1} \Z) : \tr M \equiv t (\mod \ell^{v_\ell(t)+1})\}
|
}{ |\GSp_{2g}(\Z/\ell^{v_\ell(t)+1} \Z)| }
\cdot
\frac{\sqrt{x}}{\log x},
\end{multline}
where we recall
$$
m_{A, t} =
m_A
\ds\prod_{\ell | m_A} \ell^{v_{\ell}(t)}.
$$
When $t = 0$ and $g = 1$, 
the limit over $m \; \widetilde{\rightarrow} \; \infty$ exists and equals an 
infinite product by \cite[Lemma 2 p.~34]{LaTr}; see Remark  \ref{g=1} below.
When $t = 0$ and $g \geq 2$, 
we are currently unable to make a similar statement and relegate such a study to future work.

\bigskip

We conclude this section with several remarks about the above conjecture.

\begin{remark}\label{g=1}
{\emph{
Assume $g = 1$ and $\End_{\overline{\Q}}(E) \simeq \Z$.
Then  $\Im \rho_A$ is open in $\GSp_{2g}(\hat{\Z})$
(see 
\cite{Se72}) 
and  the Equidistribution Assumption holds 
(see \cite{BLGeHaTa}, 
\cite{ClHaTa},  and \cite{Cl}). 
In this case, $\Phi(x) = \frac{2}{ \pi} \sqrt{1 - x^2}$
and
(\ref{precise-conj}) 
 coincides with the formulation in (\ref{LT-dim1}) of the Lang-Trotter Conjecture on Frobenius traces of  \cite{LaTr}.
 Combining this with the formula
 $$
|\GL_2(\Z/\ell \Z)| = \ell (\ell-1)\left(\ell^2 - 1\right)
 $$
 and with
 \cite[Lemma 2 p. 34]{LaTr},
 we obtain an equivalent reformulation of (\ref{precise-conj}):
 \\
 \begin{equation}\label{precise-conj-1}
                 \pi_A(x,t) \sim
                \frac{2}{\pi}
\cdot
\frac{m_A |C(m_A, t)|}{|G(m_A)|}
\cdot
\ds\prod_{\ell \nmid m_A \atop{ \ell \mid t} }
\frac{\ell^2}{ \ell^2-1}
\cdot
\ds\prod_{\ell \nmid tm_A}
\frac{\ell \left(\ell^2 - \ell - 1\right)}{(\ell + 1) (\ell - 1)^2}
\cdot 
\frac{\sqrt{x}}{\log x}.
 \end{equation}
 \\
 }}
\end{remark}

 \begin{remark}\label{Phi-g2} 
 {\emph{
Assume $g=2$ and $\End_{\overline{\Q}}(A) \simeq \Z$.
Then 
$\Im \rho_A$ is open in $\GSp_{2g}(\hat{\Z})$
(\cite{Se86}, \cite{Se86bis})
and the Sato-Tate group of $A$ is 
$\USp(4)$ (\cite[Theorem~4.3]{FiKeRoSu}),
while 
the Equidistribution Assumption is an open question. 
The function  $\Phi(\cdot)$  may  be calculated explicitly using the Weyl integration formula as in \cite{KeSu}.
In particular, this calculation leads to the value
$$
\Phi(0) = 
\frac{256}{15 \pi^2}.
$$
We explain the calculation of $\Phi(0)$ here briefly.
Let
 $
L_p(A, T) := T^{4} P_{A, p} \left(\frac{1}{T}\right)
$
be the $p$-Euler factor in the $L$-function of $A$
and let  
$
\bar{L}_p(A, T) 
=
L_p\left(A, \frac{T}{\sqrt{p}}\right)
$
be its normalization.
Let
$$
S :=
\left\{
(x_1, x_2) \in \R^2 :
x_2 \geq 2 x_1 - 2, 
x_2 \geq - 2 x_1 - 2, 
x_2 \leq \frac{x_1^2}{4} + 2
\right\}
$$
and
let $R(x_1)$ be the defining interval of $x_2$ imposed by the constraints of $S$.
Recalling
that  the  Sato-Tate group associated to $A$ is 
$\USp(4)$,
  the conjectured joint density function of the normalized coefficients
$\bar{a}_{1, p}$ and $\bar{a}_{2, p}$
 is 
$$
\frac{1}{4 \pi^2} \sqrt{\max\{\rho(\bar{a}_{1, p}, \bar{a}_{2, p}), 0\}},
$$
where
$$
\rho(x_1, x_2)
:=
\left(
x_1^2 - 4 x_2 + 8
\right)
\left(
x_2 - 2 x_1 + 2
\right)
\left(
x_2 + 2 x_1 + 2
\right),
$$
with support in the region  $S$
where $\rho$ is non-negative.
Consequently, for any interval $I \subseteq [-4, 4]$, the set
$$
\left\{
p: \bar{a}_{1, p} \in I
\right\}
$$
is expected to have  natural density
$$
\ds\int_{I}
\ds\int_{R(x_1)}
\frac{1}{4 \pi^2}
\sqrt{\max\{\rho(x_1, x_2), 0\}} \; d x_2 \; d x_1.
$$
(For details, see the original source, specifically  \cite[p.~21 and p.~40]{FiKeRoSu}.) 
Let
$$
\Psi(x) = \frac{1}{4 \pi^2} 
\ds\int_{R(x)}
\sqrt{
\max\{\rho(x, x_2), 0\}
}
\, d x_2.
$$
In particular,
$R(0) = [-2, 2]$ and
$$
\Psi(0) = 
\frac{1}{4 \pi^2} \ds\int_{-2}^2 \sqrt{(8 - 4 x_2) (x_2 + 2)^2} \; d x_2
=
\frac{64}{15 \pi^2}.
$$
In our notation $\Phi(x)=\Psi(4x)\cdot 4$, since one can rescale the variable and account for the fact that both functions are assumed to have integral 1. Therefore, $\Phi(0)=\Psi(0) \cdot 4=\frac{256}{15 \pi^2}$.
}}
\end{remark}

\begin{remark}\label{example}
{\emph{
For $g=2$, $t = \pm 1$, and $\End_{\overline{\Q}}(A) \simeq \Z$,
we have an equivalent reformulation of (\ref{precise-conj-bis}): 
 \[
                \pi_A(x,t) \sim
                \frac{128}{15\pi^2}
\cdot
\frac{m_A |C(m_A, t)|}{|G(m_A)|}
\cdot
\ds\prod_{\ell \nmid m_A}
\frac{
        \ell(\ell^6 - \ell^5 - \ell^4 + \ell + 1)
}{ (\ell-1)(\ell^2-1)(\ell^4-1)  }
\cdot 
\frac{\sqrt{x}}{\log x}.
        \]
        This is obtained by combining \eqref{eqn:g2clt} with the value of $\Phi(0)$ from the previous remark 
        and with the formula $$|\GSp_4(\Z/\ell\Z)| = \ell^4(\ell-1)(\ell^2-1)(\ell^4-1).$$
}}
\end{remark}

\begin{remark} 
{\emph{
                For higher $g$, the function $\Phi$ is shown to have a certain general form in Appendix \ref{SerreApp2}.  It may again be calculated explicitly 
using for example \cite[Theorem 7.8.B]{We} and 
\cite[5.0.4]{KS} 
(see also the upcoming \cite{BuFiKe}).
}}
\end{remark}

\begin{remark}
{\emph{
 For $g=1$, a more refined version of (\ref{LT-dim1}) was  proposed in \cite{BaJo};
for higher $g$, similar refinements  are relegated to future work.
}}
\end{remark}

\begin{remark}\label{generalST}
{\emph{
Variations of our Conjecture \ref{LT-dimg} may be 
formulated for non-generic classes of abelian varieties such as the case of a CM elliptic curve $E/\Q$
(which was already considered in \cite{LaTr});
in such cases, both the assumption on the image of $\rho_A$ and
the Equidistribution Assumption
must be modified 
appropriately. 
We relegate such endeavours to future work.
}}
 \end{remark}

\section{Computations}

The Lang-Trotter Conjecture as formulated in (\ref{precise-conj-1}) has been supported by numerical evidence (see \cite{LaTr}, \cite{CaHuJaJoScSm}, and \cite{CoFiInYi}). 
Among the main ensuing difficulties are the computations of the integer $m_A$ and 
of the quotient $\frac{m_A |C(m_A, t)|}{|G(m_A)|}$. These
may be resolved for $g=1$ by working with a {Serre curve}, i.e., an elliptic curve for which 
$\left|\GL_2\left(\hat{\Z}\right) : \Im \rho_A\right| = 2$. For such a curve,
the integer $m_A$ is the least common multiple of $2$ and the discriminant of $\Q\left(\sqrt{\Delta_A}\right)$,
where $\Delta_A$ is the discriminant  of any Weierstrass equation of $A$; see \cite[Section 4, p.~1558]{Jo}.
As proved in \cite{Jo} and later  in \cite{CoGrJo}, in more than one sense almost all elliptic curves are Serre curves.  Examples of such curves, as exhibited by Serre in \cite[pp.~310--311]{Se72} and by H. Daniels in 
\cite[p.~227]{Da}, have  been used for numerical computations in \cite{LaTr} and \cite{CoFiInYi}.

For higher $g$, the investigation of $m_A$ from a computational perspective is a solid problem in itself that remains to be tackled.
In this section, while we do not provide numerical evidence for Conjecture \ref{LT-dimg}, we do provide some computational data that complements our main theoretical results.

\subsection{Values of $\pi_A(x,t)$}

Figures \ref{fig:piA} and \ref{fig:piA1} show the values of
$\pi_{A}(x, t)$
 graphed versus $\sqrt{x}/\log x$ for $t \in \{0, 1\}$ and $A \in \{J_1, J_2, J_3\}$, where $J_1, J_2, J_3$   are the Jacobians of the hyperelliptic curves listed in Table \ref{table}. Prediction (\ref{precise-conj}) would  
 imply that these graphs approximate a straight line, whose slope is determined by the constant in front of $\sqrt{x}/\log x$; the graphs are indeed consistent with this implication.

\begin{table}[h]
\begin{center}
\begin{tabular}{c|c|c|c}
        Jacobian &hyperelliptic curve & genus & comments \\
\hline
$J_1$ & $y^2 = x^5 - x + 1$ & 2 & good reduction outside $\{ 2, 19, 151 \}$, $\End(J_1) \cong \Z$  \cite[p.509]{D} \\
$J_2$ & $y^2 = 4x^7 - 12x - 35$ & 3 & everywhere semistable, $\End(J_2) \cong \Z$  \cite[p.2]{Zar} \\

$J_3$ & $y^2 = 4x^9 - 8x - 39$ & 4 & everywhere semistable, $\End(J_3) \cong \Z$  \cite[p.2]{Zar} \\
\hline
\end{tabular}
\end{center}
\caption{Jacobians of hyperelliptic curves used in computations.}
\label{table}
\end{table}
\begin{figure}[h]
\begin{tikzpicture}
\pgfplotscreateplotcyclelist{my black white}{%
solid, every mark/.append style={solid, fill=gray}, mark=*\\%
solid, every mark/.append style={solid, fill=gray}, mark=square*\\%
solid, every mark/.append style={solid, fill=gray}, mark=triangle*\\%
}
\begin{axis}[
xticklabel style=
{anchor=near xticklabel},
title={\;},
xlabel={$\sqrt{x}/\log x$},
y tick label style={/pgf/number format/1000 sep=},
extra y tick style={grid=major, tick label style={xshift=-1cm}},
ylabel={$\pi_A(x,0)$}, legend pos=north west,
cycle list name=my black white
]
\addplot table[x=date,y=value] {curve1.0.dat};
\addplot table[x=date,y=value] {curve4.0.dat};
\addplot table[x=date,y=value] {curve6.0.dat};
\addlegendentry{$J_1$}
\addlegendentry{$J_2$}
\addlegendentry{$J_3$}
\end{axis}
\end{tikzpicture}
\caption{Values of $\pi_A(x,0)$ versus $\sqrt{x}/\log x$ for various Jacobians of hyperelliptic curves.}
\label{fig:piA}
\end{figure}

\begin{figure}[h]
\begin{tikzpicture}
\pgfplotscreateplotcyclelist{my black white}{%
solid, every mark/.append style={solid, fill=gray}, mark=*\\%
solid, every mark/.append style={solid, fill=gray}, mark=square*\\%
solid, every mark/.append style={solid, fill=gray}, mark=triangle*\\%
}
\begin{axis}[
xticklabel style=
{anchor=near xticklabel},
title={\;},
xlabel={$\sqrt{x}/\log x$},
y tick label style={/pgf/number format/1000 sep=},
extra y tick style={grid=major, tick label style={xshift=-1cm}},
ylabel={$\pi_A(x,1)$}, legend pos=north west,
cycle list name=my black white
]
\addplot table[x=date,y=value] {curve1.1.dat};
\addplot table[x=date,y=value] {curve4.1.dat};
\addplot table[x=date,y=value] {curve6.1.dat};
\addlegendentry{$J_1$}
\addlegendentry{$J_2$}
\addlegendentry{$J_3$}
\end{axis}
\end{tikzpicture}
\caption{Values of $\pi_A(x,1)$ versus $\sqrt{x}/\log x$ for various Jacobians of hyperelliptic curves.}
\label{fig:piA1}
\end{figure}

\subsection{Converging products of Lemma \ref{open-torsion-cond}}

In part (iii) of  Lemma \ref{open-torsion-cond}, we showed that the following infinite product converges for all 
integers $t$ and  all integers $g \geq 1$:
\[
        P_{g,t} :=  \ds\prod_{\ell }
\frac{
\ell 
\cdot
|
\{M \in \GSp_{2g}(\Z/\ell \Z) : \tr M \equiv t (\mod \ell)\}
|
}{ |\GSp_{2g}(\Z/\ell \Z)| }.
\]
The numerical value of this product depends on the genus $g$ and the primes dividing the trace $t$ (more precisely, the numerator of each factor depends only on whether or not $\ell \mid t$).  It is possible to compute its value for various $g$ and $t$. For example, when $g=2$ we can use the explicit formulae of Remark \ref{remark-g2tracecounts}.  In that case, for $t  \in \{0, 1\}$, numerical computations show that the products appear to quickly converge to
\[
        P_{2,0} \approx 1.3547\ldots, \quad
        P_{2,1} \approx 0.7988\ldots. \quad
\]

\subsection{The normal order of $\nu(a_{1,p})$} 

Figure \ref{fig:normalorder} shows the average number of 
 $\nu(a_{1,p})$ for $J_1$ of Table \ref{table}, for $p$ in the intervals $[2^{i-1}, 2^i]$,
  $i = 2, \ldots, 21$.  The graphs of $\log \log 2^i$ and $\log \log 2^{i-1}$ are shown for comparison.  Figure \ref{fig:normalorder1} presents histograms of the values of $\nu(a_{1,p})$ for $J_1$ in two intervals:  $[1, 2^{20}]$ and $[2^{20},2^{21}]$.  
 The corresponding histograms for $J_2$ and $J_3$ are very similar.

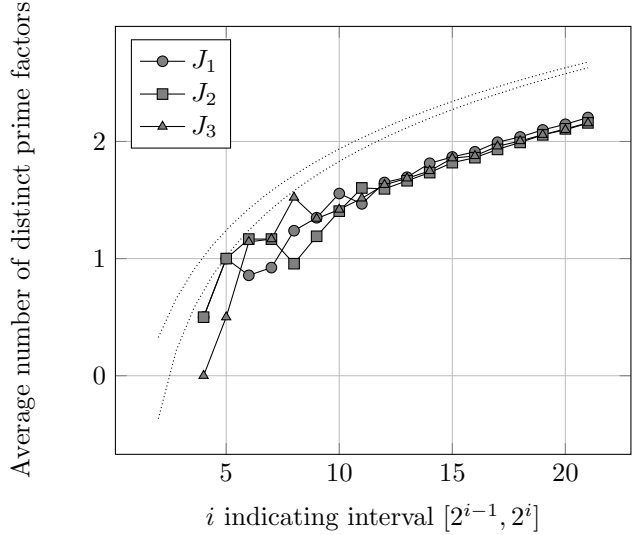
\begin{figure}[h]
\begin{tikzpicture}
\pgfplotscreateplotcyclelist{my black white}{%
solid, every mark/.append style={solid, fill=gray}, mark=*\\%
solid, every mark/.append style={solid, fill=gray}, mark=square*\\%
solid, every mark/.append style={solid, fill=gray}, mark=triangle*\\%
}
\begin{axis}[
xtick={5,10,15,20},
xticklabel style=
{anchor=near xticklabel},
title={\;},
xlabel={$i$ indicating interval $[2^{i-1}, 2^i]$},
y tick label style={/pgf/number format/1000 sep=},
extra y tick style={grid=major, tick label style={xshift=-1cm}},
ylabel={Average number of distinct prime factors}, legend pos=north west,
grid=both,
cycle list name=my black white,
]
\addplot table[x=power,y=value] {normalorder.dat};
\addplot table[x=power,y=value] {normalorder2.dat};
\addplot table[x=power,y=value] {normalorder3.dat};
\addlegendentry{$J_1$}
\addlegendentry{$J_2$}
\addlegendentry{$J_3$}
\addplot +[domain=2:21,no markers,densely dotted] {ln(ln(2^(x-1)))};
\addplot +[domain=2:21,no markers,densely dotted] {ln(ln(2^(x)))};
\end{axis}
\end{tikzpicture}
\caption{Average number of $ \nu(a_{1,p})$ for  $J_1$, $J_2$ and $J_3$, in the intervals $[2^{i-1}, 2^{i}]$, $i=2,\ldots, 21$.  For comparison, the graphs of $\log(\log(2^i))$ and $\log(\log(2^{i-1}))$ are shown in dotted lines.}
\label{fig:normalorder}
\end{figure}

\newpage


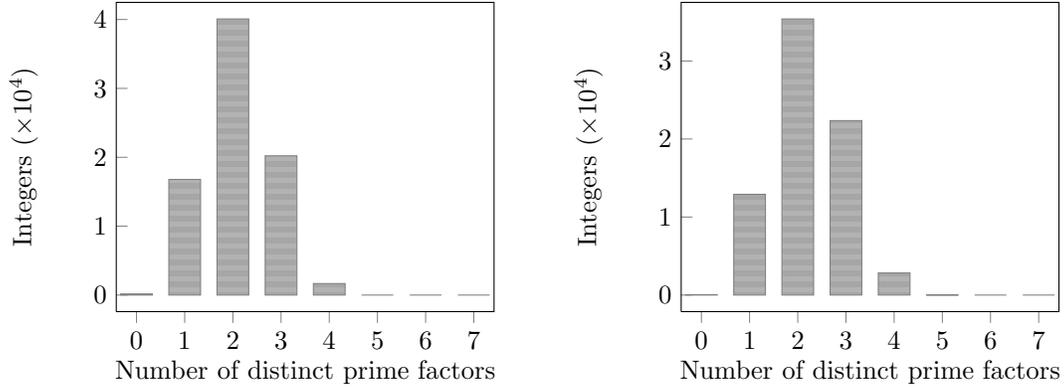
\begin{figure}[h]
 \begin{tikzpicture}
\begin{axis}[
     ybar,
    enlargelimits=0.06,
    width=0.4\textwidth,
    bar width=12pt,
    xtick={0,1,2,3,4,5,6,7},
    xtick pos=left,
    ytick pos=left,
xlabel={Number of distinct prime factors},
ylabel={Integers ($\times 10^4$)},
]
\addplot
[draw=gray,pattern=horizontal lines gray
]
coordinates
{ (0, 0.0166) (1, 1.6787) (2, 4.0083) (3, 2.0214) (4, 0.1673) (5,0) (6,0) (7,0)  };
\end{axis}
\end{tikzpicture}
\quad\quad
\begin{tikzpicture}
\begin{axis}[ybar,
    enlargelimits=0.06,
    width=0.4\textwidth,
    legend style={at={(0.5,-0.2)},
      anchor=north,legend columns=-1},
    bar width=12pt,
    xtick={0,1,2,3,4,5,6,7},
    xtick pos=left,
    ytick pos=left,
xlabel={Number of distinct prime factors},
ylabel={Integers ($\times 10^4$)},
]
\addplot
[draw=gray,pattern=horizontal lines gray
]
coordinates
{ (0, 0.0054) (1, 1.2915) (2, 3.5388) (3, 2.2358) (4, 0.2833) (5, 0.0009) (6, 0) (7, 0) };

\end{axis}
\end{tikzpicture}
\caption[]{Histograms of $\nu(a_{1,p})$ for $J_1$; on the left, the data is for primes $p < 2^{20}$, and on the right, the data is for primes $2^{20} < p < 2^{21}$.  Primes of bad reduction and primes for which the trace is zero are excluded.}
\label{fig:normalorder1}        
\end{figure}


\appendix
\section{Letter by J-P.~Serre on dimension of conjugacy classes in symplectic groups}
\label{SerreApp1}

\def\n{\noindent}
\def\Tr{\mathop{\mathrm{Tr}}}

\def\GSp{\mathop{\mathbf{GSp}}}
\def\Sp{\mathop{\mathbf{Sp}}}
\def\GL{\mathop{\mathbf{GL}}}
\def\SL{\mathop{\mathbf{SL}}}
\def\Z{\mathop{\mathbf{Z}}}
\def\R{\mathop{\mathbf{R}}}

\def\to{\rightarrow}

Paris, May 8, 2015

\bigskip
Dear professor  Cojocaru,

\bigskip

In case you want optimal estimates for the dimensions of conjugacy classes in  $\Sp$  and  $\GSp$,
here is what one can say:

\bigskip

  Let us consider the algebraic groups  $G =  \GSp_{2n}$  or  $\Sp_{2n}$ over a field  $k$  of characteristic $0$ (there are some small changes in char. $p > 0$). Assume $n > 1$, since the case of $\GL_2$ and $\SL_2$ is obvious. If  $g \in G(k)$, let   $d(g)$ be the dimension of the conjugacy class of  $g$, viewed as an algebraic subvariety of~$G$; we have $d(g) = \dim G - \dim Z_G(g)$,
  where  $Z_G(g)$ is the centralizer of  $g$  in  $G$.   
  
  \bigskip

  Theorem - {\it Assume that  $g$ is not of the form  $cu$, where  $c$  is in the center of $G$ and    $u$ is unipotent. Then  $d(g) \geqslant 4n-4$. If moreover  $\Tr(g) = 0$ and $n > 2$, we have
  $d(g) \geqslant 4n-2$.}
  
  \bigskip

  A few remarks before giving the proof:
  
  \medskip
 \n 1) This is a ``geometric'' statement: we may assume that the ground field is algebraically closed.\\
   2) We may assume that  $G = \Sp_{2n}$; the case of $\GSp_{2n}$ follows by writing  $g$ as product of a scalar and an element of  $\Sp_{2n}$; the dimension of the conjugacy class is the same.\\
  3) If  $\Tr(g) = 0$, then the condition `` $g \neq cu$ '' is satisfied, thanks to the fact that
  the characteristic does not divide $2n.$\\
   4) The bounds are optimal. One realizes them by using the obvious embedding  
   $\iota : \SL_2 \to G$, fixing a non-degenerate subspace of codimension 2. If one chooses
   $g = \iota(-1)$, the centralizer of $g$ in  $\Sp_{2n}$ is $\Sp_{2n-2} \times \SL_2$; its dimension is  
   $2(n-1)^2 + n-1 + 3 = 2n^2 -3n +4$; hence the dimension of the conjugacy class of $g$  is  $\dim G - (2n^2-3n+4)  = 2n^2 + n - (2n^2-3n+4) = 4n-4$. If one chooses $g = \iota(x)$, where  $x \in \SL_2$ is such that $\Tr(x) = 2-2n$ (this is always possible, and gives a non central element because $2-2n \neq \pm 2$), one gets 
   an element of trace 0 with centralizer the product of $\Sp_{2n-2} $ by a group of dimension 1; its dimension is  $2n^2 -3n +2$, and the dimension of its conjugacy class is
   $4n-2$.\\
   5) In the $\ell$-adic application, one needs the fact that, if  $g \in \GSp_{2n}(\Z_\ell)$, the dimension (as an $\ell$-adic manifold) of the conjugacy class of  $g$ is the same as its dimension
   in the sense of algebraic geometry.
   
    \n For a given  $t \in \Z$, with $t \neq \pm 2n$, one needs to choose $\ell$ so that no element of trace $t$ of $\GSp_{2n}(\Z_\ell)$ can be of the forbidden shape $cu$ ; as you explain in your paper, this is done by taking $\ell$ such that $v_\ell(t/2n) \neq 0$, which is 
     always possible.

   \bigskip
   
   \n {\it Proof of  $d(g) \geqslant  4n-4$.}
   
   \smallskip
   
     We may assume that  $g$ is semisimple. Indeed, if we decompose  $g$ in Jordan form, as
     $g = su = us$, where  $s$ is semisimple and $u$ is unipotent, the centralizer of  $g$  is contained in the centralizer of  $s$, hence  $d(g) \geqslant d(s)$.

     \smallskip 
     Let us decompose the vector space  $V = k^{2n}$  (with its chosen nondegenerate alternating form) as a direct sum of eigenspaces of $g$, say  $V = \oplus V_{\lambda}$. These spaces have the following properties:
     
   a)    $V_1$ and $V_{-1}$ are non degenerate, hence of even dimension;
   
     b)  If $\lambda \neq 1,-1$, then  $V_\lambda$ is totally isotropic, and in duality with  $V_{\lambda^{-1}}$. 
     
     Put  $n_\lambda = \dim V_\lambda$. The centralizer of $g$ is :
     
     \smallskip 
\quad \quad       $Z_G(g) = \Sp_{n_1} \times \ \Sp_{n_{-1}} \times \  \Pi' \GL_{n_\lambda},$

   \medskip
   
   \n where the symbol $ \Pi'$ means a product on a set $\Lambda$ such that
   $k^\times$ is the disjoint union of $\{1,-1\}, \Lambda$ and $\Lambda^{-1}$.
   
   \smallskip
   {\footnotesize [It might be more efficient to use the orthogonal decomposition of  $V$ given by the eigenspaces of  $g + g^{-1}$.]}
   
   \smallskip
 \n    This implies :
 
   \smallskip
     
  (1)  \quad  $ \dim Z_G(g) = \frac{1}{2}(n_1^2 + n_1 + n_{-1}^2+n_{-1}) + \Sigma' n_\lambda^2$,
  
    \smallskip
      
   \n    where  $\Sigma'$ means a summation over  $\lambda \in \Lambda$.

   \medskip 
 \n  We also have:
   
     \smallskip

(2)  \quad    $2n = n_1 + n_{-1} + 2 \Sigma' n_\lambda.$
   
     \smallskip

 \n   We now need to give an upper bound for the sum (1). Let us simplify the notation by putting 
   $ x = n_1/2, y=n_{-1}/2, z = \Sigma' n_\lambda$. Equation (2) becomes:
   
     \smallskip

   (2') \quad  $n = x + y + z$,
   
     \smallskip

 \n   and  equation (1)  implies (using  $(\Sigma' n_\lambda)^2 \geqslant \Sigma' n_\lambda^2$):
   
     \smallskip

   (3)  \quad $ \dim Z_G(g)  \leqslant  2x^2+x+2y^2+y+z^2$.
   
     \smallskip

\n Consider first the case $z=0$ (i.e. the case where  $g$ as order  2). In that case,
(1) shows that $ \dim Z_G(g)  = 2x^2+x+2y^2+y = 2n^2+n - 4xy$. We have  $xy \neq 0$, otherwise
$g$ would be central; hence  $x$ and $y$ run between  $1$ and $n-1$. In that range the product
$xy$ is minimum when either  $x$ or  $y$ is equal to  1, in which case its value is $n-1$.
Hence $ \dim Z_G(g)  \leqslant 2n^2 + n - 4(n-1)$, i.e. $d(g) \geqslant 4(n-1)$, as wanted.

\smallskip

Suppose $z \geqslant 1$; we have $x,y \geqslant 0$. With the relation
(2'), this shows that the point $(x,y,z) \in \R^3$ belongs to the triangle with vertices
the three points $(0,0,n), (0,n-1,1), (n-1,0,1).$  Since the function $2x^2+x+2y^2+y+z^2$ is convex, its attains its maximum at one of the vertices 
\cite[Chap.~II, \S 7.1, prop.~1]{Bour}; its values there are $n^2, 2n^2-3n+2, 2n^2-3n+2$.
Since $n^2 \leqslant 2n^2-3n+2$ for $n \geqslant 2$, this shows that  $ \dim Z_G(g) \leqslant 
2n^2-3n+2$, hence $d(g) \geqslant 4n-2$, and {\it a fortiori}  $d(g) \geqslant 4n-4$, as wanted.

\medskip

[The proof also shows that $d(g) = 4n-4$ is only possible when $g$  is an involution of type
$\pm \ \iota(-1)$, as in Remark 4.]
   
   \bigskip
   
   \n {\it Proof of  $d(g) \geqslant 4n-2$ when $\Tr(g)=0$.}
   
   \smallskip
   
   We use the same notation as in the above proof. The case $z=0$ is possible only if $n$ is even, with $x=y = n/2$. This gives a centralizer of dimension $n^2+n$, hence $d(g) = n^2$, which is $> 4n-2$ when $n \geqslant 4$. Hence $z \geqslant 1$, in which case the computation given above shows that $d(g) \geqslant 4n-2$.

   \bigskip
   
   \bigskip
   
   Best wishes
   
   \bigskip
   
\n    J-P. Serre

\section{Letter by J-P.~Serre on the continuity of the density function}
\label{SerreApp2}

\def\GSp{\mathop{\mathbf{GSp}}}
\def\Sp{\mathop{\mathbf{Sp}}}
\def\GL{\mathop{\mathbf{GL}}}
\def\SL{\mathop{\mathbf{SL}}}
\def\Z{\mathop{\mathbf{Z}}}
\def\R{\mathop{\mathbf{R}}}
\def\C{\mathop{\mathbf{C}}}
\def\USp{\mathop{\mathbf{USp}}}
\def\F{\mathop{\mathbf{F}}}

\def\eps{{\varepsilon}}
\def\l{{\lambda}}
\def\a{{\mathfrak a}}
\def\b{{\mathfrak b}}
\def\n{\noindent}
\def\D{\Delta}
\def\G{\Gamma}

Paris, April 12, 2015

\bigskip
  Dear Katz,
  
  \bigskip
  
  Thank you very much for your letter about the density problem for  $\USp(2n)$.
  
    \bigskip

After writing to you I found a different way of getting the same result, based on an integration formula which is a combination of H.Weyl's formula and a formula of Steinberg \cite[Lemma 8.2]{St}.
\smallskip
 
 Let me start with a simple simply connected group  $G$. Let $T$ be a maximal torus, and define the roots, weights, fundamental weights as usual. I need to number the fundamental weights:
 $\omega_1,..., \omega_n$, where  $n$ is the rank. Call $\chi_i$ the traces of the corresponding
 fundamental representations, and call $\psi_i$ their restrictions to $T$ (I am copying Steinberg's notations). Steinberg's formula is a formula relating the $n$-differential forms on $T$ given, on one hand by the exterior product of the $d\psi_i$, on the other hand by the exterior product of the $d\omega_i/\omega_i$ [invariant differential on the torus]. The formula is :
   
   $$ (1) \quad d\psi_1 \wedge ... \wedge d\psi_n = f. d\omega_1/\omega_1 \wedge\cdots \wedge d\omega_n/\omega_n, $$
   
 \n   where $f = \omega_0 \prod_{\alpha > 0} (1 - \alpha^{-1})$ and $\omega_0 = \prod  \omega_i$.
 
\n  [Note that, here, I am forced to use a multiplicative notation for the roots, since I view them as functions on  $T$.] 
 
 We may write  $f^2$ in a slightly simpler form:
 
 $$ (2) \quad f^2 = \prod_{\alpha > 0} (\alpha + \alpha^{-1} - 2).$$
 
 This shows that  $f^2$ is real and invariant by the Weyl group. It can thus be written as a polynomial
 in the $\chi_i$ ; let me call $D$ that polynomial (it is a kind of discriminant: it vanishes only on the singular elements of $G$). We thus have:
 
  $$ (3) \quad f^2 = D(\chi_1,...,\chi_n).$$
 
   This formula of Steinberg gives 
   an integration formula over any local field. Here I shall stick to $\R$ but I have no doubt that the $p$-adic case should be useful, too. To simplify matters, I shall suppose that -1 is in the Weyl group  $W$.
   
   Let now call $UG$ the compact form of $G$, and $UT$ the corresponding torus. The roots
   $\alpha$, and the characters $\omega_i$ are now viewed as functions  on $UT$ with complex values of absolute value 1. The $\chi_i$ are real valued functions (because of my assumption on the Weyl group); let me call them  $x_i$; they give a map $x: UG \to \R^n$ which is well-known (since Elie Cartan \cite[pp.~803--804]{Ca}) to have the following properties:
   
    a) It gives a homeomorphism of the space  $Cl(UG)$ onto a compact subset $C$ of $\R^n$.
    
    [When $G$ has type $G_2$, the set $C$ is the one I asked you to draw for me.]

    b) Let $C_T$ be the standard fundamental domain of $W$ (in the tangent space, it corresponds to the fundamental alcove); the map  $C_T \to C$ is a homeomorphism; the boundary of $C$ 
    corresponds to the singular classes. [I see that by using topological arguments.]
    
    c) The function $D(x_1,...,x_n)$ [where  $D$  is as above] is a polynomial whose restriction to $C$ is zero on the boundary and nowhere else.  
    
    \smallskip
    
    By combining this with H. Weyl's integration formula, one finds:
    
     \smallskip
    
    Theorem - {\it The image by  $x : UG \to \R^n$ of the normalized Haar measure of  $G$ has a continuous density} [with respect to the standard measure $dx_1\cdots dx_n$], {\it namely the function  $\varphi(x_1,...,x_n)$ which is equal to  $0$ outside $C$, and to   $ (2\pi)^{-n} |D(x_1,...,x_n)|^{1/2}$  on $C$.}
    
     \smallskip 
    
    Corollary - {\it The equidistribution measure associated with a fundamental character of $G$ has a continuous density.} 
    
    \smallskip
    
    More precisely, the density at a number  $c$  of the fundamental character $\chi_1$ is equal to  $\int \varphi(c,x_2,...,x_n) dx_2 \cdots dx_n.$
    
     \smallskip
    
    Curiously, this point of view does not seem to give the fact that such densities are real analytic outside a finite number of values (namely, those taken by the character at the points of finite order
    of  $G$  corresponding to the vertices of the alcove, i.e. the points of $G$ of order 1 or 2
    when $G = \Sp_{2n}$).
    
     \smallskip
   
 One can also say when the density is  not 0; for instance, for the trace when $G= \Sp_{2n}$
 the density is nonzero when the trace  $c$  is such that  $-2n < c < 2n$.
 
 \medskip
 
 When -1  is not in the Weyl group, some fundamental characters come in pairs of conjugate ones,
 and instead of  $\R^n$ one should take a product of copies of  $\R$ and $
 \C$. The case of $\SL_3$
 is especially nice; the compact  $C$ lies inside $\C$, and is the interior (+ boundary) of a ``hypocycloid with 3 cusps" (hypocyclo•de ˆ trois rebroussements - as I learned when preparing the ENS competition in 1944-45).

 \bigskip
 
 Best wishes
 
  \bigskip

 J-P. Serre
 
  \bigskip

  \n  PS - The explicit formula for  $\varphi$ in the case of  $\Sp_4$ is given in the paper of Fit\'{e}, Kedlaya and others \cite{FiKeRoSu}; see Table 5, last line. Note that their  $a_1$ is my $x_1$ and their $a_2$
is my $x_2+1$.

\end{document}